\newcommand{\Frac}[2] {\frac{\mbox{\normalsize{$#1$}}}{\mbox{\normalsize{$#2$}}}}
\newcommand{\dt}[2]{\displaystyle \frac{{\rm d} #1}{ {\rm d}  #2}}
\newcommand{\pequationdeb}{$$ \left\{ \begin{minipage}[c]{130mm}}
\newcommand{\pequationfin}{\end{minipage}
                           \right. $$}
\newcommand{\JVT}{\displaystyle J \left( \frac{V(t)}{T(t)} \right)}
\def \smb {{\scriptstyle \bullet }}
\newcommand{\monitem}{ \smallskip \noindent $\bullet$ \quad  } 
\newcommand{\sonitem}{ \smallskip \noindent \quad  $+$ \quad }
\newcommand{\moneq}{\vspace*{-6pt} \begin{equation} \displaystyle } 
\newcommand{\moneqstar}{\vspace*{-6pt} \begin{equation*} \displaystyle } 
\newcommand{\monendstar}{\vspace*{-6pt} \end{equation*}   }
\newcommand{\monend}{\vspace*{-6pt} \end{equation}   }
\newcommand{\beq}     {\begin{equation}}
\newcommand{\enq}     {\end{equation}}
\newcommand{\be}    {\begin{enumerate}}
\newcommand{\ee}    {\end{enumerate}}
\newcommand{\Bb}
\newcommand{\dd}{{\rm d}}
\def\br {\break}
\def\section*#1{}
\def\resume{\if@twocolumn
\section*{R\'esum\'e}
\else \small
\quotation{\bf \it R\'esum\'e \rule[1mm]{1.5mm}{0.2mm}\vspace{0pt}}
\fi}
\def\endresume{\if@twocolumn\else\endquotation\fi}
\def\abstract{\if@twocolumn
\noindent\section*{{\bf Abstract}}
\else \small
\quotation{\noindent \bf {Abstract.} \rule[1mm]{1.5mm}{0.2mm}\vspace{0pt}}
\fi}
\def\endabstract{\if@twocolumn\else\endquotation\fi}
\newtheorem{theorem}{Theorem}
\newtheorem{lemma}[theorem]{Lemma}
\newtheorem{proposition}[theorem]{Proposition}
\newtheorem{definition}[theorem]{Definition}
\begin{document} 
\title{\bf \LARGE   
Mathematical modeling of antigenicity    ~\\ ~ \vspace{-.8cm}  ~\\ 
  for HIV dynamics  ~\\~    }  

\author { { \large  Fran\c{c}ois Dubois~$^{ab}$, 
Herv\'e V.J. Le Meur~$^{b}$ and Claude Reiss~$^{c}$}     \\ ~\\
{\it \small  $^a$   Conservatoire National des Arts et M\'etiers, }  \\
{\it  \small  Department of Mathematics,  Paris, France.} \\  
{\it  \small $^b$   Department of Mathematics, University  Paris-Sud,} \\
{\it \small B\^at. 425, F-91405 Orsay Cedex, France.} \\ 
{\it  \small $^c$  Vigilent Technologies, 38160 Chevri{\`e}res, France.}  \\
{\it \small francois.dubois@math.u-psud.fr, herve.lemeur@math.u-psud.fr, 
claude.reiss@vigilentech.com  }    \\   ~\\    }  
\date{ 5  March   2010~\protect\footnote{Published in  {\it Maths In Action}, 
 volume 3, number 1, (2010) pages 1-35. Edition pr\'esente du 03 January 2011.}}
  
\maketitle

\begin{abstract} 
This contribution is devoted to a new model of HIV multiplication
motivated by the patent of one of the authors. We take into account
the antigenic diversity through what we define ``antigenicity'',
whether of the virus or of the adapted lymphocytes. We model
the interaction of the immune system and the viral
strains by two processes. On the one hand, the presence of a
given viral quasi-species generates antigenically adapted
lymphocytes. On the other hand, the lymphocytes kill only viruses for which
they have been designed. We consider also the mutation and
multiplication of the virus. An original infection term is derived.

\noindent
So as to compare our system of differential equations with
well-known models, we study some of them and compare their predictions
to ours in the reduced case of only one antigenicity. In this
particular case, our model does not yield any major qualitative
difference. We prove mathematically that, in this case, our model is
biologically consistent (positive fields) and has a unique continuous
solution for long time evolution. In conclusion, this model improves
the ability to simulate more advanced phases of the disease.             
 $ $ \\[5mm]
   {\bf Keywords}: HIV modeling, antigenic variation, mutation, immune response
 $ $ \\[5mm]
   {\bf AMS classification}: 34-99, 65-05, 65Z05, 92B99, 92C50. 
\end{abstract}

\bigskip \bigskip  \newpage \noindent {\bf \Large 1) \quad  Introduction}  
 
\smallskip  
Virus multiplication is at the basis of viral infection. Although the viral replication
cycle involved makes heavy use of the infected cell's resources \cite{Fujii,Warrilow}, the
enzyme(s) in charge of viral genome replication is(are) frequently encoded in the latter
\cite{Brass}. Compared to cellular polymerases, viral polymerases are usually more
error-prone \cite{Kashkina}. It follows that the viral mutation rate, defined as the
average number of base changes at a given position of the genome per replication cycle,
may be large compared to that observed in our own cells, which is about one in a billion
\cite{Petravic}. For certain viruses, in particular retroviruses like HIV-1, it could be
up to 1,000,000 times greater. By the end of 2007, the Los Alamos HIV data base listed
over 230,000 different viral sequences (see www.hiv.lanl.gov). Given the small genome size
of these viruses, chances are then that each member of a viral progeny carries
mutations. A single point mutation may be enough to simultaneously affect two genes
encoded in different, but overlapping reading frames, whilst silent mutations, which do
not change amino acid coding, may nevertheless have important biological consequences
\cite{Mueller}.

A virus is mainly characterized by its ability to infect target cells (infectivity) and by
its antigenic signature (antigenicity), defined as both the capacity to induce an immune
response and also its strength and type. Immunogenicity is the ability of antigens to
elicit a response from cells of the immune system. Mutations during virus replication may
therefore release infective or non-infective viruses, of the same or of different
antigenicity. For HIV-1, the ratio of infectious to non-infectious particles is estimated
to range from 1:1 to 1:60,000, depending on the type of cell infected and the viral strain
\cite{Thomas}. Whether the virus is infective or not, over 800 mutations affecting HIV-1
antigenicity were identified in its envelop gene (env) alone \cite{Kothe}.

By encoding its own replication enzymes, the virus has control over its replication
fidelity and thereby challenges heavily the immune system, due to the huge burden imposed
by the number of infective virions produced and their antigenic diversity. This burden is
even worse when the virus targets part of the immune system (CD4 displaying cells), as is
the case for HIV-1. In addition, the immune cell proliferation induced by the viral attack
will provide HIV-1 virions with new targets, engaging the cell-virus dynamics in an
exponentially soaring extension regime.

Kinetic modeling is therefore of high interest for understanding the course of
infection. It is a prerequisite for designing and optimizing treatment strategies based on
antiviral drugs. A large number of deterministic and stochastic inter- and intra-cellular
models of HIV dynamics have already been proposed
(\cite{Finzi,Nowak-May,Pastore_Zubelli,perelson02,perelson99mathematical,Snedecor03,Tuckwell}
among others), but none enable the prediction of the course of the disease in all its
phases. For instance, even if, following antiviral treatment, the plasma load of the virus
becomes undetectable, unscheduled bursts occur, probably fed by viral sanctuaries
disseminated in various tissues and organs (lymph or neuronal tissues, gastro-intestinal
or uro-genital tracts etc. See \cite{Recher}). One may assume that, despite
tissue-specific kinetic diversities, the course of infection in all sanctuaries (including
plasma) obeys a common, complex host-predator relation, differing only by
sanctuary-specific parameters. The course of the global infection would then be the result
of all local processes. This result would however not be a simple addition or
superposition, as it is likely that each local process would provide viruses having
locally-specified antigenicities and infectivities which may challenge the immune cells in
the same or other sanctuaries.

What is the infection phenomenology ? As far as our study is concerned, the process
involves four major participants or ``fields'': uninfected T lymphocytes (denoted $T$),
infected ones ($U$), infectious viruses ($V$) and non-infectious viruses ($W$). For each
participant, a characteristic antigenicity is recognized by a lymphocyte, or displayed by
a virus. In the following, we call {\it antigenicity} the variable associated to this
biomolecular characteristic and denote it by the index $j$. Modeling this antigenicity can
also be found in
\cite{Althaus_Boer,Frid_Jabin_Perthame,Nowak_Bangham,Nowak-May,Pastore_Zubelli}. Such
models are {\em microscopic} since they take into account a microscopic quantity that may
not be easily measured biologicaly. Macroscopic models only use the sum of all the $T_j$
and $V_j$. This modeling is necessary, since viruses devoid of antigenic diversity would
be eliminated by the immune system. Actually, viral clearance is not observed and
furthermore, it is the immune system that will ultimately collapse. So as to quantify this
{\it biological reality}, we extend the classical biological definition by assuming we
characterize the biomolecular viruses. Mainly our antigenicity is not linked to any
virulence since we consider only infectious or non-infectious viruses and no intermediate
state.

The evolution with time of $T_j$ depends on three phenomena: regression of the $T_j$
population due to the viral attack (whatever the antigenic pedigree of the infecting
virus), which transforms $T_j$'s into $U_j$'s; the stimulation of the immune system by
both infectious and noninfectious viruses of antigenicity $j$; the natural fate of $T_j$
independently of the viral presence, {\it i.e.} spontaneous generation and death of $T_j$
species. The $T_j$ population may include cross-reacting species, which are active also
against a (limited) number of targets with different antigenicities. The lymphocytes $U_j$
are derived only from the $T_j$ population following viral attack. It is assumed also,
that the switch from the $T_j$ to the $U_j$ state occurs only upon viral infection. Like
the $T_j$, the $U_j$ are subject to natural death. Viruses $V_j$ and $W_j$ are generated
through infection of any susceptible T cell, whatever its antigenicity. The parent virus
of $V_j$ and $W_j$ may be a $V_j$ (no viral antigenicity modification) or a $V_k$ of
different antigenicity (viral antigenicity mutation). Viruses $V_j$ and $W_j$ will be the
target of lymphocytes $T_j$ exclusively. Both viral species have a natural death
rate. Viruses $V_j$ and $W_j$ differ by mutations in genes involved in infectivity, but
not affecting antigenicity~$j$.

This modelling assumes that viral genome parts responsible for infectivity may differ from
those responsible for antigenicity. Since the viral strategy is to escape the immune
response whilst minimizing loss of infectivity, excess mutations in the
antigenicity-specifying part of the viral genome would be more favorable. The immune
system senses mainly the viral surface, hence mutations in the viral envelop genes would
be most beneficial, but they should not, or marginally only, affect viral genome parts
responsible for infectivity. HIV-1 handles in part this dilemma by introducing mutational
hot-spots in its genome (\cite{Ji} and the website {http://www.hiv.lanl.gov}), mainly
in the envelop genes, where the mutation rate is much higher due to local sequence and
structural particularities of the genome \cite{Bebenek,Kothe}.

What is the therapeutic motivation ?

Approved drugs for AIDS treatment are of three kinds mainly. Two of them inhibit reverse
transcription (using nucleotide and non-nucleotide analogs) and the third inhibits a viral
enzyme in charge of cleaving reverse transcriptase from a precursor protein. Because of
its high mutation rate, HIV rapidly develops resistance to any one of these drugs taken
individually. Resistance can be considerably delayed by using various combinations of
these drugs (multitherapies). So far however, no combination has been found that could
clear the virus. Therefore therapies are life-long and unfortunately have considerable
side-effects.

Obviously, the high mutation rate of reverse transcriptase is central to the successful
viral strategy. It allows the virus to escape the circulating immune cells (antigenic
mutations) and to develop drug resistance, although over 90\% of its progeny lacks
infectivity and will therefore be rapidly cleared. The natural viral mutation rate is at
the limit of the ``error threshold'' \cite{Biebricher}, as a slight increase would produce
100\% non-infectious viruses. Conversely, reducing the mutation rate would reduce the
antigenic diversity and allow the immune system to eliminate the stabilized viral strains,
and drug resistance would vanish.

New AIDS therapy at stake?

A promising therapeutic approach would then be to take control of the
viral mutation rate. This was shown to be feasible in \cite{Derrien} with
a CNRS-filed patent based on this work USPTO 6,727,059, but also in
\cite{Drosopoulos,Harris,Murakami}, by supplying the reverse transcriptase with nucleotide
analogs. Some of them relax while others reinforce the replication
fidelity, without blocking reverse transcription. For a review see \cite{Anderson}.

Both therapeutic strategies would give rise to specific viral dynamics. These need to be
understood and assessed in detail. The medical decision to choose one particular strategy
and setting up the adapted drug regimen for a patient, given his viral load and lymphocyte
count (dose and extent of treatment, time expected to reach viral clearance etc) needs
careful analysis. Simulation of viral dynamics with a drug regimen could help in reaching
this decision. To this end, the present work is to build a realistic mathematical model of
viral dynamics.

In the following we review some well-known models by giving an analysis of the stationary
solutions (fixed points) and their stability in Section 2. In Section 3, we derive our new
model precisely and Section 4 is devoted to a full study of its mathematical properties in
a reduced case (only one antigenicity). We discuss this new model in Section 5 and
conclude in Section 6.

\bigskip \bigskip  \newpage \noindent {\bf \Large 2) \quad  Some popular models}

\smallskip 
Throughout this section, we review some well-known models. Some of
them take specific biological reality into account. So as to come to a
common description with our model presented later, we reduced them in
a preliminary step when needed. When used, the fields $T, U, V, W$
have the same meaning as above. When the model has a term identical to
ours, we denote the parameter of the term as ours. When it is
different, we denote it the same way as the authors and add a
subscript depending on the authors. Also, we use the very same values
of common parameters to have comparable results. We take most values
from Snedecor \cite{Snedecor03} and check these values with other
articles (\cite{Herz,Nelson,Perelson},~...).

All the fields of the models are non-dimensionnalized by using the
value of the non-infected lymphocytes at health (no virus and long
time) as a characteristic value both for the lymphocytes (infected or
not) and the virus.

For every model, we look for fixed points and study their
stability. A fixed point of the model is supposed to represent a
biological state lasting. In most articles, a fixed point is even
considered as the state during the second phase where the viremia
increases slowly but is not constant yet. The time scales are not made
precise and so it is not so contradictory.

Throughout the article, we call health the state where there is no
virus and only uninfected lymphocytes. In addition, we call
seropositivity the state where viruses coexist with lymphocytes ($V
\neq 0$). Notice that the link of our denomination with what is
usually called ``seropositivity'' is not straightforward. We also
define a seropositivity fixed point to be admissible if the fields are
positive.

We gather here some mathematical study of well-known
models. These results are already known thoughout the
literature. Indeed one may find in the article of de Leenheer and
Smith \cite{Leenheer_Smith}, and an extension of Wang and Li
\cite{Wang_Li}, a very elegant way to study some of these models. In
these articles, the authors use an abstract characterization of the 3D
systems of ordinary differential equations that enable to have in a very elegant way the nature of
the fixed points, limit cycles and stability thanks to general results
on ordinary differential equations. They crucially use the decoupling in their 3D
analysis that {\em a priori} cannot apply to any fully 4D system.

A thorough study of numerous models is also done in
\cite{Callaway_Perelson}. In this article, the authors point out that
the fixed point viremia ($V^*$) is too dependent on the drug efficacy
and that intermediate levels of virus are too low to be realistic
($10^{-10}$ and so). They try numerous modifications, most of which do
not improve the behavior. But they provide some compartiment-like
models that do not support those two critics.

\bigskip     \noindent {\bf \large 2-1   \quad  Perelson's model}  

\smallskip  
In the review \cite{perelson02} (and numerous other papers with
various coauthors), A. Perelson proposes a dynamical system to
describe the interaction of HIV virus with CD4 lymphocytes. The model
uses four fields that we will denote with our notations to ease
comparisons: uninfected lymphocytes ($T$), infected lymphocytes ($U$),
infectious {\em free} viruses ($V$) and uninfectious {\em free}
viruses ($W$). After non-dimensionnalizing with respect to the amount
of lymphocytes in a safe body (and a given volume), the system reads:
\moneq  \label{syst_perelson}
\left\{
\begin{array}{rcl}
\Frac{{\rm d}T}{{\rm d}t} & = & \beta (1-T) -\delta_P V T \\[2mm]
\Frac{{\rm d}U}{{\rm d}t} & = & \delta_P V T - \alpha U \\[2mm]
\Frac{{\rm d}V}{{\rm d}t} & = & a \theta U - \sigma_P V \\[2mm]
\Frac{{\rm d}W}{{\rm d}t} & = & a (1-\theta) U - \sigma_P W
\end{array} \right. \monend 
In this system, when the parameters used by Perelson appear in terms
that do not appear in our model, they are denoted with his notation
with an index $P$. This model is already studied for
instance in an article of Nowak and Bangham of 1996
\cite{Nowak_Bangham}.

  \bigskip   \noindent {\bf  2-1-1   \quad  Fixed points}

One may state the following theorem concerning the fixed points of
(\ref{syst_perelson}).
\begin{theorem} \label{th_perelson_1}
There exists only two fixed points to system (\ref{syst_perelson}).
The first one is ``health'': $(T^*,U^*,V^*,W^*)=(1,0,0,0)$. The
second one (seropositivity) reads:
\moneq \label{eq1}
T^*= \frac{\alpha \sigma_P}{a \delta_P \theta}, \quad V^* = 
\frac{\beta}{\delta_P}\left(\frac{1}{T^*}-1\right), \quad
W^* = \frac{1-\theta}{ \theta} V^*, \quad U^* = \frac{\sigma_P}{a \theta} V^*. 
\monend 
The seropositivity is admissible ($V \geq 0$) under the condition that
\moneq \label{eq2}
a \delta_P \theta -\alpha \sigma_P >0.
\monend  
\end{theorem}
 The proof of this theorem is easy and left to the reader.

   \bigskip  \noindent {\bf  2-1-2   \quad   Stability of fixed points}

So as to evaluate the local stability of fixed points, one must compute the 
Jacobian matrix ${\rm d}F$ of the right hand side:
\moneq  \label{eq3}
{\rm d}F(T^*,U^*,V^*,W^*)= \left(\begin{array}{cccc}
-\beta-\delta_P V^* & 0 & -\delta_P T^* & 0 \\
\delta_P V^* & -\alpha & \delta_P T^* & 0 \\
0 & a \theta & -\sigma_P & 0 \\
0 & a(1-\theta) & 0 & -\sigma_P
\end{array}\right). \monend %
In the case of ``health'', $(T^*,U^*,V^*,W^*)=(1,0,0,0)$ it looks:
\moneq  \label{eq4}
{\rm d}F(1,0,0,0)= \left(\begin{array}{cccc}
-\beta & 0 & -\delta_P & 0 \\
0 & -\alpha & \delta_P & 0 \\
0 & a \theta & -\sigma_P & 0 \\
0 & a(1-\theta) & 0 & -\sigma_P
\end{array}\right). \monend %
 It enables to state a theorem of conditional stability for "health".
\begin{theorem}
\label{th_perelson_2}
Health as a fixed point is stable if and only if 
\moneq  \label{eq5}
a \delta_P \theta - \alpha \sigma_P <0.  
\monend  \end{theorem}

\noindent
The proof is very simple and left to the reader. The eigenvalues are
real and take the values $-\beta, -\sigma_P$, $1/2(-\alpha -\sigma_P
\pm \sqrt{(\alpha-\sigma_P)^2+4a \delta_P \theta}))$. The
admissibility of the unstable direction can also be checked.\\

\noindent
In the case seropositivity may occur, one may state the following theorem.
\begin{theorem}
\label{th_perelson_3}
Under the admissibility assumption
(\ref{eq2}), the seropositivity fixed point (\ref{eq1}) is stable.
\end{theorem}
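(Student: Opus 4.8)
The plan is to show that every eigenvalue of the Jacobian (\ref{eq3}), evaluated at the seropositivity point (\ref{eq1}), has negative real part. The first simplification I would exploit is the block structure of $J$: its fourth column is $(0,0,0,-\sigma_P)^{\mathrm T}$, so $-\sigma_P<0$ is automatically an eigenvalue and the analysis reduces to the $3\times3$ upper-left submatrix
$$A=\begin{pmatrix} -\beta-\delta_P V^* & 0 & -\delta_P T^* \\ \delta_P V^* & -\alpha & \delta_P T^* \\ 0 & a\theta & -\sigma_P \end{pmatrix}.$$
It therefore suffices to prove that the three eigenvalues of $A$ lie in the left half-plane, for which I would invoke the Routh--Hurwitz criterion applied to the characteristic polynomial $\lambda^3+a_2\lambda^2+a_1\lambda+a_0$ of $A$: all roots have negative real part if and only if $a_2>0$, $a_0>0$ and $a_2a_1>a_0$.

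Next I would compute the three coefficients and, crucially, simplify them using the defining relation of the fixed point, namely $a\delta_P\theta\,T^*=\alpha\sigma_P$, read off directly from (\ref{eq1}). Writing $p:=\beta+\delta_P V^*$, one finds $a_2=-\operatorname{tr}A=p+\alpha+\sigma_P>0$ immediately. For the constant term, expanding $\det A$ and using $a\delta_P\theta\,T^*=\alpha\sigma_P$ together with $p-\delta_P V^*=\beta$ collapses the expression to $\det A=-\alpha\sigma_P\,\delta_P V^*$, so $a_0=-\det A=\alpha\sigma_P\,\delta_P V^*$, which is strictly positive precisely because the admissibility hypothesis (\ref{eq2}) forces $V^*>0$. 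For the linear coefficient $a_1$ (the sum of the principal $2\times2$ minors) the same relation makes the $\{2,3\}$ minor vanish, leaving $a_1=p(\alpha+\sigma_P)$.

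The remaining and only delicate point is the third Routh--Hurwitz inequality $a_2a_1>a_0$, i.e.
$$(p+\alpha+\sigma_P)\,p\,(\alpha+\sigma_P)>\alpha\sigma_P\,\delta_P V^*.$$
Here I would use $\delta_P V^*=p-\beta<p$ to bound the right-hand side by $\alpha\sigma_P\,p$, after which it suffices to check $(\alpha+\sigma_P)(p+\alpha+\sigma_P)>\alpha\sigma_P$; this is obvious since $(\alpha+\sigma_P)^2=\alpha^2+\sigma_P^2+2\alpha\sigma_P>\alpha\sigma_P$ and $p>0$. All three Routh--Hurwitz conditions thus hold, so the eigenvalues of $A$, together with $-\sigma_P$, have negative real part and the seropositivity point is (locally asymptotically) stable. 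I expect the algebraic simplification of $\det A$ and of the principal minors via the fixed-point identity to be the main bookkeeping obstacle, while the final inequality is robust and leaves ample slack.
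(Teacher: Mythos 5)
Your proposal is correct and follows essentially the same route as the paper: factor out the eigenvalue $-\sigma_P$ from the decoupled $W$-column, reduce to the upper-left $3\times 3$ block, and apply the Routh--Hurwitz criterion to its characteristic cubic, whose coefficients --- after substituting $p=\beta+\delta_P V^{*}=\beta a\delta_P\theta/(\alpha\sigma_P)$ via the fixed-point identity $a\delta_P\theta\,T^{*}=\alpha\sigma_P$ --- coincide exactly with the paper's $b_1,b_2,b_3$ in (\ref{eq6}). Your verification of $a_2a_1>a_0$ through the bound $\delta_P V^{*}<p$ together with $(\alpha+\sigma_P)^2>\alpha\sigma_P$ is merely a cleaner, fully explicit version of the paper's remark that the single negative term in $\Delta_2$ of (\ref{eq7}) ``is compensated by one of the expanded terms.''
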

%
 
\noindent $\bullet$ \quad  Proof of Theorem 3.  

\noindent Apart from $\lambda = -\sigma_P$, the eigenvalues satisfy:
\moneqstar  
\left| \begin{array}{ccc}
\lambda + \beta + \delta_P V^* & 0 & \delta_P T^* \\
- \delta_P V^* & \lambda + \alpha & - \delta_P T^* \\
0 & -a \theta & \lambda + \sigma_P
\end{array}
\right| = 0, 
\monendstar
or $\lambda^3+b_1 \lambda^2+b_2\lambda+b_3=0$ with
\moneq \label{eq6} 
b_1=\frac{1}{\alpha \sigma_P}(\beta a \delta_P\theta+\alpha \sigma_P(\alpha+\sigma_P)), 
\; b_2=\frac{\beta a \delta_P \theta}{\alpha \sigma_P}(\alpha +\sigma_P), \; 
b_3=\beta(a\delta_P \theta - \alpha \sigma_P).
\monend  
We need to use the Routh-Hurwitz Criterion (\cite{Henrici} p. 490)
that gives necessary and sufficient conditions ensuring that the roots
of the cubic polynomial have positive real parts. In our case, this
criterion reads:
\moneqstar   
\Delta_1=b_1 >0, \quad \Delta_2=\left|\begin{array}{cc} b_1 & 1\\ b_3 & b_2\end{array}\right| >0, \quad  
\Delta_3=\left|\begin{array}{ccc}
b_1 & 1   & 0 \\
b_3 & b_2 & b_1 \\
0   & 0   & b_3\end{array}
\right|>0. 
\monendstar
The first condition is obviously satisfied. Thanks to condition
(\ref{eq2}), the third condition is equivalent to the second one. It happens
that $\Delta_2$ can be computed:
\moneq \label{eq7}
\Delta_2 =\frac{1}{\alpha^2 \sigma_P^2} (\beta a \delta_P \theta 
+ \alpha \sigma_P(\alpha +\sigma_P))(a \beta \delta_P \theta (\alpha + \sigma_P)
+\alpha^2 \sigma_P^2)-\beta(a \delta_P \theta -\alpha \sigma_P),
\monend  
and this term is obviously positive because the only negative term is
compensated by one of the expanded terms.
\hfill $ \square $

  \bigskip     \noindent {\bf  2-1-3   \quad   Some numerical simulations}

 So as to have simulations comparable with
the other models studied in the present article, we
use the same values of parameters as previously: 
\moneqstar
 \beta = 0.01\mbox{
day}^{-1} \,,\quad \alpha = 0.7 \mbox{ day}^{-1}\,,\quad a = 250\mbox{
day}^{-1}.
\monendstar  
Moreover, some other parameters were the same as in the
Snedecor's model and we used the same values:
\moneqstar
\delta_P = 0.0125\mbox{ day}^{-1} \,,\quad \sigma_P = 2\mbox{ day}^{-1}.
\monendstar  
So as to be in the regime of health stable, we used
$\theta = 0.1$. As can be checked on
Figure~1, 
health is stable, although it needs
numerous days to come back to health. The qualitative evolution of
the lymphocytes and viruses
is comparable to the one of Snedecor's Model in
Figure~4. 
The uninfected lymphocytes decrease to
their minimum value (0.999625) in about six days. Then some hundreds
of days are needed to get sufficiently close to 1.

\bigskip  \centerline {
{\includegraphics[width=.49 \textwidth]{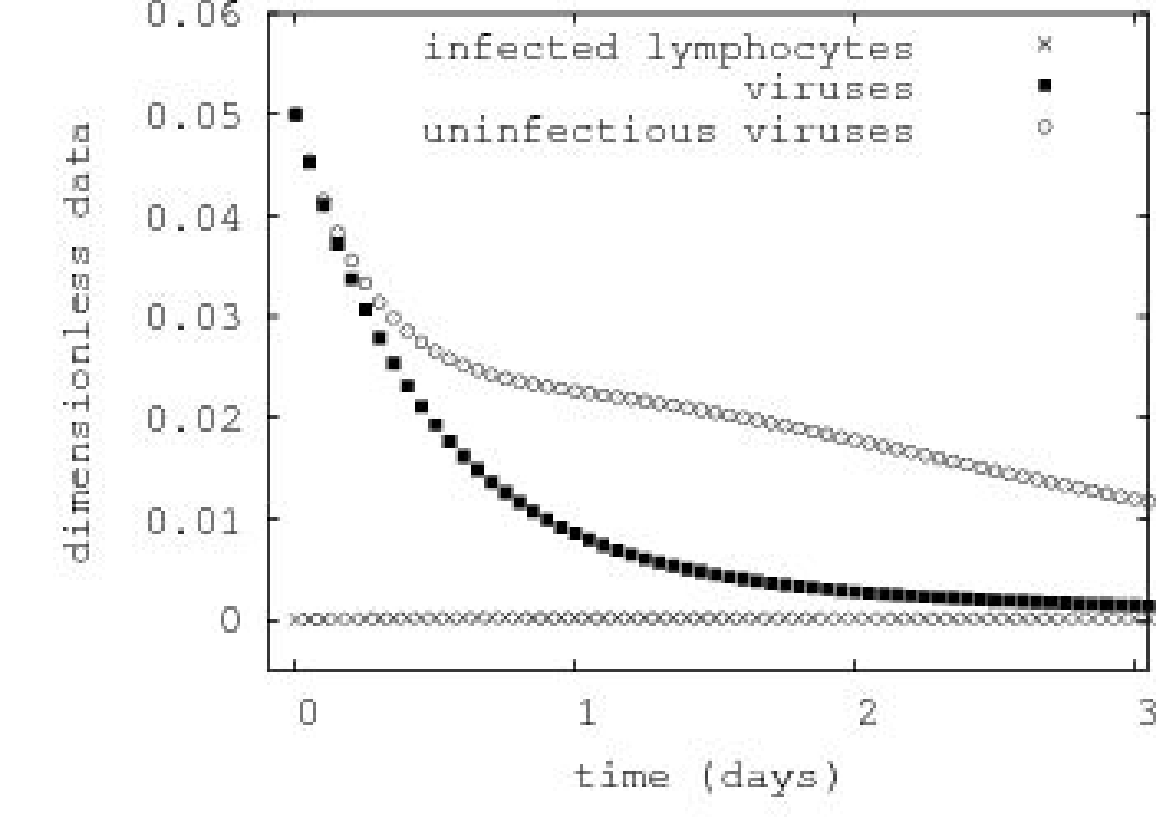}}  
{\includegraphics[width=.49 \textwidth]{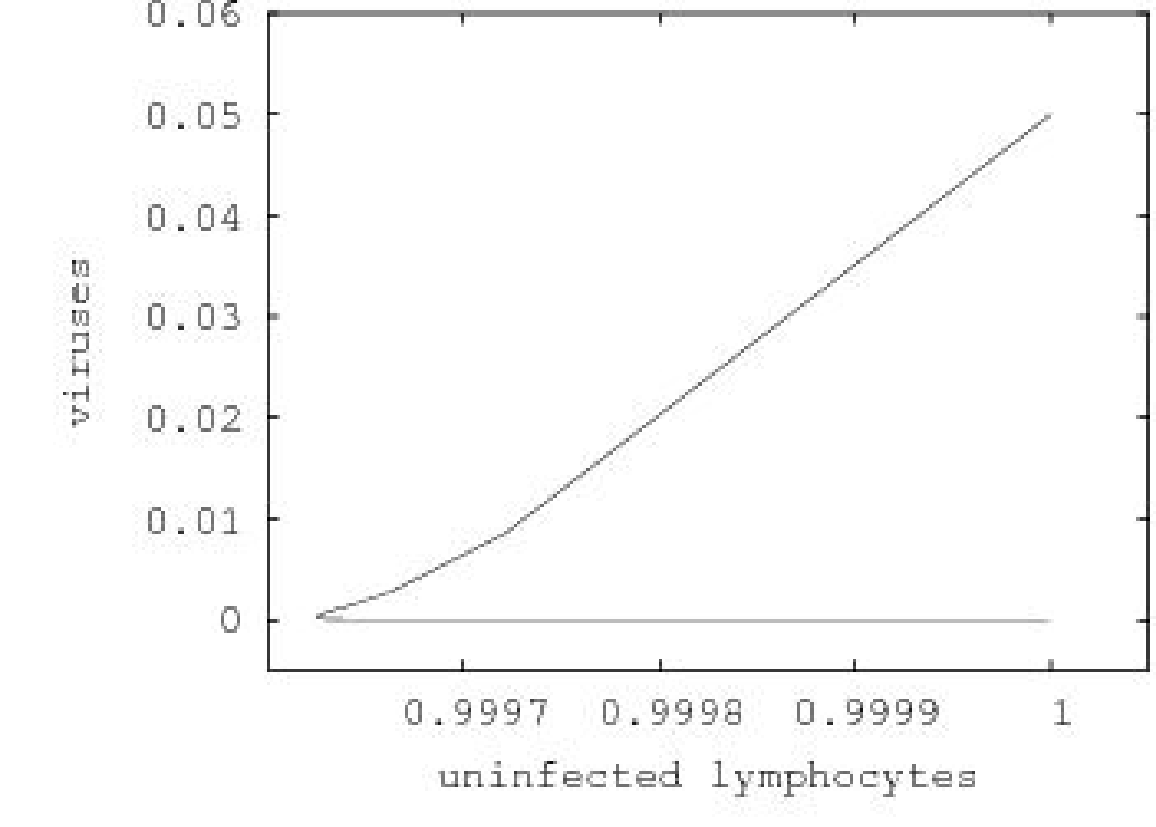}}} 
\hangindent=7mm \hangafter=1 \noindent {\bf Figure 1}. $\,$  
{\it Numerical results for Perelson's model with four equations.
Case where the health is stable. Short time evolution on the left (3 days) 
and phase plane for
lymphocytes and viruses on the right for a 600 days evolution.   }
\bigskip  

\noindent 
So as to have a stable seropositive state, we only change
$\theta$ for $\theta = 0.6$. Results can be seen on
Figure~2. 
The global dynamics converges to the
seropositivity with oscillations that could be a model of the
`blips'. The maximum value of the free virus (about 2.2) is reached at
day 39. Then the uninfected lymphocytes decrease until 0.599 at day
58. The minimum value of virus (0.019525) happens on the 116th day and
the cycle goes on as shown in Figure~2  right. 
Such oscillation are sometimes interpreted as the ``blips'' (sudden bursts
of viremia).

\noindent 
Notice that A.S. Perelson has discussed this model by proposing other
terms for the infection (equation (10) in \cite{Boer_Perelson}), the
immune system generation of T cells and of effectors, with and without
saturation \cite{Boer_Perelson}.

\bigskip  \centerline {
{\includegraphics[width=.49 \textwidth]{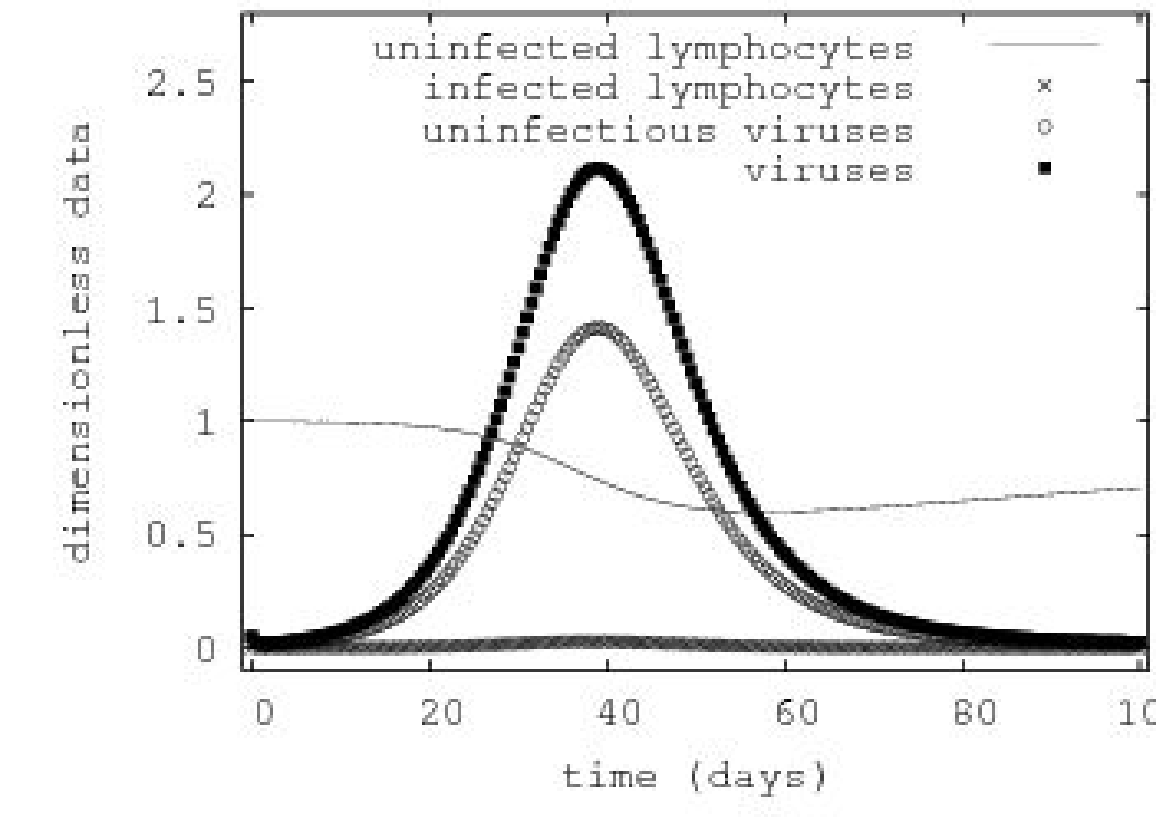}}  
{\includegraphics[width=.49 \textwidth]{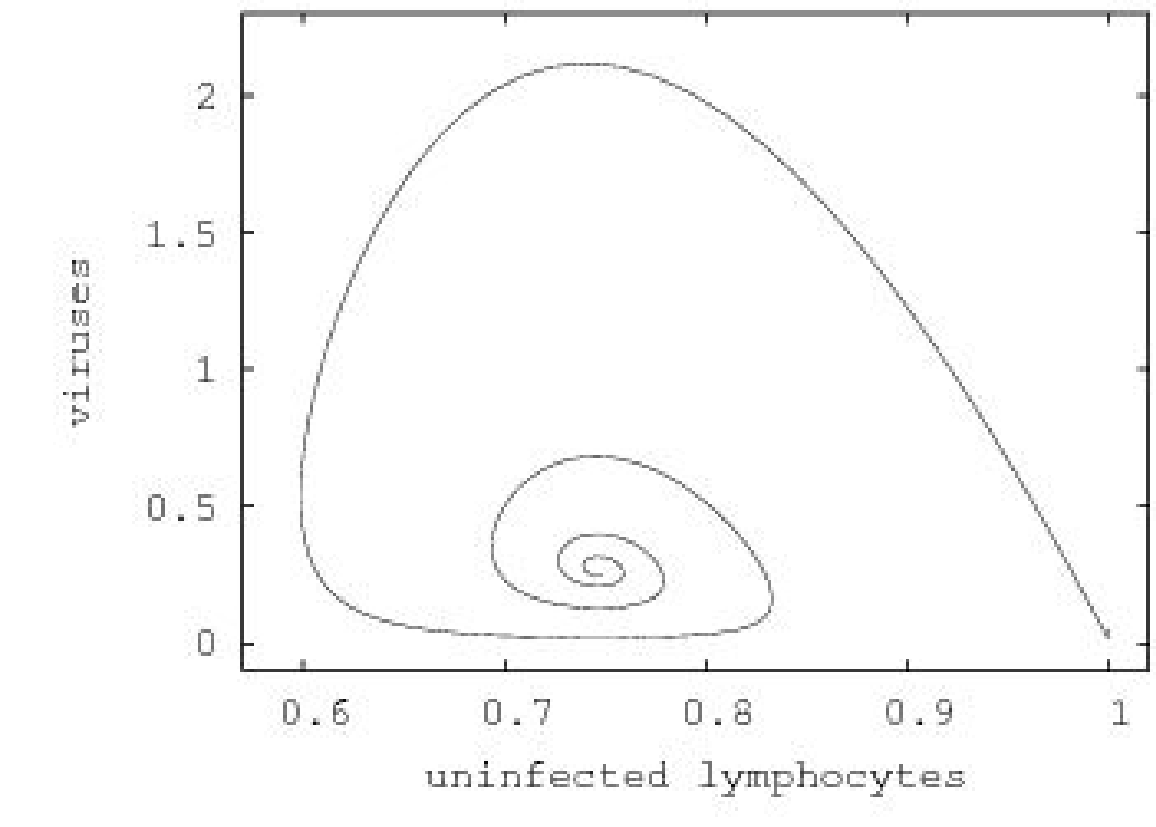}}}  
\hangindent=7mm \hangafter=1 \noindent {\bf Figure 2}. $\,$   
{\it Numerical results for Perelson model with four equations.
Case where the seropositive state is stable. 
Relatively short time evolution on the left  (200 days) and phase plane for
lymphocytes and viruses on the right for a 600 days evolution. } 
\bigskip  

\bigskip     \noindent {\bf \large 2-2   \quad  A reduced Snedecor's model}

\smallskip  
In \cite{Snedecor03}, the author gives three models of the
multiplication of virus HIV-1. Her goal is to model both drug
resistance, the different behaviors in the lymphatic tissue and the
peripheral blood, but also the immune system. Distinguishing the
blood and lymphatic tissues drives the author to have numerous
constants that model the fluxes between these body parts and enriches her
model.

In order to have a common basis with other models, we need to reduce the
above mentionned models to make their main features comparable with the
features of other models. After non-dimensionnalizing the three fields
with respect to the health value of lymphocytes ($T^*=2.5\, 10^{11}$),
we are lead to only one model in which the uninfected lymphocytes are denoted
by $T$, the infected ones by $U$ and the virus by $V$. We take the
values of the parameters in the same body part (lymphatic tissue). The
reduced Snedecor's model is written:
\moneq \label{syst_snedecor}
\left\{
\begin{array}{rcl}
\Frac{{\rm d}T}{{\rm d}t} & = & \beta (1-T) + 
\frac{r_S}{\gamma_S +V}(T-1)-(1-\alpha_S)\beta_S V T \\[2mm]
\Frac{{\rm d}U}{{\rm d}t} & = & +(1-\alpha_S)\beta_S V T -\alpha U \\[2mm]
\Frac{{\rm d}V}{{\rm d}t} & = & a U - \sigma_S V - \beta_S V T.
\end{array}
\right.
\monend 
In this system, when the parameters used by S. Snedecor appear in
terms that do not appear in other models, they are denoted with her
notation with an index $S$. For instance, the division rate of $T$
cells is $r_S=0.004 \mbox{ day}^{-1}$, the treatment efficacy
is $\alpha_S \in [0,1]$, and the viral clearance is $\sigma_S=2 \mbox{
day}^{-1}$. After non-dimensionalizing, the other parameters become
\moneqstar 
\beta_S=0.0125\mbox{ day}^{-1}, \gamma_S=4\times 10^{-5}, \beta =
0.01\mbox{ day}^{-1}, \alpha =0.7 \mbox{ day}^{-1}, \mbox{ and } a=250
\mbox{ day}^{-1}.
\monendstar 
%

  \bigskip     \noindent {\bf  2-2-1   \quad    Fixed points}

In the present subsection, we prove the following proposition:

\begin{proposition}
\label{fixed_points_S}
There exists a threshold
\begin{equation}
\label{def.s4}
\alpha_{S4}=1-\frac{\alpha(\beta_S+\sigma_S)}{a \beta_S}
\end{equation}
such that if the efficacy parameter $\alpha_S$ is above
$\alpha_{S4}$ then health is the only fixed point. If $\alpha_S <
\alpha_{S4}$ there are two fixed points: health and a seropositivity.
\end{proposition}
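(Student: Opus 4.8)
The plan is to locate every equilibrium of (\ref{syst_snedecor}) by setting its three right-hand sides to zero, and then to extract the threshold from a positivity (admissibility) condition on the coordinates of the non-trivial equilibrium. First I would dispose of the trivial case. If $V=0$, the second equation forces $U=0$, and the first then reduces to $(1-T)\bigl(\beta-r_S/\gamma_S\bigr)=0$; since $\beta\neq r_S/\gamma_S$ for the retained parameter values, this leaves only $T=1$, i.e. the health state $(T^*,U^*,V^*)=(1,0,0)$, which therefore exists for \emph{every} value of $\alpha_S$. Consequently any other (seropositive) equilibrium must have $V\neq 0$, and this is the case on which the whole argument concentrates.

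Next, for $V\neq 0$ I would eliminate $U$ between the last two equations. Writing $\alpha U=(1-\alpha_S)\beta_S VT$ from the second and $aU=V(\sigma_S+\beta_S T)$ from the third and taking the quotient (legitimate since $U,V\neq 0$) gives $\alpha(\sigma_S+\beta_S T)=a(1-\alpha_S)\beta_S T$, an equation in $T$ alone. Solving it produces the single value
\[
T^*=\frac{\alpha\,\sigma_S}{\beta_S\,\bigl[a(1-\alpha_S)-\alpha\bigr]},
\]
so that the $T$-coordinate of any seropositive equilibrium is forced, independently of $V$.

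The threshold then emerges purely from requiring this equilibrium to be admissible, i.e. to have positive fields. I would first observe that $T^*>0$ exactly when $a(1-\alpha_S)>\alpha$, that is $\alpha_S<1-\alpha/a$, and then compute that $T^*<1$ is equivalent, after clearing the positive denominator, to $\alpha(\sigma_S+\beta_S)<a\beta_S(1-\alpha_S)$, i.e. to $\alpha_S<\alpha_{S4}$ with $\alpha_{S4}$ as in (\ref{def.s4}). Since $\alpha_{S4}=1-\alpha/a-\alpha\sigma_S/(a\beta_S)<1-\alpha/a$, the condition $\alpha_S<\alpha_{S4}$ automatically secures $T^*>0$ as well; thus $\alpha_S<\alpha_{S4}$ is precisely the range in which $0<T^*<1$, while for $\alpha_S>\alpha_{S4}$ one has $T^*\ge 1$ (or, for $\alpha_S>1-\alpha/a$, $T^*<0$), an inadmissible value.

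It remains to produce the matching $V^*$ (whence $U^*=(1-\alpha_S)\beta_S V^*T^*/\alpha$), and this is the step I expect to be the genuine work. Substituting $T=T^*$ into the first equation of (\ref{syst_snedecor}) and multiplying through by $\gamma_S+V>0$ turns it into a quadratic in $V$ whose constant term and whose coefficient of $V$ change sign across $T^*=1$. I would count its positive roots from the signs of these coefficients (equivalently, from the sum and product of the roots, or by studying the monotonicity of the reduced one-variable map $V\mapsto$ [first equation at $T=T^*$] and invoking the intermediate value theorem): when $0<T^*<1$ exactly one root is positive, giving a unique admissible $V^*>0$ and hence a genuine seropositive equilibrium with all fields positive, whereas when $T^*\ge 1$ no positive root survives and health is the only equilibrium. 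Combining the two regimes yields the dichotomy asserted in the proposition, and the delicate point throughout is precisely this last sign bookkeeping, since it is there that crossing $\alpha_{S4}$ changes the number of admissible equilibria.
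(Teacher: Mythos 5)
Your proposal follows essentially the same route as the paper's proof: you eliminate $U$ between the second and third equations of (\ref{syst_snedecor}) to obtain the same forced value $T^*=\alpha\sigma_S/\bigl(\beta_S[a(1-\alpha_S)-\alpha]\bigr)$, you recover the paper's thresholds $\alpha_{S3}=1-\alpha/a$ (from $T^*>0$) and $\alpha_{S4}$ (from $T^*<1$), and you reduce the existence of $V^*$ to the same quadratic (\ref{equation_en_V_Snedecor}) obtained by clearing the denominator $\gamma_S+V$ in the first equation. Where you genuinely differ is the final root count. The paper proceeds semi-numerically: it plots the discriminant (noting the tiny window $(\alpha_{S1},\alpha_{S2})$ around $0.549$ where it dips to about $-2\times 10^{-13}$) and compiles a sign table. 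Your Vieta-style bookkeeping is cleaner and, in the seropositive regime, fully rigorous: for $0<T^*<1$ the leading coefficient $(1-\alpha_S)\beta_S T^*$ is positive and the constant term $\beta\gamma_S(T^*-1)$ is negative, so the product of the roots is negative, the roots are automatically real, and exactly one is positive — the discriminant discussion (the paper's most delicate point) is bypassed entirely.

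Two caveats, both at the step you yourself flagged as ``the genuine work''. First, your claim that no positive root survives when $T^*>1$ is not automatic from the constant term alone: with both the leading coefficient and the constant term positive, you still need the linear coefficient $(\beta-r_S)(T^*-1)+(1-\alpha_S)\beta_S\gamma_S T^*$ to be positive (so the root sum is negative, forcing both roots negative or complex). This holds here because $\beta=0.01>r_S=0.004$, but it is a parameter-dependent fact — exactly the point where the paper's argument is also parameter-dependent — and your sketch leaves it unexecuted. Second, the displayed system (\ref{syst_snedecor}) carries a typo: the regeneration term must read $\frac{r_S V}{\gamma_S+V}(T-1)$ to be consistent with the paper's own quadratic (\ref{equation_en_V_Snedecor}) and Jacobian (\ref{jacobienne_snedecor}). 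Your $V=0$ computation used the printed version — harmless there, since both readings yield health — but your asserted root counts are those of the corrected model: under the printed version the quadratic's constant term becomes $(1-T^*)(r_S-\beta\gamma_S)$, which is \emph{positive} for $T^*<1$, and your ``exactly one positive root'' conclusion would fail. So your sign bookkeeping is right, but only for the model the authors actually analyzed, not the one they typeset.
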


\noindent $\bullet$ \quad  Proof of Proposition 4.  

\noindent The search for fixed points gives two possibilities. 

\noindent
The first one is
health: $T^*=1, U^*=0,V^*=0$. 

\noindent
The existence of the second one depends on the therapy's efficacy
parameter $\alpha_S$. Different critical values of $\alpha_S$ will
appear in the discussion. The solution for $T$ is
\moneqstar 
T^*=\Frac{\sigma_S / \beta_S}{\frac{a(1-\alpha_S)}{\alpha}-1} \, ,
\monendstar 
and is drawn in Figure~3 
(left) with an infinite
value for $\alpha_S=\alpha_{S3}=1-\alpha /a=0.9972$. So, should a drug
be very efficient ($\alpha_S > \alpha_{S3}$), then $T^*<0$ and health
would be the only solution. Then the solution $V^*$ is a non-negative
solution of the second order equation:
\moneq      \label{equation_en_V_Snedecor}
V^2(1-\alpha_S)\beta_S T^*-V((r_S-\beta )(T^*-1)-(1-\alpha_S)\beta_S 
\gamma_S T^*)+\beta \gamma_S(T^*-1)=0.
\monend
Numerically, it seems that the discriminant is non-negative 
(see Figure~3  
right)  but indeed, it is negative between
$\alpha_{S1}\simeq 0.54920378$ and $\alpha_{S2}\simeq 0.5492747378$
and the minimum is about $-2 \times 10^{-13}$ ! Notice that when
perturbing the parameters this behavior remains. Except
for $\alpha_S$
not too close to 1, the discriminant is small (see Figure~3 
right). We have drawn in Figure~3 
(center) the only admissible solution $V^*$ as a function of $\alpha_S$.
It is then simple to have $U^*=(1-\alpha_S)\beta_S V^* T^* /\alpha$.

\bigskip  \centerline {
    \includegraphics[width=0.99 \textwidth]{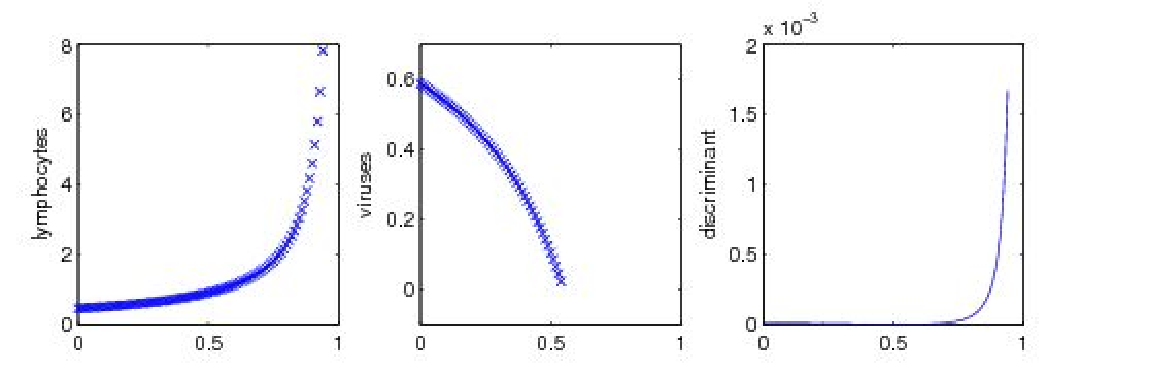}} 
 \hangindent=7mm \hangafter=1 \noindent {\bf Figure 3}. $\,$   
{\it The fixed points $T^*$ (lymphocytes), $V^*$ (virus) and the discriminant of
(\ref{equation_en_V_Snedecor}) as a function of the treatment efficacy $\alpha_S$.}  
\bigskip  \bigskip              

\noindent
If we take care of retaining only admissible solutions
($(T^*,U^*,V^*)\geq 0$), we must force $T^* \geq 0$ or $\alpha_S \leq
\alpha_{S3}$. Moreover, if $T^*$ crosses $1$ (for
$\alpha_S=\alpha_{S4}= 0.5492$
exactly see (\ref{def.s4})), the sign of the constant term in
(\ref{equation_en_V_Snedecor}) changes and so does one solution $V$ of
(\ref{equation_en_V_Snedecor}). One may then summarize the discussion
for $T^*$, and the two solutions $V^*_1$ and $V^*_2$ of
(\ref{equation_en_V_Snedecor}) in Table \ref{table1}. Indeed, there
exists a non-health solution only for $\alpha_S \leq \alpha_{S4}$. Such a
fixed point can be named seropositive solution.
\hfill $ \square $

\bigskip    \bigskip      
\centerline{ 
\begin{tabular}{r|c|c|c|c||c|}
$\alpha$ & \multicolumn{5}{c}{$\begin{array}{lccccr} 0 \hspace*{0.6cm} & 
\alpha_{S4} \hspace{6mm} &  \alpha_{S1}  & \hspace*{0.5cm} \alpha_{S2} & \hspace*{0.7cm} 
\alpha_{S3} & \hspace*{0.6cm} 1 \end{array}$}\\
\cline{2-6}
$T^*$ & \hspace*{0.3cm} + \hspace*{0.3cm}& \hspace*{0.3cm} 
+ \hspace*{0.2cm}& \hspace*{0.3cm} + \hspace*{0.3cm}& \hspace*{0.3cm} 
+ \hspace*{0.2cm}& \hspace*{0.2cm} $-$ \hspace*{0.2cm}\\
\cline{2-6}
$T^*-1$ & $-$ & + & + & + & $-$ \\
\cline{2-6}
$V^*_1$ &  $-$ & $-$ & $V^*_1 \in \mathbb{C}$ & $-$ & $+$ \\
\cline{2-6}
$V^*_2$ &  $+$ & $-$ & $V^*_2 \in \mathbb{C}$ & $-$ & $+$\\
\cline{2-6}
solution & $(T^*,V^*_2)$ & \O &\O  & \O & \O \\
\cline{2-6}
\end{tabular} }

$ \, \, $ 

\centerline { {\bf Table  1}. \quad   \label{table1} 
{\it Second fixed point existence for reduced Snedecor's model.} }
\bigskip      

  \bigskip \newpage     \noindent {\bf  2-2-2   \quad   Stability}

Since the domain of admissible fixed points is not regular,
the study of stability through the eigenvectors and eigenvalues at a
corner (such as health) needs to be more precise. We define
admissible directions to prohibit directions that do not enter the domain:
\begin{definition}
\label{admiss}
Let $ X \mapsto f(X)$ a smooth function and the associated dynamical
system $X'(t)=f(X)$. Let us denote $\mathcal{B}$ the biologically
admissible domain (all biological fields non-negative). Let $X^*$ be a
fixed point of a dynamical system ($f(X^*)=0$) at the boundary
$\partial \mathcal{B}$, and $(\lambda, u)$ one of its eigenvalue/
eigenvector.

The eigenvector $u$ is defined as admissible only if $\lambda>0$ and
either $X^* +\varepsilon u$ or  $X^* -\varepsilon u$ for positive
$\varepsilon$ enters the domain.
\end{definition}
The following proposition investigates the fixed points stability:

\begin{proposition}
\label{stability_S}
Let $\alpha_{S4}$ as defined in (\ref{def.s4}). When health
is the only fixed point ($\alpha_S > \alpha_{S4}$) it is
stable. When there are two fixed points ($\alpha_S < \alpha_{S4}$),
health is unstable in one admissible direction.
\end{proposition}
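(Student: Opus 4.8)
The plan is to linearize the system (\ref{syst_snedecor}) at the health fixed point $(T^*,U^*,V^*)=(1,0,0)$ and read stability off the Jacobian, invoking Definition \ref{admiss} to decide which unstable directions actually count. First I would compute the Jacobian $J$ at health. Since the infection term $(1-\alpha_S)\beta_S VT$ vanishes and decouples at $U^*=V^*=0$, the matrix is block triangular: the $T$-line splits off with the single eigenvalue
\[
\lambda_1=\frac{r_S}{\gamma_S}-\beta,
\]
carried by the eigenvector $(1,0,0)$, while the $(U,V)$ pair is governed by the $2\times 2$ block
\[
M=\begin{pmatrix} -\alpha & (1-\alpha_S)\beta_S \\ a & -\sigma_S-\beta_S \end{pmatrix}.
\]

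Next I would dispose of $\lambda_1$. With the stated values $r_S/\gamma_S=100$ and $\beta=0.01$ one has $\lambda_1>0$, so a crude linear analysis would wrongly declare health unstable for every $\alpha_S$. The purpose of Definition \ref{admiss} is precisely to discard this: health sits on the edge $\{U=V=0\}$ of $\mathcal{B}$ (the constraint $T\geq 0$ being inactive, as $T^*=1$), and the eigenvector $(1,0,0)$ is tangent to that edge, so neither $(1\pm\varepsilon,0,0)$ enters the interior of $\mathcal{B}$ and the direction is not admissible. Genuine stability is therefore controlled entirely by $M$. Its trace $-\alpha-\sigma_S-\beta_S$ is always negative, so only the sign of
\[
\det M=\alpha(\sigma_S+\beta_S)-a(1-\alpha_S)\beta_S
\]
matters, and a one-line rearrangement shows $\det M>0$ if and only if $\alpha_S>\alpha_{S4}$, with $\alpha_{S4}$ exactly as in (\ref{def.s4}).

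It then remains to split into the two cases. If $\alpha_S>\alpha_{S4}$ then $\det M>0$ and, the trace being negative, both eigenvalues of $M$ have negative real part; the only positive eigenvalue of $J$ is then $\lambda_1$, whose eigendirection was shown inadmissible, so no admissible unstable direction survives and health is stable. If $\alpha_S<\alpha_{S4}$ then $\det M<0$, so $M$ has two real eigenvalues of opposite sign and in particular one positive eigenvalue $\mu>0$. I would check admissibility of its eigenvector by solving $(M-\mu I)w=0$ with $w=(w_U,w_V)$: the first row gives $w_U=(1-\alpha_S)\beta_S\,w_V/(\alpha+\mu)$, so $w_U$ and $w_V$ share the same sign and may be taken positive, whence the full eigenvector points strictly into $\mathcal{B}$ and is admissible. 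This yields exactly one admissible unstable direction, as claimed. The only real subtlety, and the step I would treat most carefully, is the bookkeeping around Definition \ref{admiss}: arguing cleanly that the positive eigenvalue $\lambda_1$ is biologically inert while the positive eigenvalue of $M$ is not, i.e. that ``enters the domain'' must be read as reaching the interior of $\mathcal{B}$.
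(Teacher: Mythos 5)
Your analysis of the $(U,V)$ block is exactly the paper's: the quadratic factor $\lambda^2+\lambda(\alpha+\sigma_S+\beta_S)+\alpha(\sigma_S+\beta_S)-(1-\alpha_S)a\beta_S$ has negative trace and positive discriminant, its determinant changes sign precisely at $\alpha_S=\alpha_{S4}$, and your verification that the unstable eigenvector (when $\alpha_S<\alpha_{S4}$) has $U$ and $V$ components of the same sign is the right admissibility check. But there is a genuine problem in your treatment of the $T$-direction. Your eigenvalue $\lambda_1=r_S/\gamma_S-\beta$ comes from reading the proliferation term in (\ref{syst_snedecor}) literally as $\frac{r_S}{\gamma_S+V}(T-1)$. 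The paper's Jacobian (\ref{jacobienne_snedecor}) has $(1,1)$ entry $\frac{r_S V^*}{\gamma_S+V^*}-\beta-(1-\alpha_S)\beta_S V^*$ and $(1,3)$ entry containing $\frac{r_S\gamma_S(T^*-1)}{(\gamma_S+V^*)^2}$; both, as well as the fixed-point equation (\ref{equation_en_V_Snedecor}), are the derivatives of the term $\frac{r_S V}{\gamma_S+V}(T-1)$, not of the term as displayed. The display contains a typo (a missing factor $V$ in the numerator); with the intended term the proliferation contribution vanishes at $V^*=0$, the $T$-eigenvalue at health is simply $-\beta<0$, and the characteristic polynomial factors as $-(\beta+\lambda)(\lambda^2+\cdots)$ exactly as the paper states. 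No admissibility maneuver is then needed in the stable case.

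Your patch---declaring $\lambda_1>0$ ``biologically inert'' by reading ``enters the domain'' in Definition \ref{admiss} as ``reaches the interior of $\mathcal{B}$''---does not work, and this is where your proof would fail. The definition asks whether $X^*\pm\varepsilon u$ enters $\mathcal{B}=\{T>0,\,U\geq 0,\,V\geq 0\}$, and the points $(1\pm\varepsilon,0,0)$ lie in $\mathcal{B}$, so under the paper's definition that direction \emph{would} be admissible. More decisively, the face $\{U=V=0\}$ is invariant, and on it the literal system gives $\frac{\dd T}{\dd t}=\bigl(\frac{r_S}{\gamma_S}-\beta\bigr)(T-1)\approx 99.99\,(T-1)$: a perturbation of $T$ alone (perfectly biological) is exponentially repelled from health, so for the literal system health is genuinely unstable for \emph{every} $\alpha_S$ and no bookkeeping about admissible directions can restore the claimed stability when $\alpha_S>\alpha_{S4}$. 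The purpose of Definition \ref{admiss} in the paper is the opposite of how you use it: it certifies that the one unstable direction in the case $\alpha_S<\alpha_{S4}$ is meaningful (one sign of $\varepsilon$ keeps all fields non-negative), not to discard spurious positive eigenvalues. The correct repair is to work with the Jacobian (\ref{jacobienne_snedecor}), after which your $2\times 2$ analysis completes the proof exactly as in the paper.
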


\noindent $\bullet$ \quad  Proof of Proposition 6. 

\noindent The Jacobian matrix of the second member of (\ref{syst_snedecor})
enables us to study the local behavior of the solutions. It is:
\moneq      \label{jacobienne_snedecor}
\left( \begin{array}{ccc}
\Frac{r_S V^*}{\gamma_S+V^*}-\beta-(1-\alpha_S )\beta_S V^* & 0 & 
-(1-\alpha_S)\beta_S T^*+\Frac{r_S \gamma_S (T^*-1)}{(\gamma_S+V^*)^2} \\
(1-\alpha_S )\beta_S V^* &-\alpha & (1-\alpha_S )\beta_S T^* \\
-\beta_S V^* & a & -\sigma_S - \beta_S T^*
\end{array} \right).
\monend  
In the case of the first fixed point (health $(T^*,U^*,V^*)=(1,0,0)$),
the characteristic polynomial is
$-(\beta+\lambda)(\lambda^2+\lambda(\alpha+\sigma_S+\beta_S)+\alpha(\sigma_S+\beta_S)
-(1-\alpha_S)a\beta_S)$. The discriminant of the second order
polynomial is $(\alpha-\sigma_S-\beta_S)^2+4(1-\alpha_S)a\beta_S
>0$. So the roots are real and it is possible to discuss the sign of the
roots. The already met threshold value $\alpha_{S4}=0.5492$
(exactly see (\ref{def.s4})) is stil the key value for discussing on $\alpha_S$.
If $\alpha_S > \alpha_{S4}$, health is alone
({\em cf}. Theorem \ref{fixed_points_S}) and the roots are negative, so
it is locally stable. If $\alpha_S < \alpha_{S4}$, there are two
stable and one unstable eigenvector. We easily check that the unstable
eigenvector at this fixed point (health), located at a corner of the
domain, enters the domain in the sense that one direction along this
eigenvector (among the two) lets all the fields be non-negative. So
this direction of instability is admissible.
\hfill $ \square $
 
\bigskip  \noindent 
We checked numerically that the locally stable fixed point ($\alpha_S
> \alpha_{S4}$: health alone) remains stable even under non-small
perturbations. In Figure~4,  
we draw the dynamics of stable health. 
In this computation, we take the same parameters as Snedecor:
\moneqstar    
\begin{array}{c}
r_S=4\times 10^{-3}\mbox{ day}^{-1}, \sigma_S=2\mbox{
day}^{-1}, \beta_S=1.25\times 10^{-2}\mbox{ day}^{-1}, \gamma_S=4
\times 10^{-5}, \\
\beta = 0.01\mbox{ day}^{-1}, \alpha = 0.7\mbox{
day}^{-1}, a = 250\mbox{ day}^{-1},
\end{array}
\monendstar 
\noindent 
and for the specific case of
health stable, we take $\alpha_S=0.6> \alpha_{S4}$ for the treatment
efficacy.
The qualitative evolution of the pair lymphocytes-virus is comparable 
to the one of Perelson's model presented on Figure~1. 
The number of virus loses a factor 5 in 3 days typically. 

\bigskip  \centerline {
{\includegraphics[width=.49 \textwidth]{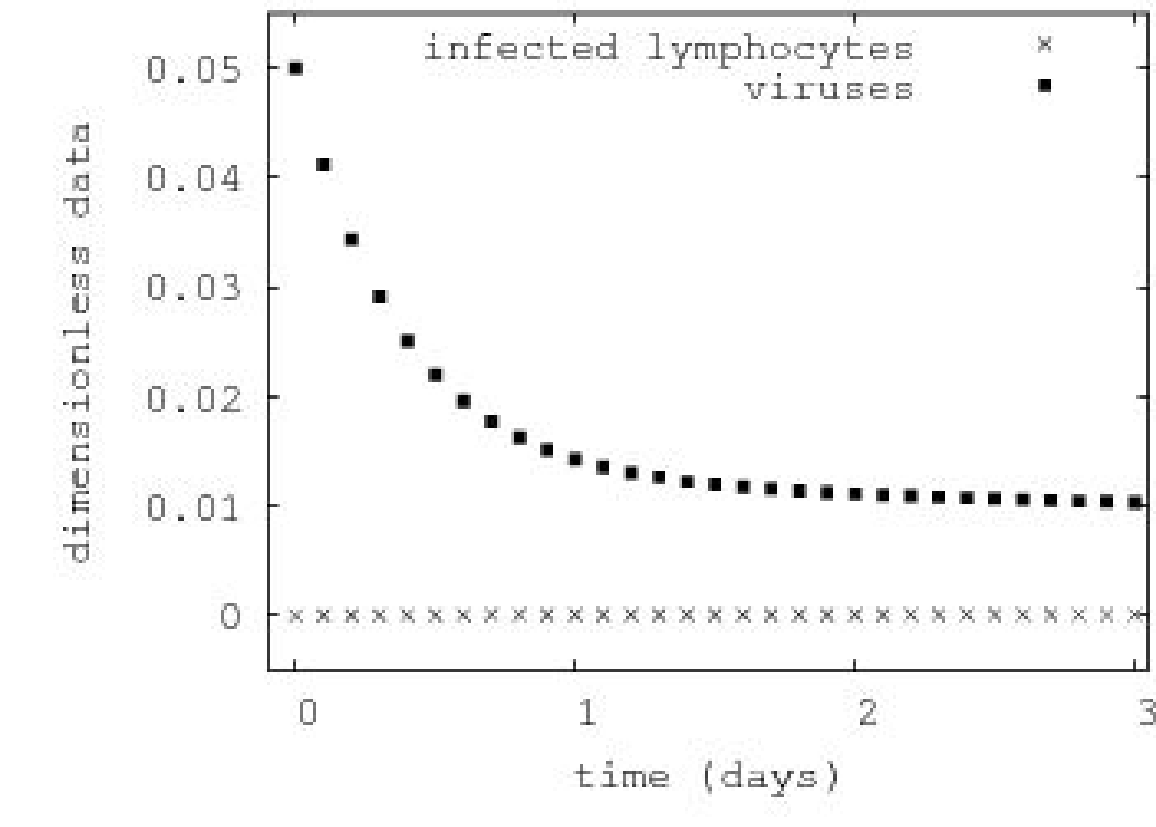}}  
{\includegraphics[width=.49 \textwidth]{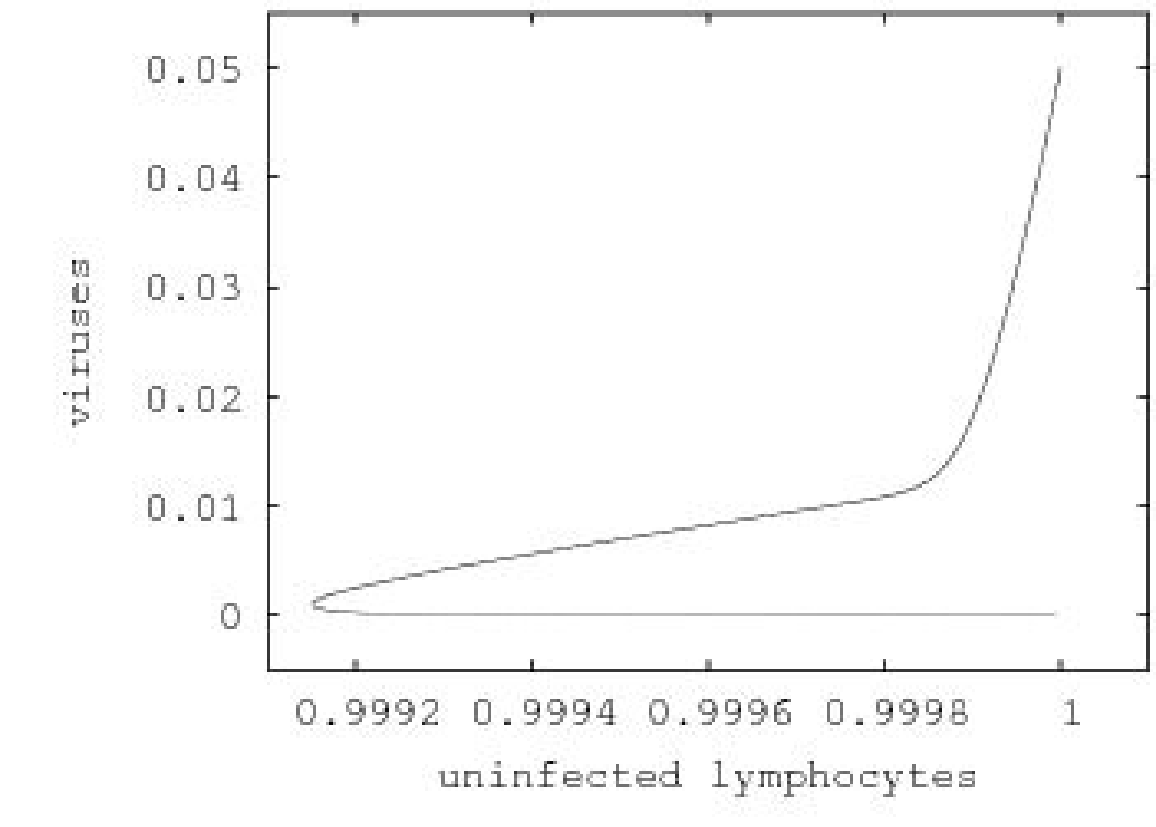}}}  
\hangindent=7mm \hangafter=1 \noindent {\bf Figure 4}. $\,$   
{\it  Numerical results for Snedecor model with three equations.
Case where the health is stable. Short time evolution on the left (3 days) and phase plane for
lymphocytes and viruses on the right for a 600 days evolution. } 
\bigskip  

\noindent 
In the case where there exists also a second fixed point
(seropositivity: $\alpha_S < \alpha_{S4}$), even odd initial
conditions like $(T_0,U_0,V_0)=(1,1,1)$ lead to the second fixed point
available. This second fixed point happens to be numerically locally
stable as can be seen on Figure~5 
where the parameters are the same as Snedecor's, as recalled above, except the
therapy efficacy $\alpha_S=0.3$. In this figure, one may check that
health is locally unstable as the solution evades health to get closer
to a seropositivity. Notice that this second fixed point can be
interpreted as seropositivity. But as it is stable, the model predicts
no death ...  The oscillations around the fixed point could be seen as
the observed `blips' (sudden and brief bursts of viremia). But the
time scale at which the system is sufficiently close to the fixed
point is more than one year while in reality, the first phase of the
infection takes some weeks.

\bigskip  \centerline {
{\includegraphics[width=.49 \textwidth]{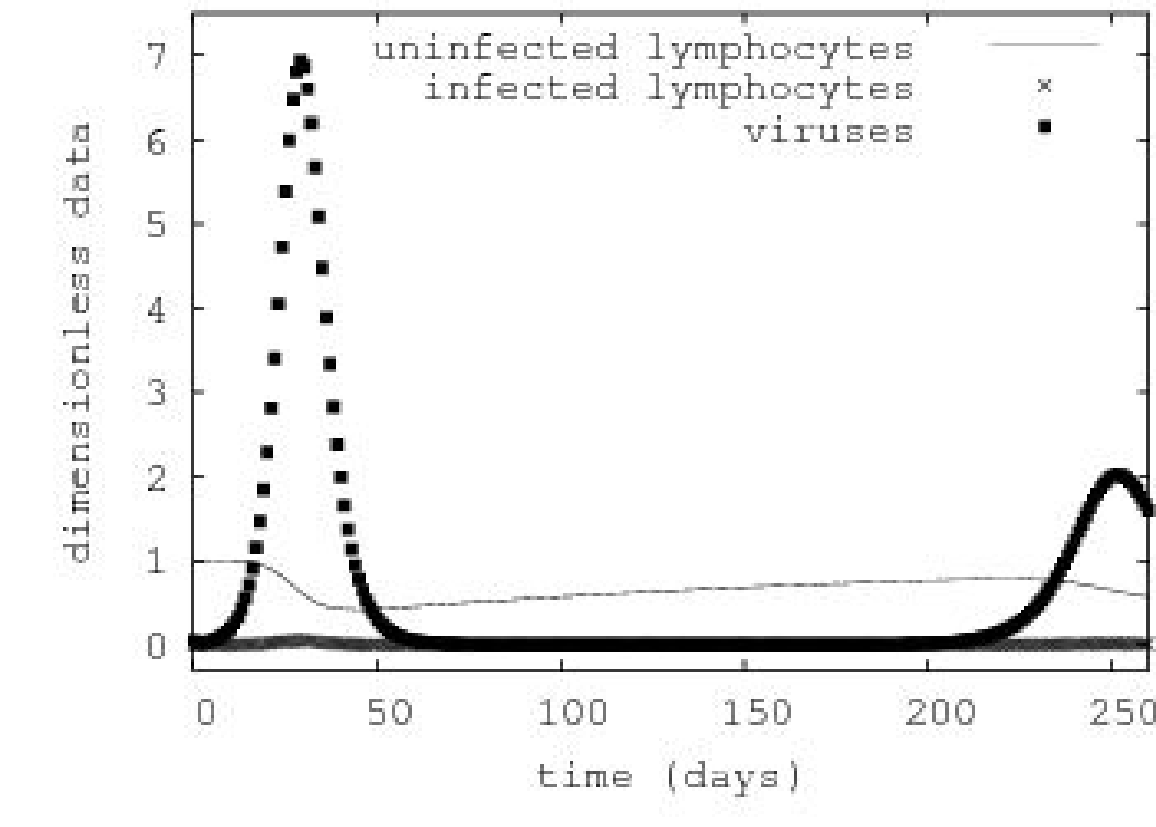}}  
{\includegraphics[width=.49 \textwidth]{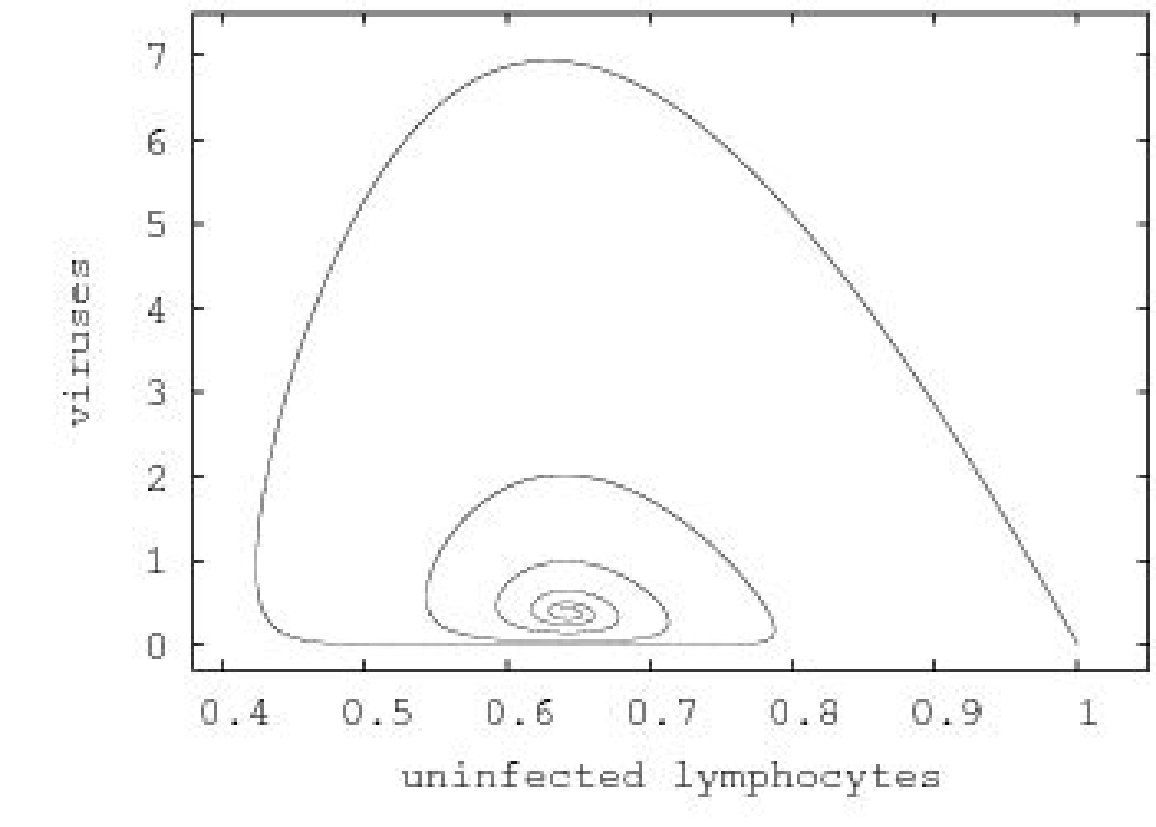}}}  
\hangindent=7mm \hangafter=1 \noindent {\bf Figure 5}. $\,$   
{\it  Numerical results for Snedecor model with three equations.
Case where the seropositive state is stable. Relatively short time evolution on the left 
(260 days) and phase plane for
lymphocytes and viruses on the right for a 900 days evolution. }
\bigskip  \bigskip              

  \bigskip       \noindent {\bf  2-2-3   \quad  Some comments}

The disappearance of the second fixed point (seropositivity) when
$\alpha_S$ increases could be meaningful. Yet in real life, even for
very efficient drugs, the virus kills. Despite highly active
multitherapies, over ten percent of AIDS patients face therapeutic
escape due to viruses which were indetectable for years while
developing resistance against all approved drugs.
So, to what extent can {\em any} stable fixed point be
meaningful ? This criticism is valid for all the models analyzed in this
article.

The original model makes a distinction between the drug-resistant and
drug-sensitive viruses which is
meaningful. But it makes no difference between infectious and
non-infectious viruses.

In a sense, the Snedecor's model takes into account the immune
system. But it was proved that ``the growth fraction of CD4$^+$ (...)
was correlated (...) with viral load'' \cite{Sachsenberg}. No such
correlation appears in her model. Indeed, the immune system is modeled
only through its exhaustion when $V$ is too large. The
susceptibility to produce $T$ in the presence of virus is not modeled
as it is done in \cite{Frid_Jabin_Perthame} with a term $\Sigma(V/T)T$.

If $\alpha_S=0$ (no treatment), the coefficient before the terms of
disappearance of $T$, of $V$ and appearance of $U$ is the same. The
identity of the $V$ and $T$ terms indicates that in this model, each
time a virus infects a $T$, it disappears. So viruses are assumed to be
{\em free viruses}. Indeed, the author defines her field $V$ as ``free
virus''. Similarly, the term $-\beta_S V T$ in the evolution
of $V$ is by no means a model of the immune system response. Moreover, the
author defines parameter $\beta_S$ as ``infection rate of ... $T$
cells by ... virus''. We emphasize here the coherence of Snedecor's
model which takes into account ``free virus'': a free virus {\em
disappears} each time it infects a lymphocyte. This property is not
satisfied by most other models as it is stressed in \cite{Leenheer_Smith} which emits the
same critic (p. 1314) but argues that this neglection does not change the main features of
the models.

In this model, viruses disappear only through natural death or infection
of a $T$. So there is no immune system effect. Indeed, $\sigma_S$ is
called ``viral clearance (death) rate'', but it does not depend on
$T$. This is frequent in HIV modelling, but not realistic unless the
immune system is neglected ! This is discussed in \cite{perelson02}
(pp. 31-32) where the author acknowledges his trouble: ``The fact
that models with constant [$\sigma_S$] can account for the kinetics of
acute HIV infection is surprising''.

\bigskip     \noindent {\bf \large 2-3   \quad  A Nowak and May model}

\smallskip  
In \cite{Nowak-May}, the authors also propose some models
taking the antigenicity into account. In Chapter 12 of their book,
they propose various models, suggesting that ``antigenic variation
generates the long-term dynamics that give rise to the overall pattern
of disease progression in HIV infection'' (p. 124). Their ``general
idea was that the rapid genetic variation of the virus generates over
time viral populations (quasispecies) which are more and more adapted
to grow well in the microenvironment of a given patient'' (p. 124).

According to their best model, ``The immune system and the virus
population are in a defined steady-state only if the antigenic
diversity of the virus population is below a certain threshold
value. If the antigenic diversity exceeds this threshold [then]
the virus population can no longer be controlled by the immune
system.'' (\cite{Nowak-May} p. 125). Such a behavior seems
meaningful. With our notations their model writes:
\begin{eqnarray*}
\dt{V_j}{t} & = & V_j(r_{NM}-p_{NM}T_j-q_{NM} Z), \; \forall i=1, ...N\\
\dt{T_j}{t} & = & c_{NM} V_j -b_{NM} T_j -U\, V \, T_j,\; \forall i=1, ...N\\ 
\dt{Z}{t} & = &  k_{NM} V -b_{NM} Z -u_{NM} \, V \, Z.
\end{eqnarray*}
where $V=\sum_j V_j$ and $Z$ denotes the ``cross-reactive immune
response directed against all different virus strains''
(\cite{Nowak-May} p. 130).

In the models they study, they set various parameters to values
assumed to be constant. More precisely, they assume the parameters do
not depend on $N$. Why is it impossible ? Assume there were 
only linear and non-linear terms like:
\moneqstar 
\dt{V_j}{t} =  r_{NM} \, V_j-p_{NM} \, T_j \, V_j \; \forall j.
\monendstar 
What is the integrated (in $j$) equation ? If we assume all the
populations $T_j$ and $V_j$ are independent on $j$, then $T_j=\sum_k T_k /N = T/N$ and
$V_j=V/N$. Then the integrated (in $j$) equation is:
\moneqstar  
\dt{V}{t} =  r_{NM} \, V -\frac{p_{NM}}{N} \, T \, V \hspace*{1.5cm} \mbox{ if }
T_j=\frac{T}{N}, V_j=\frac{V}{N}.
\end {equation*} 
Since it is the only equation biologically measurable, the measured
parameter in an experiment will be $p_{NM}/N$ and not $p_{NM}$. In
other words $p_{NM}=O(N)$. This should modify the mathematical study.

The conclusion of their study is that {\em in their
models} ``the cross-reactive immune responses provide a selection
pressure {\em against} antigenic variation, while strain-specific
responses select {\em for} antigenic variation'' \cite{Nowak-May}.

\bigskip \bigskip   \noindent {\bf \Large 3) \quad   A new model with antigenic variable }
  
\smallskip  
The biological diversity of antigenicities is already modeled in
\cite{Althaus_Boer,Frid_Jabin_Perthame,Nowak-May,Pastore_Zubelli}. Yet,
in these references, the authors use a finite number of possibilities,
most of the time in ``flat'' spaces of antigenicities. Since the
antigenicity is a ``microscopic'' and unmeasurable quantity, these
models (including ours) still need to prove they provide a significant
insight.

So as to build up our model, we will follow the biological description
of the various phenomena concerning the various fields: we denote $T_i(t)$ the
uninfected lymphocytes of antigenicity $i \in \mathcal{A}$, $U_i(t)$
the infected lymphocytes of antigenicity $i$, $V_i(t)$ the
infectious viruses of antigenicity $i$ and $W_i(t)$ the non-infectious
viruses of antigenicity $i$. The space $\mathcal{A}$ is still
undetermined. The best set is not known, but when trying to get a
macroscopic model (by integration of antigenicity $i$), we will need
to investigate the various possibilities to get a limiting operator
for $(t,i) \in \mathbb{R}^+ \times \mathcal{A}$. This is the issue of
a forthcoming paper. We will also need the sum of each field:
\moneqstar  
T=\sum _j T_j, U= \sum_j U_j, V=\sum_j V_j \mbox{ and } W= \sum_j W_j.
\monendstar

\bigskip     \noindent {\bf \large 3-1   \quad  Lymphocytes evolution}

\smallskip    
The variation in time of $T_j(t)$ (${\rm d}T_j / \dd t$) must
take into account various phenomena:

\monitem a natural death and generation modeled by a term like:
\begin{equation}
\label{modele_eq4.5}
\left(\frac{{\rm d}T_j}{{\rm d}t} \right)_{\mathrm{natural}}= -\beta_j T_j +\gamma_j
\end{equation}
It has also been proposed a logistic term (for instance in
\cite{Boer_Perelson}), but it prohibits high levels of T cells.

\monitem when a virus ($V_j$ or $W_j$) of antigenicity $j$ is
detected, the immune system generates lymphocytes of the same
antigenicity to fight them.  This can be modeled by an exponential
multiplication whose time constant is roughly proportional to the
inverse of the number of viruses $V_j+W_j$. This is a
``Lotka-Volterra'' type term that we met also in
\cite{Nowak_Bangham,Pastore_Zubelli,Boer_Perelson} for the
cross-antigenic immune action (CTLs or effectors):
\moneq \label{modele_eq2}
\left(\frac{{\rm d}T_j}{{\rm d}t} \right)_{\mathrm{growth}}= +C_j(V_j+W_j) \, T_j
\monend
The generation of lymphocytes by the immune system is studied in
\cite{Wodarz_Hamer}. The authors model it as a logistic growth $+C V T
(1-T/T_{max})$ (also discussed critically in \cite{Callaway_Perelson})
to represent the ``autocatalytic cell division''. But surprisingly the
effect of the immune system against virus or infected lymphocytes is
neglected. They conclude that the ``initial number of the HIV-specific
CD4-T cells'' is crucial. Yet, it depends on the definition of the
initial time. At the very first time, it is necessarily small (for 5
liters of blood). Such a logistic term (whatever the infection term and whever it models
the natural growth or the immune system reactivity) would force the
immune system not to have any overshoot of T cells ($T>
T_{max}$). This is not realistic.

In their article \cite{Boer_Perelson}, de Boer and Perelson investigate the
immune system production of T and infection modeling. In this central
article, but on macroscopic models, they propose a term like ours
and even saturate it (their (13)). As they conclude ``one may model
disease progression by allowing the virus to evolve immune-escape
variants increasing the diversity of the quasi-species [references
proposed]. Since this requires high-dimensional models, this form of
disease progression is not considered any further here''
(p. 208). Such a study is the goal of the present article.

In \cite{Frid_Jabin_Perthame}, the authors model ``antigenicity'' through
a real variable. At first glance, one may believe that their term
$\Sigma(\, (\eta +V)/T)T$ (with our
notations), where $\eta$ is the resource, is like our's for infection. Although the
$\Sigma$ function has the same properties as our $J$ (see later), our term models
infection, while their's models the reactivity of the immune
system. So they even have opposite sign. Their term should be
compared to our $C_j(V_j+W_j)T_j$
except that they bound the importance of $V$ for production of $T$, so
modeling a kind of exhaustion of the immune system very interesting.

the viruses attack lymphocytes independently of the
antigenicity. More precisely, there are two regimes:


\sonitem If $V/T \le 1$ as a virus may attack only one lymphocyte $T_k$ \cite{Dern}, the
ratio of attacked lymphocytes is $V/T$. 
If we introduce  a time constant, it gives a term like:
\moneqstar   
\left(\frac{{\rm d}T_j}{{\rm d}t} \right)_{\mathrm{infection}} \simeq -\Frac{1}{\tau_j
}\left(\Frac{V}{T} \right) T_j, \quad \mbox{ if } \frac{V}{T} \leq 1.
\monendstar   
\sonitem If $V/T \ge 1$, as a virus may infect only one lymphocyte \cite{Dern}
only part of the viruses may infect the lymphocytes. More precisely, no more
than $T$ viruses may infect. In other words, the ratio of attacked
lymphocytes may not be superior to 1. There is a kind of saturation of
the efficacy of predators $V_j$ while the number of lymphocytes
decreases with time. Up to a time constant, 
we saturate the infection ratio:
\moneqstar    
\left(\frac{{\rm d}T_j}{{\rm d}t} \right)_{\mathrm{infection}} \simeq
-\Frac{1}{\tilde{\tau_j} } T_j, \quad \mbox{ if } \frac{V}{T} \geq 1.
\monendstar   
\sonitem 
If $V/T\sim 1$, the two terms must match by continuity, and so $\tau_j=\tilde{\tau_j}$.
%
This complex behavior may be compiled due to a non-linear function $J$
of the ``min-mod'' type:
\moneq   \label{modele_eq3}
J(\xi) = \left\{ \begin{array}{cr} \xi & \hspace*{1cm} \mbox{ if } \xi \ll 1 \\
1 & \hspace*{1cm} \mbox{ if } \xi \gg 1 \end{array} \right. .
\monend  
The effect discussed here can be modeled by
\moneq   \label{modele_eq4}
\left(\frac{{\rm d}T_j}{{\rm d}t} \right)_{\mathrm{infection}} =-\Frac{1}{\tau_j
}J\left(\Frac{V}{T} \right) T_j.
\monend  
In order to justify our essentially linear
term in (\ref{modele_eq4}) in an other way, we make hereafter various assumptions
and wonder how our term should behave upon these assumptions, seen how reality behaves.

Firstly we assume that we double $V_j$ (and only it) without changing
$V$ or to a noticeable extent, while $V/T$ remains small. Then the effect does not
change and so the term must not change. This prohibits a simple term
like $-V_j /\tau_j$.

Secondly we assume that all the $V_i$ are doubled (including $V_j$)
and so $V$ is doubled too for a
still small $V/T$. Obviously the effect is doubled. So the
modeling term should depend on $V_i$ (whatever $i$) only through
$V$.

Thirdly we assume that we double $T_j$, and only it (not the more general
$T_i$), without changing
$T$ (or to a noticeable extent) nor $V$ (and still $V/T \ll 1$). Then the
effect doubles. It proves that the term should depend linearly on $T_j$.

At this stage, we have a linear dependance both in $V$ and $T_j$. If we
do not take care, we might deduce a $-V T_j/\tau_j$ term that leads to $-VT/
\tau$ when integrated over $j$. We still need to explain
why our term {\em may not} depend linearly on $T$ although it depends
on $T_j$.

To that end, we make the assumption that we
double {\em all} the $T_i$ (including $T_j$).
Since a virus may infect only one lymphocyte at a time, the effect
should not be modified as the
limiting parameter is not the amount of lymphocytes but the amount of
viruses. Indeed, assume a patient has caught any disease that makes his immune system produce
lymphocytes, it should not make the HIV infection more virulent, in the
first stage where $V/T \ll 1$. This is satisfied by our term and not by $-V T_j/ \tau_j$.

\smallskip  
Notice that by summing over $j$ and assuming $\tau_j$ constant, we
find $\dd T/\dd t= -V/ \tau$ in the first regime ($V\le T$) and $\dd
T/\dd t= -T/ \tau$ in the second regime ($V\ge T$). Moreover, if $V
\gg T$ the effect does not depend on $V$ as the limiting factor is the
presence of lymphocytes. These are expected as they seem natural.
Most authors have considered that the infection term should be
quadratic like $-V\, T/\tau$ (mass-action). This conclusion may not be
drawn from our considerations. It is even denied as the regimes
exclude one another.  It must be recalled that all the models are
designed to be tested with the only biologically meaningful fields
$T+U$ and $V+W$. Yet the phenomenon we model is modeled
by two terms of opposite signs that simplify in the evolution equation
of $T+U$. So it should never be measured in macroscopic fields.  Yet, it
translates into the model a crucial biological reality. So this term
would deserve to be tested and such a test is postponed to a
forthcoming research.

A term very similar to ours can be found in the article of de
Boer \cite{Boer} who uses a Michaelis-Menten kinetic: $-\beta T V
/(h+T+V)$ for the infection (and parameters $h$ and $\beta$). Such a
term saturates the infection when there are numerous $T+V$. But for
moderate or low $T+V$, the term is
essentially quadratic (mass-action). The authors critic models
``for acute HIV infection [that] tend to ignore saturations
effects and have simple ``mass-action'' terms ''. They also
give a toy model
\moneqstar   
\dt{V}{t} = (r-kT)V,
\monendstar   
for which the infection would get cleared as soon as $T> r/k$
``which is {\em independent} of the size of the pathogen
population ... which is entirely unrealistic ''. Their model
is extended in \cite{Althaus_Boer} to take into account ``specific parts of the viral
proteins, i.e., epitopes'' that
are the same as our antigenicities. But no real mutation is modeled.
Their equations do not model the immune system
effect against the virus.

An explicit discussion of such a term is also done in
\cite{Callaway_Perelson} (p. 37) where the authors discuss the
classical mass-action term for infection. As they say, this notion
``is valid when the system is well mixed [...] and there are
significant quantities of each reactant.''. They propose various terms
which have the same behavior as ours. When they try a Michaelis-Menten
like term (such as $kVT/(\alpha+V+T)$ or our term), it does not
prohibit their two main critics for admissibility of models: it reachs
too low viremia that should mean extinction and still has a strong
dependence of the fixed point $V^*$ on the drug efficacy. So they
reject such an infection term. Why this critic does not apply to our
model ? First their definition of $V$ is for {\em free} virus. So
vanishing viremia does not prove there is no more virus in the
body. Second the immune system is indeed sufficiently strong to
eradicate any {\em given} antigenicity. Only the endless mutation and
sanctuaries kill the patient. So ``macroscopic'' models, such as the
ones they consider, should not be able to model the race between the
immune system and the virus whose {\em differences} of velocities may
explain the slow eradication of lymphocytes. This is more likely to be
contained in ``microscopic'' models taking antigenicity and mutation
into account. Notice that the authors exhibit some models that do not
have the two main drawbacks. They are compartment-like models where
the drug is not efficient in a compartment. So, in a sense they are
rather similar to full our model with $N > 1$.

In \cite{Frid_Jabin_Perthame}, the authors model ``antigenicity'' through
a real variable. Their infection term is of the shape $VT/(1+V)$ 
which does not depend on antigenicity, models a saturation
effect on $V$ but not on $T$. 
 
\smallskip  
The consolidated evolution equation for the lymphocytes is the sum
\begin{eqnarray} \nonumber
\Frac{\dd T_j}{\dd t} & = & \left(\frac{{\rm d}T_j}{{\rm d}t}
\right)_{\mathrm{natural}}+\left(\frac{{\rm d}T_j}{{\rm d}t}
\right)_{\mathrm{growth}}+\left(\frac{{\rm d}T_j}{{\rm d}t} \right)_{\mathrm{infection}} \\
\label{modele_eq5}
\Frac{\dd T_j}{\dd t} & = &-\beta_j \, T_j+\gamma_j+C_j(V_j+W_j) \, T_j -\Frac{1}{\tau_j
}J\left(\Frac{V}{T} \right) T_j.
\end{eqnarray}

\bigskip     \noindent {\bf \large 3-2   \quad  Infected lymphocytes evolution }

\smallskip   
The evolution of infected lymphocytes depends on various effects: 

\monitem the infection of a lymphocyte by a virus (same term as for
$T_j$ (\ref{modele_eq4}) with a plus sign) generates an infected
lymphocyte $U_j$:
\moneqstar    
\left(\frac{{\rm d}U_j}{{\rm d}t} \right)_{\mathrm{generation}} =+\Frac{1}{\tau_j
}J\left(\Frac{V}{T} \right) T_j ;
\monendstar   
\monitem and a natural death:
\moneqstar    
\left(\frac{{\rm d}U_j}{{\rm d}t} \right)_{\mathrm{natural}} =-\alpha_j \, U_j.
\monendstar

Notice, that the death rate of infected lymphocytes is about 70 times
greater than that of uninfected lymphocytes. Indeed, in \cite{Snedecor03},
the author proposes various articles among which \cite{Finzi} for the
death rate of infected lymphocytes and \cite{Sachsenberg} for the
death rate of uninfected lymphocytes. The ratio of these rates is
about 70. 

To summarize, we have:
\moneq  \label{modele_eq6}
\Frac{\dd U_j}{\dd t}= \left(\frac{{\rm d}U_j}{{\rm d}t}
\right)_{\mathrm{generation}}+\left(\frac{{\rm d}U_j}{{\rm d}t}
\right)_{\mathrm{natural}}= +\Frac{1}{\tau_j }J\left(\Frac{V}{T} \right) T_j -\alpha_j \,
U_j.
\monend

\bigskip     \noindent {\bf \large 3-3   \quad   Infectious and non-infectious virus evolution }

\smallskip     
The multiplication of viruses is the main phenomenon and occurs in the
infected lymphocytes. So the sum in $j$ growth term of $V+W$ must
be 
\moneq \label{modele_eq6.5}
\sum_j \left(\frac{{\rm d}(V_j+W_j)}{{\rm d}t} \right)_{\mathrm{growth}}= \left( \frac{\dd
  (V+W)}{\dd t}\right)_{\mathrm{growth}}= a\,U,
\monend   
as taken into account by \cite{Nowak-May},
\cite{Snedecor03} and \cite{perelson02} (whether they distinguish infectious
and non-infectious virus or not).

We do not distinguish infecting and free viruses as the other authors do.
This would have led us to three identical terms (up to the sign). Two
terms for the infection of T lymphocytes in the evolution equations of $T$
(sign -) and of $U$ (sign +). One more term for the disappearance of a
virus in the evolution equation of $V$ (sign -). The latter is almost always
omitted. One potential justification for
this omission is offered by the authors of \cite{perelson99mathematical}
(p. 10) who argue that the term $k_i T_i V$ is
small in comparison with $c V$. In \cite{Nowak_Bangham}, the authors
say (note 20) that ``if a large number of virus particles is produced,
only a few of which will end up in host cells, then a constant death
term is a reasonable approximation''.

The evolution of virus depends also on mutations. Some of them modify
the ability to infect and others the antigenicity. Here, those two
properties are considered as independent, as mentioned in the
introduction where references are given.

Only one parameter $\theta$ measures the probability to mutate to an
infectious offspring (necessarily from an infectious virus). So
$1-\theta$ is the probability to mutate to a non-infectious
offspring. Since we assume antigenicity and ability to infect are
independent, we may assume $\theta$ does not depend on $j$ (nor
$k$).

Let us denote $S_{kj}$ the probability to mutate from an antigenicity
$k$ (only $V_k$ as $W_k$ does not even infect and so does not mutate)
to an antigenicity $j$ (either $V_j$ or $W_j$) per unit of time. So we
have:
\moneq         \label{modele_eq7}
S_{kj}\geq 0, \hspace*{2cm} \sum_j S_{kj}=1,
\monend   
and $S_{kj}$ depends {\em a priori} on the number of antigenicities. The offspring
of a virus $V_k$ will mutate to $V_j$ with the probability $\theta
S_{kj}$ (the case $k=j$ where there is no mutation is included). So it
will mutate to $W_j$ with the probability $(1-\theta) S_{kj}$.\\

\smallskip  
As a consequence of the assumption that antigenicity and ability to
infect are independent, equation (\ref{modele_eq6.5}) can be split into
two parts :
\moneq  \label{modele_eq8.25}
\left( \frac{\dd V}{\dd t} \right)_{\mathrm{growth}}= a \theta U, \quad \left( \frac{\dd
  W}{\dd t} \right)_{\mathrm{growth}}= a (1-\theta) U.
\monend   
 Notice that the value of $\theta$ suggested by \cite{Thomas}
ranges from 1 to 1/60,000. But \cite{Kothe}
found in one experiment a ratio $\theta$ close to 1/8. To fix
the ideas, we suggest in the following to set $\theta=1/10$.

The value of $S_{kk}$ is considered as relatively low by the biological
community but we have not found any precise and trust-worthy proposition in the
literature.

The distribution $k \mapsto S_{kj}$ for $j\neq k$ needs to
be non-uniform for a biological reason. As is well known (\cite{Bebenek}, \cite{Ji}),
there exist mutationnal hot spots in the 
HIV virus. We may guess these hot spots will elicit non-uniform
distribution of the antigenicity during multiplication/mutation. The
precise value of the probability $k\mapsto S_{kj}$ is of course an
open problem.

   \bigskip  \newpage \noindent {\bf   3-3-1  \quad    Infectious virus}

\smallskip  
The infectious virus variation ($\dd V_j / \dd t$) depends on various
effects. 
  
\monitem The first and most complex is multiplication and
mutation. By wondering what takes place in the biology, we are going to determine the fields
present in the modeling term.

As multiplication takes place in the infected lymphocytes, if there
were no such lymphocytes ($U=0$), whatever might be the number of
(free) viruses, there would be no multiplication and the term would be
zero. This would almost occur in the end of the disease. More
precisely, as once a virus has infected a lymphocyte, no more viruses
may infect it \cite{Dern}, $U$ is a good measure of the total number
of infectious viruses. Note that in our model $V$ is the number of free
and infecting viruses. So there must be a dependance on $U$ (or $U_j$ at
this stage).

As the antigenicity of an infected lymphocyte has no link with the
antigenicity of the multiplicating virus, the involved field must be
$U$ (and not $U_j$).

As there are mutations from any antigenicity $k$, the $V_k$ mutate and
their offspring is made of either $V_j$ or $W_j$ with probability
$\theta S_{kj}$ or $(1-\theta)S_{kj}$ respectively. So there must be
$\sum_k S_{kj} \theta V_k$ in the modeling term.

What can be the modeling term ? Up to now, we have $U(\sum_k
S_{kj} \theta V_k)$ times an unknown term $A(t)$. Once summing over
the antigenicity, we must find $a \theta U$. So because of
(\ref{modele_eq8.25}), $A(t)$ is such that:
\moneqstar    
\sum_j \left( \frac{\dd V_j}{\dd t} \right)_{\mathrm{growth}}= \sum_j A(t) U \left( \sum_k
S_{kj}\theta V_k \right) = a\theta U \Rightarrow A(t) = \frac{a}{V},
\monendstar
 because $\sum_j S_{kj}=1$.
So our modeling term for multiplication with mutation to $V_j$ ($V_k
\rightarrow V_j$) is:
\moneq      \label{modele_eq10}
\left( \frac{\dd V_j}{\dd t} \right)_{\mathrm{growth}}= a  \frac{U}{V} \sum_{k} S_{kj} \theta V_k.
\monend

\monitem attack of the viruses by the lymphocytes of the same antigenicity
produced by the immune system:
\moneq     \label{modele_eq11}
\left( \frac{\dd V_j}{\dd t} \right)_{\mathrm{lymphocyte}}=-\xi_j V_j T_j.
\monend
Once compiled, the evolution equation is:
\moneq  \label{modele_eq12.5}
\Frac{\dd V_j}{\dd t} = \left( \frac{\dd V_j}{\dd t} \right)_{\mathrm{growth}}+\left(
\frac{\dd V_j}{\dd t} \right)_{\mathrm{lymphocyte}}= \displaystyle \frac{aU}{V} 
\sum_k S_{kj}\theta V_k-\xi_j V_j T_j.
\end{equation} 
%

   \bigskip  \noindent {\bf   3-3-2  \quad   Non-infectious viruses}

\smallskip   
The variation of non-infectious viruses ($\dd W_j/ \dd t$) may be modeled by
various effects and translated into various terms similar to
infectious ones:

\monitem 
mutation from an antigeniticy $k$ ($V_k$ with $V_j$ included) to $W_j$:
\moneqstar    
\left( \frac{\dd W_j}{\dd t} \right)_{\mathrm{growth}}= \displaystyle\frac{a U}{V}
\sum_{k} S_{kj} (1-\theta) V_k ;
\monendstar

\monitem 
attack of the virus by the lymphocytes of the very same antigenicity. 
Notice that as the immune system may not detect whether a virus is infectious 
or not the $\xi_j$ must be the same as for $V_j$:
\moneqstar     
\left( \frac{\dd W_j}{\dd t} \right)_{\mathrm{lymphocyte}}=-\xi_j \, W_j \, T_j.
\monendstar
Once compiled the evolution equation reads:
\moneq  \label{modele_eq15}
\Frac{\dd W_j}{\dd t}=\left( \frac{\dd W_j}{\dd t} \right)_{\mathrm{growth}}+\left(
\frac{\dd W_j}{\dd t} \right)_{\mathrm{lymphocyte}}= \displaystyle \frac{aU}{V} \sum_{k}
S_{kj} (1-\theta) V_k  -\xi_j \, W_j \, T_j.
\monend
As for $V_j$, the lymphocytes do attack only the virus of the same
antigenicity. Moreover, they cannot make a distinction between
infectious or non-infectious viruses.

\bigskip     \noindent {\bf \large 3-4   \quad  The dynamical system }

\smallskip   
When we collect equations (\ref{modele_eq5}), (\ref{modele_eq6}),
(\ref{modele_eq12.5}) and (\ref{modele_eq15}), our model reads finally
as said in \cite{Dubois07}:
\begin{eqnarray}
\label{modele_eq15.1}
\Frac{\dd T_j}{\dd t}& = & - \beta_j T_j+\gamma_j+C_j(V_j+W_j) \, T_j 
-\Frac{1}{\tau_j }J\left(\Frac{V}{T} \right) T_j ,\\
\label{modele_eq15.2}
\Frac{\dd U_j}{\dd t}& = & +\Frac{1}{\tau_j }J\left(\Frac{V}{T} \right) T_j -\alpha_j \, U_j, \\
\label{modele_eq15.3}
\Frac{\dd V_j}{\dd t} & = & \displaystyle \frac{ a\theta U }{V}\sum_{k} S_{kj} V_k  -\xi_j \, V_j \, T_j, \\
\label{modele_eq15.4}
\Frac{\dd W_j}{\dd t} & = & \displaystyle \frac{ a (1-\theta) U }{V}\sum_{k} S_{kj} V_k  -
\xi_j \, W_j \, T_j.
\end{eqnarray}
Very simple manipulation enable macroscopic laws to be proven:
\begin{eqnarray}
\nonumber
\Frac{\dd (V+W)}{\dd t}& =& \displaystyle a \, U -\sum_{j}\xi_j (V_j+W_j) \, T_j; \\
\label{modele_eq16} \,\,\, \,\,\, 
\Frac{\dd \Big[    (T+U+\sum_j \big(\frac{C_j}{\xi_j}(V_j+W_j) \big) \Big]}{\dd t} 
&=&-\sum_j \beta_j T_j
+\sum_j \gamma_j -\sum_j \alpha_j U_j +aU\sum_j \frac{C_j}{\xi_j}. 
\end{eqnarray}
Since we see no reason why $\xi_j$ or $C_j$ should depend on $j$, the
equation (\ref{modele_eq16}) could be considered as a simple
linear combination of the integrated versions of
(\ref{modele_eq15.1}-\ref{modele_eq15.4}). Such a law could be
experimentally checked. 

There remains the problem of initial conditions and the way they enter
into the evolution model with mutations. Indeed, in most ordinary
differential systems, as soon as a function is identically zero in a
subdomain, it remains so (Cauchy-Picard Theorem). Moreover, $10^{-20}$
or $10^{-40}$ are numerically very different while they both mean
``zero''. In simulations of mutation, we will need a quantic jump. All
this is postponed to a forthcoming article.

The dynamical system (\ref{modele_eq15.1}-\ref{modele_eq15.4})
depends on several parameters ($\alpha_j, \beta_j, ...$). The meaning
of some of them is clear and can be easily found either in the
literature of from simple biological data. In the latter case,
parameter identification based on a good methodology enables the
numerical resoution of an inverse problem. Such studies can be found
in \cite{Bortz_Nelson,Guedj_Thiebaut_Commenges,Samson_Lavielle_Mentre}.

\bigskip \bigskip  \noindent {\bf \Large 4) \quad  Mathematical properties}
  
\smallskip 
Starting from (\ref{modele_eq15.1}-\ref{modele_eq15.4}), we make the
various fields dimensionless by using the value of
$T_{\mathrm{equil}}=\gamma_j/\beta_j$ at equilibrium as a
characteristic value. Then we let $N=1$:
\moneqstar  
T=\frac{T_j}{T_{\mathrm{equil}}}, \quad U=\frac{U_j}{T_{\mathrm{equil}}}, \quad V=
\frac{V_j}{T_{\mathrm{equil}}}, \quad W=\frac{W_j}{T_{\mathrm{equil}}}.
\monendstar  
We also define dimensionless parameters:
\moneqstar  
\omega= C_j \, T_{\mathrm{equil}}, \quad \zeta=\xi_j \, T_{\mathrm{equil}}.
\monendstar  
With  these notations, the system reads:
\begin{eqnarray}
\label{sys_DLR_1}
\dt{T}{t} & = & \beta (1-T(t)) -\Frac{T(t)}{\tau} \JVT + \omega \, \big( 
V(t)+W(t) \big) \, T(t), \\
\label{sys_DLR_2}
\dt{U}{t} & = & \Frac{T(t)}{\tau} \JVT-\alpha \, U(t), \\
\label{sys_DLR_3}
\dt{V}{t} & = & a  \,   \theta \,U(t) - \zeta \, V(t) \, T(t), \\
\label{sys_DLR_4}
\dt{W}{t} & = & a \,(1-\theta) \,U(t) - \zeta \,W(t) \, T(t).
\end{eqnarray}
The unknown fields for (\ref{sys_DLR_1}-\ref{sys_DLR_4}) are
dimensionless whereas the time variable remains dimensioned (by days).
Since we assume there is only one antigenicity ($N=1$), mutation
disappears. Hereafter, we prove the solution remains admissible and
exists globally. Then we look for the fixed points and study their
stability.

The very elegant method of \cite{Leenheer_Smith} cannot apply to our
system since it is 4D fully coupled even in its simplified form
(\ref{sys_DLR_1}-\ref{sys_DLR_4}). Moreover their results (even for their
system (8)) do not apply to our system since our infection term is not
the mass-action. In addition, they do not study models where the
immune system produces $T$ because of infection (our $C(V+W)T$) and
their system (8) is not really coupled for $W$ like ours.

\bigskip     \noindent {\bf \large 4-1    \quad  The solution remains admissible}

\smallskip  
We intend to prove that the system (\ref{sys_DLR_1}-\ref{sys_DLR_4})
is mathematically well posed and biologically meaningful: it
does not exhibit negative values of the fields for
admissible initial conditions.
To that purpose, we make various assumptions on the parameters:
\moneq  \label{assumptions_DLR_1}
\left\{     \begin{array}{l} 
\beta, \tau, \omega, \alpha, \zeta \mbox{ are real positive and } 0< \theta < 1,\\
J(\smb) \mbox{ is a real function concave over } [0, \infty[, J(0)=0, J'(0)=1,\\
J(x) \rightarrow 1 \mbox{ when } x \rightarrow +\infty \,\, 
 \mbox{ and } \, J   \mbox{ is bounded on } \mathbb{R}^-.
\end{array} \right. \monend  
>From the biological meaning of the system, we define the set of
admissible fields:
\moneq  \label{Feq5}
\mathcal{B} = \{ (T,U,V,W), \,T>0, \,U \geq 0, \,V \geq 0, \,W \geq 0 \}, 
\monend 
and its interior:
\moneq  \label{Feq8}
\mathring{\mathcal{B}} = \big\{ (T,U,V,W), \,T>0, \,U > 0, V\, > 0,\, W > 0 \big\}.
\monend 
Due to the Cauchy-Picard theorem, we know that there exists
a local in time solution. The question is then whether this local
solution is admissible. Our main result is the following Theorem.

\begin{theorem}[Biological consistance]
\label{Fth1}
The solution $(T(t),U(t),V(t),W(t))$ of system
(\ref{sys_DLR_1}-\ref{sys_DLR_4}) with an initial condition in
$\mathcal{B}$ remains in $\mathcal{B}$.
\end{theorem}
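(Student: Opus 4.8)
The plan is to prove invariance of $\mathcal{B}$ by a boundary (sub-tangent, Nagumo-type) argument. Local existence of a solution is already guaranteed by the Cauchy--Picard theorem, so it suffices to show that whenever the trajectory touches a face of $\partial\mathcal{B}$ the vector field does not point outward. Concretely, I would introduce the exit time $t^\star=\sup\{t\ge 0:\ (T,U,V,W)(s)\in\mathcal{B}\ \text{for all } s\in[0,t]\}$ and argue by contradiction that at $t^\star$ no field can cross the boundary, forcing $t^\star$ to coincide with the maximal existence time. Because the three virus/infected fields and the field $T$ behave differently (the face $\{T=0\}$ is excluded from $\mathcal{B}$ and carries the singular term $J(V/T)$), I would treat them separately.

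For $U$, $V$ and $W$ the signs are favourable. First note that the hypotheses (\ref{assumptions_DLR_1}) force $J\ge 0$ on $[0,\infty[$: a concave function with $J(0)=0$ and $J(x)\to 1$ lies above the chord joining its endpoint values, so its infimum on $[0,\infty[$ is $0$. Hence, as long as $T>0$ and $V\ge 0$, the argument $V/T$ is nonnegative and $J(V/T)\ge 0$. The cleanest way to record positivity is through the integrating-factor representations obtained from (\ref{sys_DLR_2})--(\ref{sys_DLR_4}), for instance
\[
V(t)=e^{-\zeta\int_0^t T}\Big(V(0)+\int_0^t a\theta\,U(s)\,e^{\zeta\int_0^s T}\,ds\Big),
\]
together with the analogous formula for $W$ and the one for $U$, whose source term is $\tfrac{T}{\tau}J(V/T)\ge 0$. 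On $[0,t^\star]$ these formulas show that $U\ge 0$ forces $V,W\ge 0$, while $V\ge 0$ (with $T>0$) forces $U\ge 0$; a short bootstrap on this coupling, or equivalently the observations that $\dot U\ge 0$ on $\{U=0\}$, that $\dot V=a\theta U\ge 0$ on $\{V=0\}$ and that $\dot W=a(1-\theta)U\ge 0$ on $\{W=0\}$, rules out any of these three fields becoming negative.

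The genuinely delicate face is $\{T=0\}$, both because $T>0$ is a strict (open) requirement in $\mathcal{B}$ and because $J(V/T)$ is singular there. Here I would not evaluate the field on the face but instead produce a strictly positive lower bound for $T$. Using $J\le 1$ (which follows from (\ref{assumptions_DLR_1}), a bounded concave function with the stated normalisation; alternatively the tangent bound $J(x)\le x$ coming from concavity and $J'(0)=1$) one bounds the infection term by $\tfrac{T}{\tau}J(V/T)\le \tfrac{T}{\tau}$, and, dropping the nonnegative growth term $\omega(V+W)T$, equation (\ref{sys_DLR_1}) yields the differential inequality $\dot T\ge \beta-(\beta+\tfrac1\tau)T$. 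Comparison with the linear equation $\dot y=\beta-(\beta+\tfrac1\tau)y$, $y(0)=T(0)>0$, then gives $T(t)\ge \min\{T(0),\ \beta/(\beta+\tfrac1\tau)\}>0$ on the whole existence interval. This simultaneously keeps $T$ away from $0$ and guarantees that $J(V/T)$ stays finite and well defined, which is exactly what the $U,V,W$ estimates above require.

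I expect this interplay to be the main obstacle: one must organise the positivity of $U,V,W$ and the strict positivity of $T$ so that neither argument presupposes the other, and one must control the singular infection term as $T$ gets small. The clean resolution is the continuity/bootstrap step at $t^\star$: the lower bound on $T$ is self-contained (it only uses $V+W\ge 0$, hence only that the trajectory lies in $\overline{\mathcal{B}}$ up to $t^\star$), and once $T$ is bounded below the integrating-factor formulas close the loop for $U,V,W$. Combining the strict inequality $\dot T>0$ near $\{T=0\}$ with the nonnegative boundary derivatives of $U,V,W$ shows the trajectory cannot reach $\partial\mathcal{B}$, so $t^\star$ is not a genuine exit time and $(T,U,V,W)(t)\in\mathcal{B}$ throughout, which proves the theorem.
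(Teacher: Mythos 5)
Your proposal is correct in substance but follows a genuinely different route from the paper's. The paper first proves that $T$ stays positive (Lemma \ref{Fth2}) \emph{unconditionally}, by a contradiction at the first zero of $T$: at $T=0$ every term carrying a factor $T$ vanishes (boundedness of $J$ kills the singular term), so $\dot T=\beta>0$ there; it then runs a case analysis on how many initial fields vanish and on which of $U,V,W$ would vanish first (Lemmas \ref{Fth4}, \ref{Fprop1}, \ref{Fth3}, \ref{Fthpouet}). Your quantitative barrier $T(t)\ge \min\{T_0,\,\beta/(\beta+1/\tau)\}$, obtained from $J\le 1$ and comparison with the linear equation, is stronger than the paper's qualitative $T>0$, at the price of needing $V+W\ge 0$ and $V\ge 0$; you handle that dependence correctly by restricting to the pre-exit interval $[0,t^\star]$, and your uniform exit-time treatment of all boundary initial data is more economical than the paper's case-by-case lemmas. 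What each approach buys: the paper gets $T>0$ with no circularity at all, while you get a uniform lower bound for $T$ plus a single argument covering interior and boundary data alike.

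One step deserves sharpening. The non-strict sign conditions $\dot U\ge 0$ on $\{U=0\}$, $\dot V=a\theta U\ge 0$ on $\{V=0\}$, $\dot W\ge 0$ on $\{W=0\}$ are \emph{not} equivalent to the integrating-factor bootstrap and do not by themselves rule out exit through the corner $U=V=0$ (with $W>0$): there $\dot U=\frac{T}{\tau}J(0)=0$ and $\dot V=0$, so first-order signs say nothing. This tangential case is precisely where the paper deploys its one nontrivial device, Cauchy--Picard uniqueness applied to the time-reversed system, showing the trajectory would have had $U\equiv V\equiv 0$ on $[t^\star-\epsilon,t^\star]$, contradicting minimality of $t^\star$. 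Your variation-of-constants formulas \emph{can} close this corner, but only via the standard quasi-positivity argument (Gronwall on the negative parts of $U,V,W$, using $T$ bounded below and $J$ locally Lipschitz near $0$ --- a hypothesis implicit anyway in the Cauchy--Picard assumption, since (\ref{assumptions_DLR_1}) alone only gives $J$ bounded on $\mathbb{R}^-$). You should invoke that explicitly rather than presenting the boundary derivative signs as an equivalent substitute; with that repair your proof is complete and, in the corner case, arguably cleaner than the paper's.
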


\noindent 
To prove Theorem \ref{Fth1}, we will need various lemmas. The first one
states that the number of lymphocytes does not vanish in finite
time.

\begin{lemma}
\label{Fth2}
If the initial condition $(T_0,U_0,V_0,W_0)$ is in $\mathcal{B}$, 
then $T(t)>0$ for all time for which the fields $T,U,V,W$ are defined.
\end{lemma}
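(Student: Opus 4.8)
The plan is to argue by contradiction, exploiting the fact that the only potentially destabilizing term in (\ref{sys_DLR_1}), namely the infection term $-\frac{T}{\tau}J(V/T)$, vanishes as $T\to 0$ because $J$ is bounded. First I would record the elementary consequences of the hypotheses (\ref{assumptions_DLR_1}) on $J$. Since $J$ is concave on $[0,\infty[$ with $J(0)=0$ and $J'(0)=1$, its derivative is nonincreasing; and because $J(x)\to 1$, this derivative must stay nonnegative (otherwise $J$ would be driven to $-\infty$). Hence $J$ is nondecreasing and $0\le J(\xi)\le 1$ for every $\xi\ge 0$. Together with the assumed boundedness of $J$ on $\mathbb{R}^-$, this yields a single constant $M\ge 1$ with $|J|\le M$ everywhere, so that $|\frac{T}{\tau}J(V/T)|\le \frac{M}{\tau}\,T$ whenever $T>0$, regardless of the sign or magnitude of $V$.

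Next, by the Cauchy--Picard theorem a local solution exists and is $C^1$ on its maximal interval of existence; I would fix any finite time $t_1$ in that interval, on which $T,U,V,W$ are continuous and therefore bounded, say by a constant $C$. Suppose, for contradiction, that $T$ vanishes somewhere on $]0,t_1]$ and let $t^*$ be the first such time, so that $T(t)>0$ on $[0,t^*[$ (using $T_0>0$ from $\mathcal{B}$ in (\ref{Feq5})) and $T(t^*)=0$. Evaluating the right-hand side of (\ref{sys_DLR_1}) along $t\to t^{*-}$, the term $\beta(1-T)$ tends to $\beta$, the infection term is squeezed to $0$ by $|\frac{T}{\tau}J(V/T)|\le \frac{M}{\tau}T\to 0$, and the growth term $\omega(V+W)T$ tends to $0$ since $|V+W|\le 2C$ while $T\to 0$. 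Hence $\frac{dT}{dt}\to \beta>0$.

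It follows that $\frac{dT}{dt}>\beta/2>0$ on a left-neighborhood of $t^*$, so $T$ is strictly increasing there; but then $T(t)<T(t^*)=0$ for $t$ slightly below $t^*$, contradicting $T(t)>0$ on $[0,t^*[$. Therefore $T$ never reaches $0$, and $T(t)>0$ on the whole interval of existence, which is the claim.

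I expect the only delicate point to be the behaviour of the infection term as $T\to 0$: the quotient $V/T$ may be indeterminate (in particular when $V$ tends to $0$ as well), so one cannot argue via $J(V/T)\to 1$. The clean way around this is precisely the uniform bound $|J|\le M$ established at the outset, which forces $\frac{T}{\tau}J(V/T)\to 0$ without any control on $V/T$; note that this step uses only the boundedness of $V$ and $W$ on a compact time interval, not their nonnegativity — which is fortunate, since positivity of the viral fields has not yet been proved at this stage. A fully equivalent formulation would replace the contradiction by the differential inequality $\frac{dT}{dt}\ge \beta-(\beta+\tfrac{M}{\tau})T$ followed by a standard comparison with the associated linear equation; but that version also requires $\omega(V+W)T\ge 0$, i.e.\ $V+W\ge 0$, so I would favour the limit argument above, which sidesteps that dependence.
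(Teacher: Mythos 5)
Your proof is correct and follows essentially the same route as the paper's: both locate the first time $t^*$ at which $T$ vanishes, use the boundedness of $J$ to force the infection term $\frac{T}{\tau}J(V/T)$ to zero so that $\frac{\mathrm{d}T}{\mathrm{d}t}\to\beta>0$ there, and derive a contradiction with the minimality of $t^*$. If anything, your version is slightly more careful than the paper's, which evaluates the right-hand side directly at $t^*$ where $V/T$ is indeterminate; you instead take the left limit, control $\omega(V+W)T$ via boundedness of $V,W$ on a compact interval without invoking their not-yet-established nonnegativity, and replace the paper's intermediate-value step by a direct monotonicity contradiction.
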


\noindent $\bullet$ \quad  Proof of Lemma 8.  

\noindent 
Thanks to the Cauchy-Picard theorem \cite{Arnold}, we know that for
$t$ sufficiently small, $T(t)$ is positive. So if there exists at
least one time $\tilde{t}$ such that $T(\tilde{t})=0$, then we define
$t^*$ to be the smallest and we have $t^*>0$. Since $J(\smb)$ is bounded
(whatever $V$ and $T$), (\ref{sys_DLR_1}) writes at that time
\moneqstar  
\dt{T}{t}(t^*)= \beta >0.
\monendstar  

 \noindent 
As a consequence, for $t$ below and sufficiently close to $t^*$,
$T(t)<0$. But as $T_0>0$, from the intermediate value theorem, there
exists a $t'$ smaller than $t^*$ for which $T(t')=0$. This contradicts
the assumption that $t^*$ is the smallest and completes the proof.
\hfill $ \square $

\bigskip  \noindent 
We need now to study the various cases where the initial conditions
are either in $\mathring{\mathcal{B}}$ or on the boundary of
$\mathcal{B}$. This will be discussed through some lemmas where the
initial condition has either zero, three, two or one initial vanishing
fields.
In the case of initial condition in the interior of $\mathcal{B}$, one
may state the following Lemma.

\begin{lemma}[Zero vanishing initial condition]
\label{Fth4}
If the initial condition of system
(\ref{sys_DLR_1}-\ref{sys_DLR_4}) is in $\mathring{\mathcal{B}}$, then
the solution remains in $\mathring{\mathcal{B}}$ for any $t\geq 0$
provided it exists.
\end{lemma}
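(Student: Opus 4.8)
The plan is to exploit the special algebraic structure of the right-hand sides: in each of the four equations every negative contribution is proportional to the very field being differentiated, so each equation can be read as a scalar linear ODE for that field, driven by a source built from the other (a priori nonnegative) fields. Solving each such scalar equation by an integrating factor (Duhamel's formula) then exhibits each field as the sum of its initial value multiplied by a strictly positive exponential and a nonnegative integral term, which forces strict positivity as long as the corresponding initial datum is strictly positive.

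Concretely, I would first rewrite (\ref{sys_DLR_3}) as $\dt{V}{t} + \zeta T(t) V(t) = a\theta U(t)$ and, using the integrating factor $\exp(\int_0^t \zeta T(s)\,ds)$, obtain
$$
V(t) = V_0 \exp\!\Big(-\!\int_0^t \zeta T(s)\,ds\Big) + \int_0^t a\theta\,U(s)\,\exp\!\Big(-\!\int_s^t \zeta T(r)\,dr\Big)\,ds .
$$
The analogous representations hold for $W$ (source $a(1-\theta)U$), for $U$ (from (\ref{sys_DLR_2}), source $\frac{T}{\tau}\JVT$ and decay $e^{-\alpha t}$), and for $T$ (writing (\ref{sys_DLR_1}) as $\dt{T}{t} = \beta - g(t)T(t)$ with $g = \beta + \frac{1}{\tau}J(V/T) - \omega(V+W)$, whose source $\beta$ is strictly positive). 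All these integrating factors are finite on any interval of existence because $J$ is bounded by (\ref{assumptions_DLR_1}): a concave function with $J(0)=0$, $J'(0)=1$ and $J\to 1$ is nondecreasing, hence $0 \le J(V/T) \le 1$ whenever $V/T \ge 0$, so each exponential weight is a genuine strictly positive number and $g$ stays bounded.

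The main obstacle is the coupling: the source in the $V$-representation is nonnegative only if $U\ge 0$, the source in the $U$-representation only if $V\ge 0$, and so on, so one cannot conclude positivity of any single field without already controlling the others. I would resolve this with a maximal-interval (continuity) argument. Let $t^\ast$ be the supremum of times $\tau$ for which the solution stays in $\mathring{\mathcal{B}}$ on $[0,\tau)$; since $\mathring{\mathcal{B}}$ is open and the initial datum lies in it, $t^\ast>0$. Suppose, for contradiction, that $t^\ast$ is strictly smaller than the existence time. By continuity all fields are $\ge 0$ on $[0,t^\ast]$ and, by Lemma \ref{Fth2}, $T>0$ there; in particular $V/T\ge 0$, so $J(V/T)\ge 0$ and every source above is nonnegative on $[0,t^\ast]$. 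Evaluating the four Duhamel formulas at $t^\ast$ then gives $V(t^\ast)\ge V_0\exp(-\int_0^{t^\ast}\zeta T\,ds)>0$, and likewise $W(t^\ast)>0$, $U(t^\ast)\ge U_0 e^{-\alpha t^\ast}>0$, and $T(t^\ast)>0$. Hence $(T,U,V,W)(t^\ast)\in\mathring{\mathcal{B}}$, contradicting the definition of $t^\ast$ as the first exit time from the interior.

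Therefore $t^\ast$ coincides with the existence time, which is exactly the assertion that the solution remains in $\mathring{\mathcal{B}}$ as long as it is defined. The only genuinely delicate point is the bootstrap just described; the integrating-factor computations and the boundedness of $J$ are routine, so I would state them briefly and spend the detail on verifying that at the putative exit time $t^\ast$ all sources are still nonnegative, which is precisely what closes the contradiction.
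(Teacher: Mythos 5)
Your proof is correct, but it takes a genuinely different route from the paper's. The paper argues by contradiction at the first vanishing time $t^*$ and splits into cases according to which of $U$, $V$, $W$ vanish simultaneously at $t^*$: when a single field vanishes first (or the pairs $V,W$ or $U,W$), the corresponding equation yields a strictly positive derivative at $t^*$, forcing negative values just before $t^*$ and contradicting minimality; the delicate case where $U$ and $V$ vanish together (and the triple case) is handled by the Cauchy--Picard theorem with \emph{reversed} time, i.e.\ backward uniqueness, since $\{U=V=0\}$ carries an exact solution of the reduced system. Your Duhamel (variation-of-constants) representations dispense with this case analysis entirely: each field is exhibited as its initial value times a strictly positive exponential plus a nonnegative source integral, so \emph{all four} fields are simultaneously strictly positive at the putative exit time, and no separate treatment of simultaneous vanishing --- and no backward-uniqueness argument --- is needed. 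Your approach buys uniformity and quantitative lower bounds (e.g.\ $V(t)\ge V_0\exp\bigl(-\zeta\int_0^t T(s)\,\dd s\bigr)$), at the price of setting up the integral representations; the paper's is more elementary pointwise reasoning but needs the ad hoc reversed-time trick for the coupled case. Both rest on the same ingredients, which you verify: $T>0$ from Lemma \ref{Fth2}, and $0\le J\le 1$ on $[0,\infty)$ (your observation that concavity with $J(0)=0$ and $J\to 1$ forces $J$ nondecreasing, hence bounded between $0$ and its limit, is correct), which keep all coefficients continuous and the integrating factors finite and positive on compact intervals. Your exit-time bootstrap is sound as stated; the only step worth making fully explicit is the final contradiction: since $(T,U,V,W)(t^*)$ lies in the \emph{open} set $\mathring{\mathcal{B}}$ and the solution is continuous, it remains in $\mathring{\mathcal{B}}$ on $[t^*,t^*+\delta)$ for some $\delta>0$, contradicting the supremum property of $t^*$.
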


\noindent $\bullet$ \quad  Proof of Lemma 9.

\noindent 
We will discuss the cases where one, two or three fields vanish
simultaneously.

\monitem
Let us assume $W$ vanishes first and alone (before the other fields) and then
let us denote $t^*$ the smallest time for which $W$ vanishes. On
$[0,t^*]$, one has:
\moneq  
V(t) >0, \,\,U(t) >0, \,\,W(t)\geq 0, \,\,W(t^*)=0,
\monend  
in addition to the fact that $T(t)>0$ (see Lemma \ref{Fth2}). The
equation (\ref{sys_DLR_4}) writes 
$ \frac{\dd W}{\dd t} =  a \, (1-\theta) \, U(t^*) >0$
because $0< \theta <1$ (see the assumption
(\ref{assumptions_DLR_1})). So, there exists a time $\tilde{t}$ smaller
than $t^*$ at which $W(\tilde{t})<0$. From the intermediate
value theorem, one may conclude that there exists a time smaller than
$t^*$ (and than $\tilde{t}$) at which $W$ vanishes. This contradicts
the definition of $t^*$ and so $W$ may not vanish first.
 
Identical arguments enable to prove that $V$ may not vanish first.
To prove that $U$ may not vanish first neither, we assume that $t^*$
is the first vanishing time of $U$. So, on $[0,t^*]$:
\moneq  \label{Feq30}
U(t)\geq 0, \,\, U(t^*)=0, \,\,  V(t)>0,\,\,  W(t)>0.
\monend  
Thanks to Lemma \ref{Fth2}, one has $T(t^*)>0$ and so
$ \frac{\dd U}{\dd t} (t^*)=J( \frac{V}{T} ) \,  \frac{T(t^*)}{\tau}  >0$. 
In a similar way to the two
previous cases, one gets a time $\tilde{t}$ smaller than $t^*$ for
which $U(\tilde{t})<0$. Thanks to the intermediate value theorem, one
gets also a time smaller than $t^*$ where $U$ vanishes. This
contradicts the definition of $t^*$. So $U$ may not vanish first
neither.

\monitem
Let us discuss now the case where two fields vanish simultaneously.
The case where $V$ and $W$ vanish simultaneously and alone is
impossible for the same arguments as the case where $W$ vanishes
first.
The case where $U$ and $W$ vanish simultaneously and alone may be
treated in the same way as the case of $U$ vanishing first.
The only remaining case is if $U$ and $V$ vanish simultaneously (and
not $W$). Let us then define $t^*$ the smallest such vanishing
time. The Cauchy-Picard theorem, applied to the system
(\ref{sys_DLR_1}-\ref{sys_DLR_4}) with reversed time, enables us to
claim that the (unique) solution is also such that
\moneqstar  
\dt{T}{t} =  \beta (1-T(t)) + \omega W(t)T(t), \quad
\dt{W}{t} = - \zeta W(t) T(t),  \quad   U(t) = V(t) = 0, 
\monendstar
for all time $t$ in $[t^* - \epsilon, \,t^*]$ for some $\epsilon > 0.$
 This contradicts the definition of $t^*$ as the smallest vanishing time.  

\monitem In the case where $U,V$ and $W$ vanish at the same time, the
proof is the same as for Lemma \ref{Fprop1}.
\hfill $ \square $

\bigskip  \noindent 
The following Lemma solves the case of three vanishing initial fields.

\begin{lemma}[Three vanishing initial conditions]
\label{Fprop1}
If the initial condition is $T_0>0, U_0=V_0=W_0=0$, then the solution is unique and is health.
\end{lemma}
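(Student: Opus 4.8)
The plan is first to exhibit an explicit solution and then to argue it is the only one. For the candidate I set $U(t)\equiv V(t)\equiv W(t)\equiv 0$ and let $T$ solve the scalar equation $\dt{T}{t}=\beta(1-T)$ with $T(0)=T_0$, whose solution $T(t)=1+(T_0-1)e^{-\beta t}$ stays strictly positive and tends to $1$. I would check that this quadruple solves the full system (\ref{sys_DLR_1}-\ref{sys_DLR_4}): since $T>0$ the ratio $V/T=0$ is well defined, and the hypothesis $J(0)=0$ from (\ref{assumptions_DLR_1}) makes the infection term $\Frac{T}{\tau}\JVT$ vanish identically, while $U=V=W=0$ annihilates the growth and attack terms in the remaining three equations. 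Thus every right-hand side reduces correctly, the quadruple is an admissible solution lying in $\mathcal{B}$, and this is exactly the state I call \emph{health}.

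The core of the lemma is uniqueness, and this is where I would concentrate the effort. Because $T_0>0$, on a small time interval the component $T$ stays in a compact subinterval of $(0,\infty)$, bounded away from the singular value $T=0$. Consequently the only non-smooth ingredient of the vector field, namely $g(T,V):=\Frac{T}{\tau}J(V/T)$, has its argument $V/T$ confined near $0$; on the admissible side $V\ge 0$ the concavity of $J$ on $[0,\infty[$ together with the normalisation $J'(0)=1$ gives $0\le J(s)\le s$ and local Lipschitz continuity, so $g$ and hence the whole right-hand side of (\ref{sys_DLR_1}-\ref{sys_DLR_4}) is locally Lipschitz on $\mathcal{B}$ near $(T_0,0,0,0)$. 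The Cauchy-Picard theorem then provides a unique local solution within $\mathcal{B}$, which must coincide with the health solution constructed above; since that solution never leaves $(0,\infty)\times\{0\}^3$, it extends to all $t\ge 0$ by Lemma \ref{Fth2} and the solution is globally health.

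The one genuinely delicate point---and the step I expect to be the main obstacle---is the regularity of $J$ at the corner of $\mathcal{B}$: assumption (\ref{assumptions_DLR_1}) supplies only boundedness of $J$ on $\mathbb{R}^-$, so an a priori competing solution dipping to $V<0$ would make $g$ order one rather than small, and a naive Gronwall estimate would fail to keep it on the invariant set. To close this rigorously without strengthening the hypotheses I would reuse the reversed-time device already exploited in Lemma \ref{Fth4}: the set $\{U=V=W=0\}$ is invariant for the reduced dynamics, on which the system collapses to the single smooth equation $\dt{T}{t}=\beta(1-T)$, and uniqueness of the full system on the region $\{T>0,\,V\ge 0\}$, where $g$ is Lipschitz, forbids any trajectory from leaving this invariant set. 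Alternatively, reading the hypotheses as $J\in C^1$ in a full neighbourhood of $0$ makes the Lipschitz bound two-sided and renders the direct Cauchy-Picard argument complete. Either way, once local Lipschitzness is secured the conclusion is immediate, and the extension to all times is routine because the explicit $T(t)$ never approaches the singularity $T=0$.
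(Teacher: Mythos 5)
Your proposal is correct and follows essentially the same route as the paper: the paper's proof likewise consists of exhibiting the explicit health solution $T(t)=1-(1-T_0)\exp(-\beta t)$, $U\equiv V\equiv W\equiv 0$, and invoking the Cauchy--Picard theorem for uniqueness. The only difference is one of care: the paper's one-line proof takes the applicability of Cauchy--Picard for granted, whereas you verify the local Lipschitz property of the right-hand side on $\mathcal{B}$ via the concavity of $J$ (which, with $J(0)=0$ and $J'(0)=1$, indeed gives $0\le J(s)\le s$ and a one-sided Lipschitz bound) and explicitly flag that the stated hypotheses control $J$ only by boundedness on $\mathbb{R}^-$ --- a genuine subtlety at the boundary point $V=0$ that the paper silently passes over and that your reversed-time or strengthened-regularity patch legitimately closes.
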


\noindent $\bullet$ \quad  Proof of Lemma 10.

\noindent 
The proof relies only on the Cauchy-Picard theorem which states that
\moneqstar   
T(t)=1-(1-T_0)\exp{(-\beta t)}, 
\monendstar
and the other fields identically zero.
\hfill $ \square $

\bigskip  \noindent  
The next Lemma deals with the case where two initial fields vanish.

\begin{lemma}[Two vanishing initial conditions]
\label{Fth3}
If the initial condition is among
\begin{eqnarray}
\label{Feq10}
 & & T_0>0, \,\, U_0=V_0=0,  \,\, W_0>0, \\
\label{Feq25}
 & & T_0>0,  \,\, U_0>0,  \,\,  V_0 = W_0 = 0, \\
\label{Feq26}
 & & T_0>0,  \,\,U_0=0,  \,\, V_0>0,  \,\, W_0=0,
\end{eqnarray}
the solution of system (\ref{sys_DLR_1}-\ref{sys_DLR_4}) 
remains in $\mathcal{B}$.
\end{lemma}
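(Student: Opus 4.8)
The plan is to treat the three initial configurations (\ref{Feq10}), (\ref{Feq25}) and (\ref{Feq26}) separately. The recurring idea is that either the degenerate fields stay identically zero, in which case a reduced subsystem keeps the state in $\mathcal{B}$, or they become strictly positive immediately, so that the trajectory enters the open set $\mathring{\mathcal{B}}$ and Lemma \ref{Fth4} takes over. Throughout I use that $J(0)=0$ from (\ref{assumptions_DLR_1}) and that $T(t)>0$ from Lemma \ref{Fth2}.

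For the configuration (\ref{Feq10}) ($T_0>0$, $U_0=V_0=0$, $W_0>0$) I would first check that the ansatz $U\equiv V\equiv 0$ is consistent: since $J(0)=0$, equation (\ref{sys_DLR_2}) reduces to $\dt{U}{t}=-\alpha U$ and (\ref{sys_DLR_3}) to $\dt{V}{t}=-\zeta V T$, both satisfied by the zero function with the given data. By the uniqueness part of the Cauchy--Picard theorem, $U(t)=V(t)=0$ as long as the solution exists, and the remaining pair obeys $\dt{T}{t}=\beta(1-T)+\omega W T$ together with $\dt{W}{t}=-\zeta W T$. Integrating the latter gives $W(t)=W_0\exp(-\zeta\int_0^t T\,ds)>0$ because $W_0>0$ and $T>0$; hence the state stays in $\mathcal{B}$.

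For the configuration (\ref{Feq25}) ($T_0>0$, $U_0>0$, $V_0=W_0=0$) I would evaluate the velocities of the two vanishing fields at the origin. From (\ref{sys_DLR_3}) and (\ref{sys_DLR_4}), $\dt{V}{t}(0)=a\theta U_0>0$ and $\dt{W}{t}(0)=a(1-\theta)U_0>0$, since $0<\theta<1$ and $U_0>0$. Thus $V$ and $W$ are strictly increasing at $0$, so $V(t)>0$ and $W(t)>0$ on some interval $(0,\delta)$, while $U$ stays positive there by continuity and $T>0$ by Lemma \ref{Fth2}. The trajectory therefore lies in $\mathring{\mathcal{B}}$ at some time $t_1\in(0,\delta)$; applying Lemma \ref{Fth4} from $t_1$ onward (the system being autonomous) keeps it in $\mathring{\mathcal{B}}\subset\mathcal{B}$, and on $[0,t_1]$ it is in $\mathcal{B}$ by construction.

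The configuration (\ref{Feq26}) ($T_0>0$, $U_0=0$, $V_0>0$, $W_0=0$) is the delicate one, and I expect it to be the \emph{main obstacle}. Here $V_0>0$ and $T_0>0$ give $J(V_0/T_0)>0$, positivity of $J$ on $(0,\infty)$ being a consequence of the hypotheses in (\ref{assumptions_DLR_1}); hence (\ref{sys_DLR_2}) yields $\dt{U}{t}(0)=\frac{T_0}{\tau}J(V_0/T_0)>0$, so $U$ becomes strictly positive for small $t>0$, and $V$ stays positive near $0$ by continuity. The trouble is $W$: since $W_0=0$ and $U_0=0$ one only gets $\dt{W}{t}(0)=0$, so $W$ does not obviously leave the boundary. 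I would instead read (\ref{sys_DLR_4}) as the linear equation $\dt{W}{t}+\zeta T W=a(1-\theta)U$ and use the integrating factor $\exp(\zeta\int_0^t T\,ds)$ to obtain $W(t)=\exp(-\zeta\int_0^t T\,ds)\int_0^t a(1-\theta)U(s)\exp(\zeta\int_0^s T\,d\sigma)\,ds$, which is strictly positive for $t>0$ once $U>0$ on $(0,t)$. Consequently all four fields are positive on some $(0,\delta)$, the trajectory enters $\mathring{\mathcal{B}}$, and Lemma \ref{Fth4} again closes the argument.
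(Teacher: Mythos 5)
Your proposal is correct and follows essentially the same route as the paper: the same three-case split, reduction to the $(T,W)$ subsystem via Cauchy--Picard uniqueness in case (\ref{Feq10}), and pushing the trajectory into $\mathring{\mathcal{B}}$ so that Lemma \ref{Fth4} takes over in cases (\ref{Feq25}) and (\ref{Feq26}). The only (harmless) technical variants are that in case (\ref{Feq10}) you integrate the linear $W$-equation exactly, where the paper argues by contradiction using a Gr\"onwall-type lower bound $W(t)\geq W_0\exp(-\zeta\gamma t)$, and in case (\ref{Feq26}) you use a variation-of-constants formula for $W$, where the paper instead observes that $\frac{\dd W}{\dd t}(0)=0$ but $\frac{\dd^2 W}{\dd t^2}(0)=a\,(1-\theta)\,\frac{\dd U}{\dd t}(0)>0$.
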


\noindent $\bullet$ \quad  Proof of Lemma 11.

\noindent The proof distinguishes various cases.

\monitem
In the case (\ref{Feq10}), the Cauchy-Picard theorem enables us to
claim that there exists a solution of
(\ref{sys_DLR_1}-\ref{sys_DLR_4}) for which $U(t)=V(t)=0$, and the
fields $T(t),W(t)$ satisfy
\begin{eqnarray}
\label{Feq11}
\dt{T}{t} & = & \beta (1-T(t)) + \omega \, W(t)\,T(t) \\
\label{Feq12}
\dt{W}{t} & = & - \zeta \,W(t) \,T(t).
\end{eqnarray}
Let us assume $W$ vanishes at some times. Among these times, we
chose $t^*$ to be the smallest. On the compact set $[0,t^*]$, $T$ is
continue and so bounded by $\gamma >0$. So (\ref{Feq12}) enables to claim
\moneqstar   
\dt{W}{t}  \geq - \gamma \zeta W  \quad {\rm then }  \quad
 W(t) \geq W_0 \exp{(- \zeta \gamma t)}  \quad {\rm for } \,\, t \geq 0
\monendstar 
which contradicts the assumption that $W(t^*)=0$. So $W$ may not
vanish in finite time and so in the case (\ref{Feq10}), $W(t)>0$ for
any positive time and $U=V=0$.

\monitem
In the case (\ref{Feq25}), $\frac{\dd V}{\dd t} > 0$ 
and so $V(t)>0$ for $t$
small enough. Similarly, for $t$ small enough, $W(t)>0$. So for $t$
small enough, the solution enters the interior of $\mathcal{B}$. Such
a new initial condition has been adressed in Lemma \ref{Fth4}.
So the solution remains in $\mathcal{B}$.

\monitem
In the case (\ref{Feq26}), 
$\frac{\dd U}{\dd t} > 0$ thanks to
(\ref{sys_DLR_2}), and so $U(t)>0$ for $t$ sufficiently small. Indeed,
$\frac{\dd W}{\dd t} = 0$
but  $\frac{\dd^2 W}{\dd t^2} =  a \, (1-\theta)  \, \frac{\dd U}{\dd t} > 0 .$ 
This is
enough to assess that $W(t)>0$ for $t$ sufficiently small. Using the
new ``initial'' condition at this time, we are driven back
to the case treated by Lemma \ref{Fth4}.
This completes the proof.
\hfill $ \square $

\bigskip  \noindent  

\begin{lemma}[One vanishing initial condition]
\label{Fthpouet}
If one and only one initial field vanishes, the solution remains 
in $\mathcal{B}$.
\end{lemma}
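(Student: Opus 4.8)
The plan is to reduce every instance of this Lemma to the already-proven Lemma \ref{Fth4} by showing that the single vanishing field becomes strictly positive immediately, so that the trajectory enters the open set $\mathring{\mathcal{B}}$ for arbitrarily small positive times. First I would invoke Lemma \ref{Fth2}: since $T(t)>0$ for all admissible times, the field vanishing at $t=0$ is necessarily one of $U$, $V$, $W$. This splits the argument into exactly three cases, namely $U_0=0$, $V_0=0$, and $W_0=0$, with the other three fields strictly positive in each case. In each case the strategy is the same: compute the derivative of the vanishing field at $t=0$, show it is strictly positive, conclude that the field becomes positive for small $t>0$, and then apply Lemma \ref{Fth4}.

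The two cases $V_0=0$ and $W_0=0$ are routine and essentially copy the reductions already performed in Lemma \ref{Fth3}. In the case $V_0=0$ (with $T_0,U_0,W_0>0$), equation (\ref{sys_DLR_3}) gives $\dt{V}{t}(0)=a\,\theta\,U_0-\zeta V_0 T_0 = a\,\theta\,U_0>0$, so $V(t)>0$ for small $t>0$; in the case $W_0=0$, equation (\ref{sys_DLR_4}) gives $\dt{W}{t}(0)=a\,(1-\theta)\,U_0>0$ because $0<\theta<1$. In both situations the remaining fields stay positive near $t=0$ by continuity, so the solution enters $\mathring{\mathcal{B}}$ and Lemma \ref{Fth4} finishes the argument.

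The case $U_0=0$ (with $T_0,V_0,W_0>0$) is the only one requiring care, and I expect the sign of $J$ to be the main obstacle. Here equation (\ref{sys_DLR_2}) gives $\dt{U}{t}(0)=\frac{T_0}{\tau}\,J\!\left(V_0/T_0\right)$, so I must establish that $J$ is strictly positive on $(0,\infty)$. This follows from the structural assumptions (\ref{assumptions_DLR_1}): concavity of $J$ together with $J(0)=0$ yields $J(\lambda x)\geq \lambda J(x)$ for $\lambda\in[0,1]$, whence $x\mapsto J(x)/x$ is nonincreasing with limit $J'(0)=1$ as $x\to 0^+$; if $J(x_0)=0$ held for some $x_0>0$, then $J(x)\leq 0$ for all $x\geq x_0$, contradicting $J(x)\to 1$. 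Hence $J(V_0/T_0)>0$, so $\dt{U}{t}(0)>0$ and $U(t)>0$ for small $t>0$. The trajectory again enters $\mathring{\mathcal{B}}$, Lemma \ref{Fth4} applies, and this exhausts all cases, completing the proof.
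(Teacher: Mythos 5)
Your proposal is correct and follows essentially the same route as the paper's own proof: case-splitting on which of $U$, $V$, $W$ vanishes (using Lemma \ref{Fth2} for $T$), showing the vanishing field has strictly positive derivative at $t=0$, and reducing to Lemma \ref{Fth4}. Your additional verification that $J>0$ on $(0,\infty)$ via concavity, $J(0)=0$ and $J(x)\to 1$ is a valid filling-in of a detail the paper leaves implicit when asserting $\frac{\dd U}{\dd t}(0)>0$.
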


\noindent $\bullet$ \quad  Proof of Lemma 12.

\noindent If $U_0=0$ (and $V_0 \, W_0 >0$), then  $\frac{\dd U}{\dd t} (0) = \frac{1}{\tau}
 T \, J(\frac{V}{T}) > 0$. 
One may conclude in a similar way to the case
(\ref{Feq26}) treated in Lemma \ref{Fth3}.
If $V_0=0$ (and $U_0 \, W_0 >0$), then  $\frac{\dd V}{\dd t} (0) = a  \, \theta \, U_0>0$. 
So in finite time, one is driven back to the case treated
in Lemma \ref{Fth4} (zero vanishing intitial condition).
If $W_0=0$ (and $U_0 \, V_0 >0$), the argument is very similar.
\hfill $ \square $

\bigskip  \noindent  

\noindent $\bullet$ \quad  Proof of Theorem 7. 

\noindent 
Up to now, we have proved that if the initial condition does not vanish 
(Lemma \ref{Fth4}), vanishes three times (Lemma \ref{Fprop1}), 
two times (Lemma \ref{Fth3}), or once (Lemma \ref{Fthpouet}), 
the solution remains in $\mathcal{B}$. 
This completes the proof of Theorem \ref{Fth1}.
\hfill $ \square $

\bigskip   \newpage   \noindent {\bf \large 4-2    \quad  Global existence}

\noindent 
The following Theorem states that the solution remains finite and so
is global in $t$.

\begin{theorem}
\label{Fth5}
Let 
\moneq  \label{eta}
\eta \,=\, a \, \omega  -  \alpha \, \zeta,
\monend
and 
\moneq   \label{gamma}
\gamma \,=\,  \Bigl|   \frac{\eta}{\zeta}   \Bigr|  \, . 
\monend
If the initial condition
$(T_0, \,U_0, \,V_0, \,W_0)$ is in $\mathcal{B}$, then the solution of
(\ref{sys_DLR_1}-\ref{sys_DLR_4}) satisfies:
\moneq  \label{casfort}
T(t)+U(t)+\Frac{\omega}{\zeta}(V(t)+W(t))  \, \leq  \, 
  T_0+U_0+ \Frac{\omega}{\zeta}(V_0+W_0)  \, + \, \beta \, t \qquad {\rm if } 
\quad  \eta \leq  0 ,
\monend
 and 
\moneq  \label{Feq34}
T(t)+U(t)+\Frac{\omega}{\zeta}(V(t)+W(t))  \, \leq  \, 
 \big( T_0+U_0+ \Frac{\omega}{\zeta}(V_0+W_0) \big) 
\, {\rm e}^{\displaystyle  \gamma t} 
 \,+\,  \frac{\beta}{\gamma} \, 
\big( {\rm e}^{\displaystyle  \gamma t}   -1 \big)  
\quad {\rm if }  \,  \eta >  0 . 
\monend
As a consequence, the solution is finite for all time
$t\in [0,+\infty[$ and so is global.
\end{theorem}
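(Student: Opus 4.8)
The plan is to study the single scalar quantity
\[
\Phi(t) = T(t)+U(t)+\Frac{\omega}{\zeta}\big(V(t)+W(t)\big),
\]
whose weights $1,1,\omega/\zeta,\omega/\zeta$ are chosen precisely so that every nonlinear contribution cancels. First I would differentiate $\Phi$ along the flow of (\ref{sys_DLR_1}-\ref{sys_DLR_4}). The two terms $\pm \Frac{T}{\tau}\JVT$ coming from $\dt{T}{t}$ and $\dt{U}{t}$ cancel, and the growth term $\omega(V+W)T$ from the $T$ equation cancels exactly against the predation terms $-\omega V T$ and $-\omega W T$ produced by $\Frac{\omega}{\zeta}\dt{V}{t}$ and $\Frac{\omega}{\zeta}\dt{W}{t}$. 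What remains collapses to
\[
\dt{\Phi}{t} = \beta\big(1-T(t)\big) + \frac{\eta}{\zeta}\,U(t),
\]
where $\eta = a\omega - \alpha\zeta$ is exactly the combination defined in (\ref{eta}). This identity is the heart of the argument, and identifying the correct weights is the only genuinely delicate point.

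Next I would invoke Theorem \ref{Fth1} (biological consistence), which guarantees $T(t)>0$, $U(t)\geq 0$, $V(t)\geq 0$, $W(t)\geq 0$ for as long as the solution exists. Since $T(t)>0$, one has $\beta(1-T(t)) \leq \beta$ at once. In the case $\eta \leq 0$, the sign of $U\geq 0$ forces $\frac{\eta}{\zeta}U \leq 0$, hence $\dt{\Phi}{t} \leq \beta$, and a direct integration gives estimate (\ref{casfort}).

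In the case $\eta > 0$, I would bound the surviving term by $\Phi$ itself: because every summand of $\Phi$ is nonnegative (using $\omega/\zeta>0$ together with the positivity from Theorem \ref{Fth1}), we have $U(t) \leq \Phi(t)$, whence $\frac{\eta}{\zeta}U \leq \gamma\,\Phi$ with $\gamma = |\eta/\zeta| = \eta/\zeta$ as in (\ref{gamma}). This yields the linear differential inequality $\dt{\Phi}{t} \leq \beta + \gamma\,\Phi$, whose integration (a Gr\"onwall argument) produces exactly (\ref{Feq34}).

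Finally, for global existence I would argue by the standard continuation criterion. On any bounded time interval both estimates show $\Phi(t)$ is finite; and since $0 < T \leq \Phi$, $0 \leq U \leq \Phi$, and $0 \leq V, W \leq \Frac{\zeta}{\omega}\Phi$, each field remains bounded on that interval. A local Cauchy-Picard solution that stays in a bounded set cannot blow up in finite time, so it extends to all $t \in [0,+\infty[$, which is the claimed global existence. The algebraic cancellation in the first step is the crux; once it is carried out, the comparison and the continuation argument are routine.
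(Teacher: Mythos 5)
Your proposal is correct and takes essentially the same route as the paper: the same weighted combination $\Phi=T+U+\frac{\omega}{\zeta}(V+W)$ cancels all nonlinear terms to give $\frac{\dd \Phi}{\dd t}=\beta(1-T)+\frac{\eta}{\zeta}U$, and the positivity from Theorem \ref{Fth1} then settles both cases exactly as in the paper. The only (cosmetic) difference is in the case $\eta>0$, where you apply Gr\"onwall directly to the differential inequality $\frac{\dd \Phi}{\dd t}\le \beta+\gamma\Phi$, while the paper first integrates and runs the Gr\"onwall argument on $\phi(t)=\int_0^t U(t')\,\dd t'$ before substituting back — both yield the identical bound (\ref{Feq34}).
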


\noindent $\bullet$ \quad  Proof of Theorem 13. 

\noindent By adding (\ref{sys_DLR_1}-\ref{sys_DLR_2}) and $
\frac{\omega}{\zeta} $ times the sum of (\ref{sys_DLR_3}) and
(\ref{sys_DLR_4}), all the non-linear terms disappear and one has:
\moneq  \label{Feq35}
\dt{T}{t}+\beta T+\dt{U}{t} -\frac{\eta}{\zeta} \, U +
 \Frac{\omega}{\zeta} \, \dt{}{t} (V+W) = \beta.
\monend
Thanks to Theorem \ref{Fth1}, the solution remains in
$\mathcal{B}$ and so $ U\geq 0 $. 
Moreover if   $ \, \eta \leq 0 $, 
we can minorate the fourth term of  (\ref{Feq35})  by $0$
because $ \zeta > 0 $. As a consequence, 
\moneqstar   
\dt{}{t} \Big(   T+U+\Frac{\omega}{\zeta}(V+W) \Big) \, \leq \, \beta, 
\monendstar
and the relation  (\ref{casfort}) is a simple consequence of the integration in time of
the previous inequality.

\noindent 
If   $ \, \eta > 0 $, we integrate 
 between $0$ and $t$ the equation (\ref{Feq35}), 
and owing to the positivity of $T$, one has
\moneq  \label{Feq36}
T+U+\Frac{\omega}{\zeta}(V+W) \leq 
T_0+U_0+\Frac{\omega}{\zeta}(V_0+W_0) + \beta t + \gamma \int_0^t U(t') \, \dd t'.
\monend
Let us denote 
\moneqstar  
\delta_0 \equiv T_0+U_0+\Frac{\omega}{\zeta}(V_0+W_0) \quad {\rm and} \quad 
\phi(t) \equiv \int_0^t U(t') \, \dd t' .   
\monendstar
Due to Theorem \ref{Fth1} we know that the solution
remains in $\mathcal{B}$ ($T>0, V\geq 0, W\geq0$), the
equation (\ref{Feq36}) enables to write:
\begin{eqnarray}
\Frac{\dd \phi}{\dd t}  & \leq &  \delta_0 + \beta t + \gamma \phi(t), \nonumber \\
 \label{Feq39} 
\Frac{\dd}{\dd t} (\exp{(-\gamma t)} \phi)  & \leq &   \exp{(-\gamma t)} \, 
(\delta_0 + \beta t).
\end{eqnarray} 
The  inequality  (\ref{Feq39}) can be integrated ($\phi(0)=0$):
\moneqstar    
\exp{(-\gamma t)}\phi(t) \leq \left[-\Frac{\beta}{\gamma}t
-\Frac{1}{\gamma}\left(\delta_0+\Frac{\beta}{\gamma}\right) \right]
\exp{(-\gamma t)}+ \Frac{1}{\gamma} \left(\delta_0+\Frac{\beta}{\gamma}\right),
\monendstar
or
\moneqstar   
\gamma \phi(t) \, \leq \, - \left( \beta t + \delta_0 +  \frac{\beta}{\gamma} \right)
 \,+\, \left( \delta_0   \,+\,  \frac{\beta}{\gamma}  \right)\,  \exp{(+\gamma t)}.
\monendstar
With such a bound for $\phi$, one may take back the right hand side of (\ref{Feq36}):
\moneqstar  
T+U+\Frac{\omega}{\zeta}(V+W) 
\, \leq  \,  \delta_0 +\beta t + \gamma \phi  
\, \leq  \, \delta_0 \, \exp{(\gamma t)}  +  
 \frac{\beta}{\gamma} \,  (\exp{(\gamma t)}-1),
\monendstar
which is the inequality (\ref{Feq34}). This completes the proof.
\hfill $ \square $ 

\bigskip     \noindent {\bf \large 4-3    \quad  Fixed points}

\smallskip   
Looking for fixed points of (\ref{sys_DLR_1}-\ref{sys_DLR_4}), we must
find solutions of the associated stationary system:
\moneq  \label{fix_DLR_1}
\begin{array}{rcl}
0 & = & \beta (1-T) -\Frac{T}{\tau} J\left( \Frac{V}{T} \right) + \omega \, \big( 
V+W \big) \, T, \\
\Frac{T}{\tau}\,  J\left( \Frac{V}{T} \right)  & = & \alpha \, U, \\
a  \,   \theta \,U & = & \zeta \, V \, T, \\
a \,(1-\theta) \,U & = & \zeta \,W \, T.
\end{array} \monend  
We will prove the following property:
\begin{proposition}
\label{prop14}
Let $\eta$ defined in (\ref{eta}) and
\moneq  \label{HP0}
\begin{array}{rcl}
\rho & = & \Frac{\alpha \zeta \tau }{a \theta},\\
\overline{V} & = & \Frac{\beta a \theta }{a \omega -\alpha \zeta}.
\end{array} \monend 
The fixed points of (\ref{sys_DLR_1}-\ref{sys_DLR_4}) 
depend on the sign of $\eta$. Three cases must be distinguished. 

\monitem If $\eta > 0$, the fixed points are either health and
seropositivity (if $\rho <1$), or only health (if $\rho \geq 1$).

\monitem If $\eta=0$, there is no fixed point else than health.

\monitem If $\eta <0$, health is always a solution. Moreover, there
appears a non-explicit threshold value $L$ and three sub-cases
depending on $L$: 

\sonitem When $\rho<1$ there is one seropositivity fixed point. 

\sonitem When $1 < \rho < L$, there are two seropositivity fixed points. 

\sonitem When $\rho > L$ there is no seropositivity solution.  
\end{proposition}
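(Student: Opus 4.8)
The plan is to check that health is always stationary and then to reduce the search for a non-health (\emph{seropositivity}) fixed point to an intersection problem for two explicit curves in the $(T,V)$ quadrant, whose number of intersections is governed by the sign of $\eta$ and by the position of $\rho$ relative to $1$ and to the threshold $L$. First I would verify directly that $(T,U,V,W)=(1,0,0,0)$ solves (\ref{fix_DLR_1}): since $J(0)=0$, every right-hand side vanishes. For a seropositivity I assume $T>0$ and $(U,V,W)\neq 0$. The last two equations of (\ref{fix_DLR_1}) give $V=\frac{a\theta U}{\zeta T}$ and $W=\frac{a(1-\theta)U}{\zeta T}$, hence $V+W=\frac{aU}{\zeta T}$; in particular, for $T>0$ the condition $V>0$ already forces $U>0$ and $W>0$, so admissibility reduces to $T>0,\ V>0$.

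Next I would use the second stationary equation $\frac{T}{\tau}J(V/T)=\alpha U$ to eliminate the nonlinear term in the first one, which collapses it to $\beta(1-T)+\frac{\eta}{\zeta}U=0$; this already reveals $\operatorname{sign}(T-1)=\operatorname{sign}(\eta)$. Writing the second equation as $J(V/T)=\rho V$ and substituting $U$, the whole system becomes equivalent to the two scalar relations
\[
J\!\left(\frac{V}{T}\right)=\rho V \qquad\text{and}\qquad V=\overline V\left(1-\frac{1}{T}\right),
\]
the second one read as $T=1$ when $\eta=0$, with $V,T>0$ and $\rho,\overline V$ as in (\ref{HP0}). I would then read these as curves $V=V_1(T)$ and $V=V_2(T)$ and count their intersections, using only the hypotheses of (\ref{assumptions_DLR_1}). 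For the first relation one shows, by implicit differentiation together with the chord inequality $J(x)/x\ge J'(x)$ (valid for concave $J$ with $J(0)=0$), that $V_1$ is strictly decreasing, lives entirely in $T<1/\rho$, and satisfies $V_1\to 1/\rho$ as $T\to0^+$ and $V_1\to0$ as $T\to(1/\rho)^-$. The second relation is an increasing branch on $T\in(1,\infty)$ when $\eta>0$, a decreasing branch on $T\in(0,1)$ running from $+\infty$ down to $0$ when $\eta<0$, and the vertical line $T=1$ when $\eta=0$.

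Comparing the supports and the endpoint values of $V_1-V_2$ then yields the trichotomy. If $\eta>0$ the curves can meet only where $T>1$, which forces $1<1/\rho$, i.e.\ $\rho<1$; there one curve is increasing and the other decreasing and the signs of $V_1-V_2$ at the two ends are opposite, so there is exactly one intersection, and none when $\rho\ge1$. If $\eta=0$ one simply intersects the line $T=1$ with the curve $V_1$. If $\eta<0$ both curves are decreasing on their common interval, and I would inspect the sign of $V_1-V_2$ at both ends: for $\rho<1$ it changes sign once, giving one seropositivity, whereas for $\rho>1$ it has the \emph{same} sign at both ends, so the number of crossings is even, namely zero or two.

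The hard part will be exactly this last case, where endpoint signs do not determine the count. Here I would exploit the concavity of $J$ to control the curvature of $V_1$ relative to the branch $V_2$ and show that the gap $V_1-V_2$ can vanish at most twice, so that the only possibilities are $0$ or $2$ roots. The threshold $L$ would then be characterized by tangency of the two curves, i.e.\ by simultaneous equality of their values \emph{and} their slopes; the existence of a single such critical $\rho$ follows by showing that the minimal gap over $T$ is a monotone function of $\rho$, which produces two seropositivities for $1<\rho<L$, a double one at $\rho=L$, and none for $\rho>L$. Since this tangency condition is a transcendental equation involving $J$ and all the parameters through $\overline V$, the value $L$ is necessarily non-explicit, in agreement with the statement.
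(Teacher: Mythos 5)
Your reduction is essentially the paper's own, recast. Eliminating $U$ and $W$ via the last two stationary equations, deriving $\beta(1-T)+\frac{\eta}{\zeta}U=0$ (hence $\operatorname{sign}(T-1)=\operatorname{sign}(\eta)$) and $J(V/T)=\rho V$, reproduces exactly the paper's (\ref{fix_DLR_3})--(\ref{fix_DLR_4}); where the paper substitutes $T=1/(1-V/\overline{V})$ to get the single scalar equation (\ref{fix_DLR_7}), and for $\eta<0$ inverts $X=V(1-V/\overline{V})$ to compare $J(X)$ with $\rho\,\psi(X)$, you instead intersect two curves in the $(T,V)$-plane. This is a cosmetic difference with one real trade-off: in the paper's $X$-formulation the dependence on $\rho$ is \emph{pointwise} monotone ($\rho\psi$ increases with $\rho$), so the threshold $L$ appears directly as the extremal $\rho$ at which the graphs still touch, whereas your picture needs the extra ``minimal gap is monotone in $\rho$'' step — which does hold, since your curve $V_1$ is the fixed parametric curve $x\mapsto\bigl(J(x)/x,\,J(x)\bigr)$ scaled by $1/\rho$, but you should say so. On the hard counting claims (exactly one crossing when the endpoint signs of $V_1-V_2$ differ; at most two when they agree) you are at the same level of rigor as the paper, which argues from Figures \ref{fig_DLR_1} and \ref{fig_DLR_2} and computes $L\simeq 3.7$ numerically for $J\equiv\tanh$: note that concavity of $J$ alone does not make $V_1(T)$ concave in $T$ (for the min-mod $J$ of (\ref{modele_eq3}) the curve even degenerates to an L-shape and is not a graph), so your ``curvature control'' sketch would need strict concavity hypotheses to be completed — but the paper proves no more.

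The genuine gap is your case $\eta=0$, which you dispatch with ``one simply intersects the line $T=1$ with the curve $V_1$'' and never conclude. Carrying out your own prescription, that intersection is nonempty precisely when $\rho<1$: there is then a unique $V^*>0$ with $J(V^*)=\rho V^*$ (since $J$ is concave, $J'(0)=1>\rho$, $J$ bounded). And this genuinely is an admissible fixed point of (\ref{sys_DLR_1}--\ref{sys_DLR_4}): taking $T=1$, $U=\zeta V^*/(a\theta)$, $W=(1-\theta)V^*/\theta$, the first stationary equation reduces to $\frac{V^*}{a\theta}\,(a\omega-\alpha\zeta)=0$, which holds exactly because $\eta=0$. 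So your method, completed honestly, \emph{contradicts} the clause you were asked to prove; you cannot leave the case hanging, and you should have flagged that the claim ``no fixed point else than health'' can only hold when $\rho\geq 1$. For what it is worth, the paper's own proof of this clause is the one-liner ``$\overline{V}=\infty$. There is no solution.'', which overlooks that (\ref{fix_DLR_7}) degenerates to $J(V)=\rho V$ with $T=1$; your construction exhibits the fixed point it misses. The failure here is thus not that your route is wrong — it is that you did not notice your setup disproves the statement in this (non-generic) case, and a referee reading only your proposal would be left with an unproved, and in fact false-as-stated, sub-claim.
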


\noindent $\bullet$ \quad  Proof of Proposition 14.

\noindent One finds easily that 
\moneq  \label{fix_DLR_2}
\begin{array}{c}
(1-\theta) V= \theta W, \\
U = \Frac{\zeta}{a} \, (V +W) \, T=\Frac{\zeta}{a \theta} \, V \, T .
\end{array} \monend  
Then, the system (\ref{fix_DLR_1}) reduces to
\begin{eqnarray}
\label{fix_DLR_3}
0 & = & \beta (1-T) -\Frac{\alpha \zeta}{a \theta}VT + \Frac{\omega}{\theta} VT,  \\
\label{fix_DLR_4}
\Frac{T}{\tau} J\left( \Frac{V}{T} \right)& = & \Frac{\alpha \zeta}{a \theta} \, V \, T.
\end{eqnarray}
Since $T=0$ is not a solution, one may simplify $T$ in (\ref{fix_DLR_4}). Equation
(\ref{fix_DLR_3}) gives $T$:
\moneq  \label{fix_DLR_5}
T = \Frac{\beta}{\beta -\frac{(\omega a -\alpha \zeta)}{a \theta} V}=\Frac{1}{1-V/\overline{V}}.
 \monend  
Thanks to the value of $T$ given by (\ref{fix_DLR_5}), we are driven
to solve (\ref{fix_DLR_4}) in the form:
\moneq  \label{fix_DLR_7}
J\left(V\Frac{(\overline{V} -V)}{\overline{V}}\right) = 
\Frac{\alpha \zeta \tau }{a \theta} V = \rho V. 
\monend   
Three cases appear to solve this equation.

\monitem If $\eta >0$, we must find the solution $V$ of (\ref{fix_DLR_7}) where $
\overline{V}>0$. To guarantee $T\geq 0$, we must have $V <\overline{V}$.
Since the function $V\mapsto
J(V(\overline{V}-V)/\overline{V})$ is symmetric with respect to
$\overline{V} /2$ and the right hand side $\rho V$ is a linear function of $V$,
only two subcases must be distinguished according to the Figure~6. 

\bigskip  \centerline {
{\includegraphics[width=.49 \textwidth]{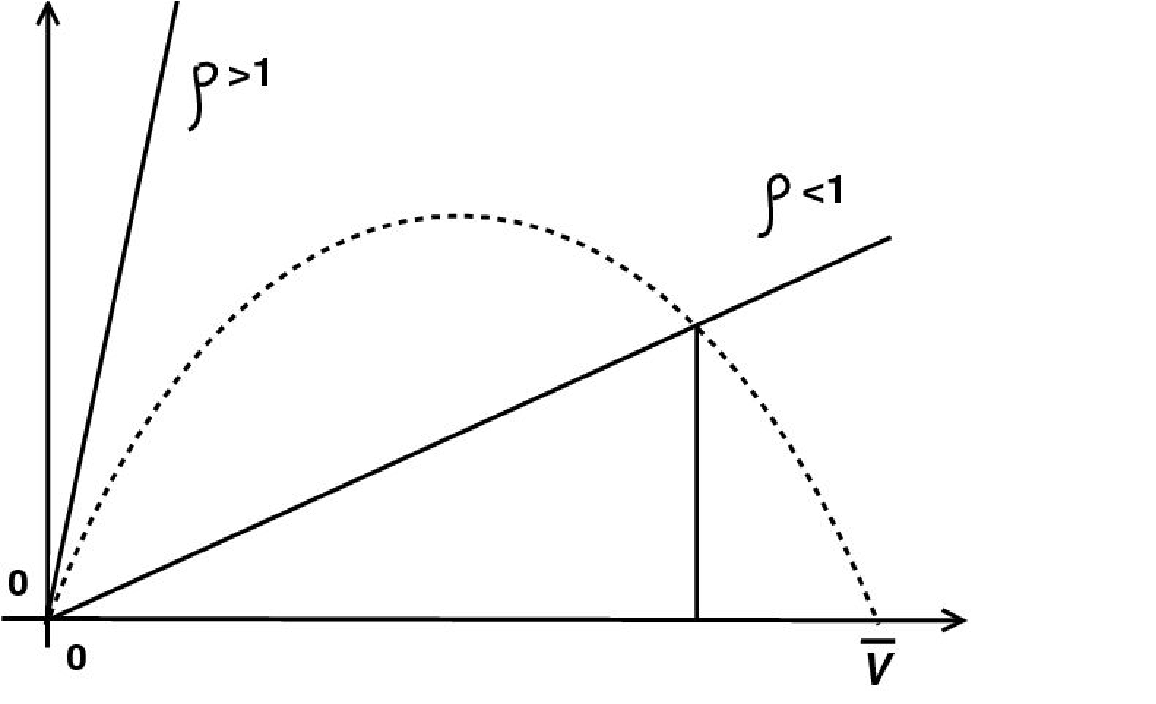}}  }  
\hangindent=7mm \hangafter=1 \noindent {\bf Figure 6}. $\,$   
{\it   Two characteristic shapes of the curves $V\mapsto \rho V$ and $V\mapsto
J(V(\overline{V}-V)/\overline{V})$ (dotted) for $\eta >0$. }
\bigskip  

\noindent This discussion is summarized in the following:

\noindent 1. \quad If $\rho <1$ there are two solutions: 

\quad {\em (i)} \quad $V_1=0 \Rightarrow T_1=1, U_1=0= W_1$ ; 

\quad {\em (ii)} \quad $V_2 >0 \Rightarrow T_2>1$. 

\noindent The first one will be denoted health and the second one
seropositivity. We notice that $V_2 < \overline{V}$ and in the end,
the condition on $V$ to ensure $T>0$ is satisfied.
 
\noindent 2. \quad If $\rho \geq 1$, the only solution is health. \\ 

\monitem If $\eta =0$, then $\overline{V}=\infty$. There is no solution.

\monitem If $\eta <0$, then $\overline{V} <0$ and there is no constraint
  on $V$ to ensure that $T \geq 0$. Unlike the case $\eta >0$, the
  parabola inside the function $J$ is of the type $y=+x^2$ instead of
  $y=-x^2$. So as to circumvent this, we will invert the function $V
  \mapsto V(1-V/ \overline{V})$ on $\mathbb{R}^+$ where it is
  one-to-one. For any $V\in \mathbb{R}^+$ (we look for non-negative $V$),
  there is a unique $X \in
  \mathbb{R}^+$ such that
\moneq  \label{HP1}
X=V(1-V/\overline{V}) \Leftrightarrow V=\psi(X) = \Frac{\overline{V} +
  \sqrt{\overline{V}^2-4 \overline{V} X}}{2}.
\monend  
%

\bigskip  \centerline {
{\includegraphics[width=.49 \textwidth]{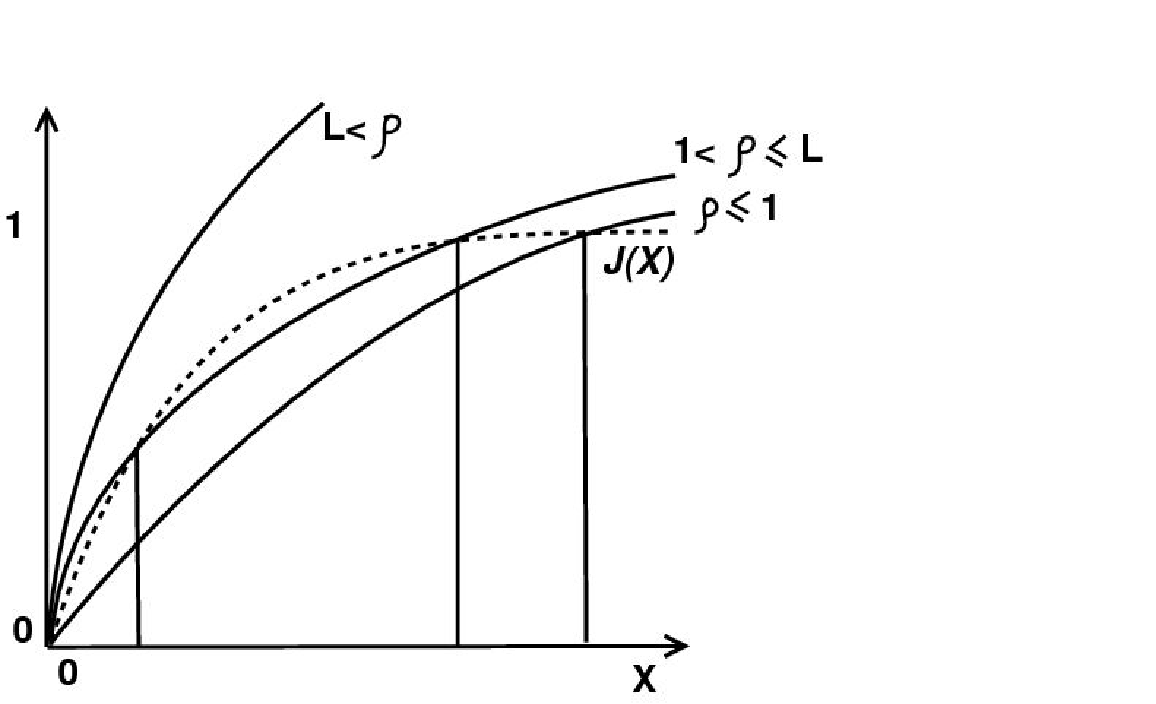}}  }  
\hangindent=7mm \hangafter=1 \noindent {\bf Figure 7}. $\,$   
{\it  Three characteristic shapes of the curves $X \mapsto \rho \psi(X)$ and $J$ (dotted) with
$J \equiv $ tanh  and $\eta <0$. }
\bigskip   \bigskip              

\noindent 
Then equation (\ref{fix_DLR_7}) amounts to
\moneq  \label{HP2}
J(X)=\rho \psi(X).
\monend   
\noindent 1. \quad If $\rho \leq 1$, there are two solutions denoted $X_i^1$: 

\quad ($i$) \quad $X_1^1=0$ corresponding to health ; 

\quad ($ii$) \quad $X_2^1>0$ corresponding to seropositivity. 

\noindent 2. \quad If $1 < \rho \leq L$ there are three solutions denoted by
$X_i^2$: 
 
\quad ($i$) \quad $X_1^2=0$ corresponding to health ;
 
\quad ($ii$) \quad $X_2^2 > 0$ corresponding to seropositivity ;
 
\quad ($iii$) \quad $0 < X_3^2 < X_2^2$ corresponding to seropositivity. 

\noindent Notice that there is no explicit value of $L$. Yet for $J \equiv $ tanh and $\overline{V}
  \simeq -0.054$, we could numerically find $L\simeq 3.7$.
 
\noindent 3. \quad If $\rho > L$, there is only one solution $X_1^3 =0$ (health). 
\hfill $ \square $ 

\bigskip     \noindent {\bf \large 4-4    \quad  Stability} 

\smallskip   
The Jacobian matrix is, at every fixed point:
\moneqstar 
\left(\begin{array}{cccc}
\Frac{V}{T \tau} J'\left(\Frac{V}{T}\right) -\Frac{\beta}{T} & 0 & \omega T
-J'\left(\Frac{V}{T}\right)/\tau & \omega T \\
J\left(\Frac{V}{T}\right)/\tau -\Frac{V}{T \tau} J'\left(\Frac{V}{T}\right) &
-\alpha & J'\left(\Frac{V}{T}\right)/\tau & 0 \\
-\zeta V & a \theta & -\zeta T & 0 \\
-\zeta W & a (1-\theta) & 0 & -\zeta T
\end{array} \right). \monendstar 
If we compute the characteristic polynomial through the last right
column, we find:
\begin{equation}  \label{HP3}  \left\{
\begin{array}{c}
P(\lambda)= -(\zeta T + \lambda)\left| \begin{array}{ccc}
\Frac{V}{T \tau} J'\left(\Frac{V}{T}\right) -\Frac{\beta}{T} -\lambda & 0 & \omega T
-J'\left(\Frac{V}{T}\right) / \tau \\
J\left(\Frac{V}{T}\right)/\tau -\Frac{V}{T \tau} J'\left(\Frac{V}{T}\right) &
-\alpha -\lambda & J'\left(\Frac{V}{T}\right)/\tau \\
-\zeta V & a \theta & -\zeta T -\lambda
\end{array} \right| \\  ~ \\
-\omega T
\left| \begin{array}{ccc}
J\left(\Frac{V}{T}\right)/\tau -\Frac{V}{T \tau} J'\left(\Frac{V}{T}\right) &
-\alpha -\lambda & J'\left(\Frac{V}{T}\right)/\tau \\
-\zeta V & a \theta & -\zeta T - \lambda\\
-\zeta W & a (1-\theta) & 0 
\end{array} \right|.
\end{array}  \right.    \end{equation}  
Thanks to (\ref{fix_DLR_2}), we can easily simplify the second line of the
second determinant of (\ref{HP3}). As a consequence one may write:
\moneqstar  
P(\lambda)= -(\zeta T + \lambda)\left| \begin{array}{ccc}
\Frac{V}{T \tau} J'\left(\Frac{V}{T}\right) -\Frac{\beta}{T} -\lambda & 0 & \omega T
-J'\left(\Frac{V}{T}\right) / \tau \\
J\left(\Frac{V}{T}\right)/\tau -\Frac{V}{T \tau} J'\left(\Frac{V}{T}\right) &
-\alpha -\lambda & J'\left(\Frac{V}{T}\right)/\tau \\
-\zeta V & a \theta & -\zeta T -\lambda
\end{array} \right|.  \monendstar  
\moneqstar  -\omega T (\zeta T +\lambda)
\left| \begin{array}{cc}
J\left(\Frac{V}{T}\right)/\tau -\Frac{V}{T \tau} J'\left(\Frac{V}{T}\right) &
-\alpha -\lambda \\
-\zeta W & a (1-\theta)
\end{array} \right|.  \monendstar  
In the general case, no more factorization could be found and the
polynomial is
\moneq \label{HP4}  \!\! \!\! \!\! \!\! \left\{  \begin{array}{c}
P(\lambda ) =  -(\zeta T + \lambda ) \left[ \left( \Frac{V}{T \tau}
J'\left(\Frac{V}{T}\right) -\Frac{\beta}{T} -\lambda \right)\left( 
-{\displaystyle {{a \theta}\over{\tau}}}
J'\left(\Frac{V}{T}\right)  + (\alpha +\lambda ) (\zeta T
+\lambda)\right) \right. \\
+\left. \left(\Frac{\omega T}{\theta}
-  {\displaystyle {{1}\over{\tau}}}  J'\left(\Frac{V}{T}\right)  \right)\left( 
{\displaystyle {{a \theta}\over{\tau}}}
\left(J\left(\Frac{V}{T}\right)  -\Frac{V}{T \tau}
J'\left(\Frac{V}{T}\right)\right)-(\alpha +\lambda )\zeta V\right)\right].
\end{array}  \right.    \monend

   \bigskip  \noindent {\bf   4-4-1  \quad    Stability of the health state}

In the case of health ($T=1,U=0=V=W$), the eigenvalues are zeros of
(\ref{HP4}) which can be rewritten thanks to (\ref{HP0}):
\moneq  \label{HP5}
P(\lambda)=(\zeta T +\lambda)(\beta +\lambda)\left( \Frac{a
\theta}{\tau }(-1 +\rho)+\lambda (\alpha + \zeta) 
+\lambda^2 \right) = 0. 
\monend   
We will prove the following theorem.
\begin{theorem}
\label{HP6}
If $\rho > 1$, health is (locally) stable.\\
If $\rho < 1$, there exists one positive eigenvalue associated to an
admissible eigenvector in the sense of Definition \ref{admiss}.
\end{theorem}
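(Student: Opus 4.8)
The plan is to read off the stability directly from the factored characteristic polynomial (\ref{HP5}), which has already been computed at health. Evaluating (\ref{HP4}) at the health fixed point $(T,U,V,W)=(1,0,0,0)$ and using $J(0)=0$, $J'(0)=1$ together with the definition of $\rho$ in (\ref{HP0}), the polynomial splits as
\[
P(\lambda)=(\zeta+\lambda)(\beta+\lambda)\,Q(\lambda), \qquad
Q(\lambda)=\lambda^2+(\alpha+\zeta)\lambda+\Frac{a\theta}{\tau}(\rho-1).
\]
Since $\beta,\zeta>0$, the first two roots are $\lambda=-\zeta<0$ and $\lambda=-\beta<0$, so the entire question of stability reduces to the location of the roots of the quadratic $Q(\lambda)$.

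\textbf{The case $\rho>1$.} Here I would simply invoke the sign of the coefficients of $Q$. The linear coefficient $\alpha+\zeta$ is positive, and the constant term $\frac{a\theta}{\tau}(\rho-1)$ is strictly positive when $\rho>1$. By Vieta's formulas the two roots have negative sum and positive product, hence both have strictly negative real part (whether the discriminant is positive or negative). Combined with the two negative real roots $-\zeta,-\beta$, all four eigenvalues have negative real part, so health is locally asymptotically stable. This gives the first assertion.

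\textbf{The case $\rho<1$.} Now the constant term of $Q$ is strictly negative, so the product of its two roots is negative; since the coefficients are real, the two roots are real and of opposite sign, giving exactly one root $\lambda_+>0$. This produces the claimed positive eigenvalue. It remains to verify admissibility in the sense of Definition \ref{admiss}: I must exhibit the eigenvector $u$ associated to $\lambda_+$ and check that $X^*+\varepsilon u$ or $X^*-\varepsilon u$ re-enters $\mathcal B$ for small $\varepsilon>0$. Solving $(J-\lambda_+ I)u=0$ at health with $J(0)=0,J'(0)=1$, the first row decouples ($u_T$ is free up to the $V$-coupling) while the remaining components $u_U,u_V,u_W$ solve a linear system; because $\lambda_+>0$ and the off-diagonal couplings $a\theta,a(1-\theta),1/\tau$ are positive, one can choose the sign of $u$ so that $u_U,u_V,u_W$ are all nonnegative (and not all zero). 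Then $X^*+\varepsilon u$ has $U,V,W\ge 0$ and $T$ still positive for small $\varepsilon$, so it lies in $\mathcal B$, establishing admissibility.

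\textbf{Main obstacle.} The eigenvalue count in both regimes is routine once the factorization (\ref{HP5}) is in hand. The only genuinely delicate point is the admissibility claim in the case $\rho<1$: one must actually compute the eigenvector for $\lambda_+$ and check a consistent sign choice makes all three non-$T$ components nonnegative so that the perturbation enters the biologically admissible domain rather than leaving it. I expect this sign bookkeeping, rather than any analytic difficulty, to be the crux of the argument.
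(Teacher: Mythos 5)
Your proposal is correct and follows essentially the same route as the paper: factor the characteristic polynomial at health as in (\ref{HP5}), reduce stability to the quadratic $\lambda^2+(\alpha+\zeta)\lambda+\frac{a\theta}{\tau}(\rho-1)$, and for $\rho<1$ establish admissibility by solving the eigenvector system (the paper's (\ref{HP7})), where rows $2$ and $3$ are dependent precisely because $\lambda_+$ annihilates the central $2\times 2$ determinant, so that $x_3>0$ forces $x_2=x_3/(\tau(\alpha+\lambda_+))>0$ and $x_4=a(1-\theta)x_2/(\zeta+\lambda_+)>0$ --- exactly the sign bookkeeping you identify as the crux. The only cosmetic difference is that you sign the quadratic's roots via Vieta's formulas, whereas the paper first notes the discriminant $(\alpha-\zeta)^2+4a\theta/\tau>0$ to get real roots; both arguments are valid.
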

%

\noindent $\bullet$ \quad  Proof of Theorem 15.

\noindent The discriminant of the non-reduced second order
polynomial in (\ref{HP5}) 
is $(\alpha-\zeta)^2+4a\theta/\tau >0$. This enables us to claim that:
%

\monitem if $\rho > 1$, the four roots are negative and so the evolution is (locally) stable;

\monitem if $\rho < 1$, there exists one (and only one) positive
root and so the evolution is locally unstable in one direction. 
We still have to prove that the
eigenvector $\vec{v}$ associated to a positive eigenvalue of
(\ref{HP5}) is admissible. In other words, we need to prove that
$\vec{v}$ is such that for $\varepsilon >0$ or $\varepsilon < 0$ small
enough, $(1,0,0,0) + \varepsilon \vec{v}$ has its four components
non-negative. To prove this, we write the system satisfied by the
eigenvector:
\moneq \label{HP7}  \left(
\begin{array}{cccc}
-(\beta +\lambda) & 0 & \omega -1/\tau & \omega \\
0 & -(\alpha  +\lambda) & 1/\tau & 0 \\
0 & a \theta & -(\zeta  +\lambda) & 0 \\
0 & a(1-\theta) & 0 & -(\zeta  +\lambda)
\end{array}
\right)
\left(\begin{array}{c}
x_1 \\ x_2 \\  x_3 \\ x_4
\end{array} \right)
 =
\left(\begin{array}{c}
0 \\ 0 \\  0 \\ 0
\end{array} \right),  \monend 
where $\rho <1$ and $\lambda$ is the (unique) positive root of
(\ref{HP5}). More precisely, $\lambda$ is the unique root of the third
term: $a \theta (-1+ \rho) /\tau +\lambda (\alpha +\zeta )
+\lambda^2$. As this second order polynomial is the determinant of the
$2 \times 2 $ submatrix in the center of the matrix in (\ref{HP7}),
and because of the very particular shape of the lines $2$ and $3$, we
can claim these lines are bound. It suffices then to take out the third line
to be driven to the system equivalent to (\ref{HP7}):
\moneqstar  
\left\{
\begin{array}{rcl}
(\beta +\lambda) x_1 & = & (\omega -1/\tau) x_3 + \omega x_4 \\
(\alpha +\lambda) x_2 & = & x_3 / \tau \\
a(1-\theta) x_2 & = & (\zeta +\lambda) x_4.
\end{array} \right. \monendstar 
As $\lambda >0$, we can see that there exists solutions such that
$x_2, x_3, x_4$ be non-negative. So, at least locally the solution in the
direction of this eigenvector is admissible and the proof is complete.
\hfill $ \square $ 

   \bigskip   \bigskip   \noindent {\bf   4-4-2  \quad   Stability of seropositivity}

  \smallskip   
We have no rigorous study of the stability/unstability of
seropositivity. So we use numerical simulations. In this
subsection, we take 
\moneqstar   
\beta = 0.01\mbox{ day}^{-1}, 
\alpha = 0.7\mbox{ day}^{-1}, \omega = 0.01\mbox{ day}^{-1}, a= 250\mbox{ day}^{-1},
\theta = 0.1, 
\monendstar 
and initial values 
\moneqstar    
T(0)=1, \quad U(0)=0, \quad V(0)=W(0)=0.05.
\monendstar 
The other parameters ($\tau, \zeta$) are
taken so as to illustrate the fixed points depicted in
Proposition \ref{prop14}. 
We provide the evolution on a short time and a phase portrait for each
case. Let us remind the fixed points:

 \smallskip   \noindent 1. \quad  $\eta >0$. 

\quad {\em (i)} \quad If $\rho <1$: health (partially unstable) and one
  seropositivity numerically stable as can be seen on Figure~8 
In this simulation, $\eta=1.8$, $\rho=0.28$.
The effect of the infection is to {\em emphasize} the activity of the 
immune system ;
 
\quad {\em (ii)} \quad If $\rho \geq 1$: only health (stable), as in 
 Figure~9 
 ($\eta=0.4, \rho=1.68$). Minimum
value for lymphocytes is obtained for 
$t \simeq 0.8$ day and maximal one for $t \simeq 16.4$ days.  

\bigskip  \newpage  \centerline {
{\includegraphics[width=.49 \textwidth]{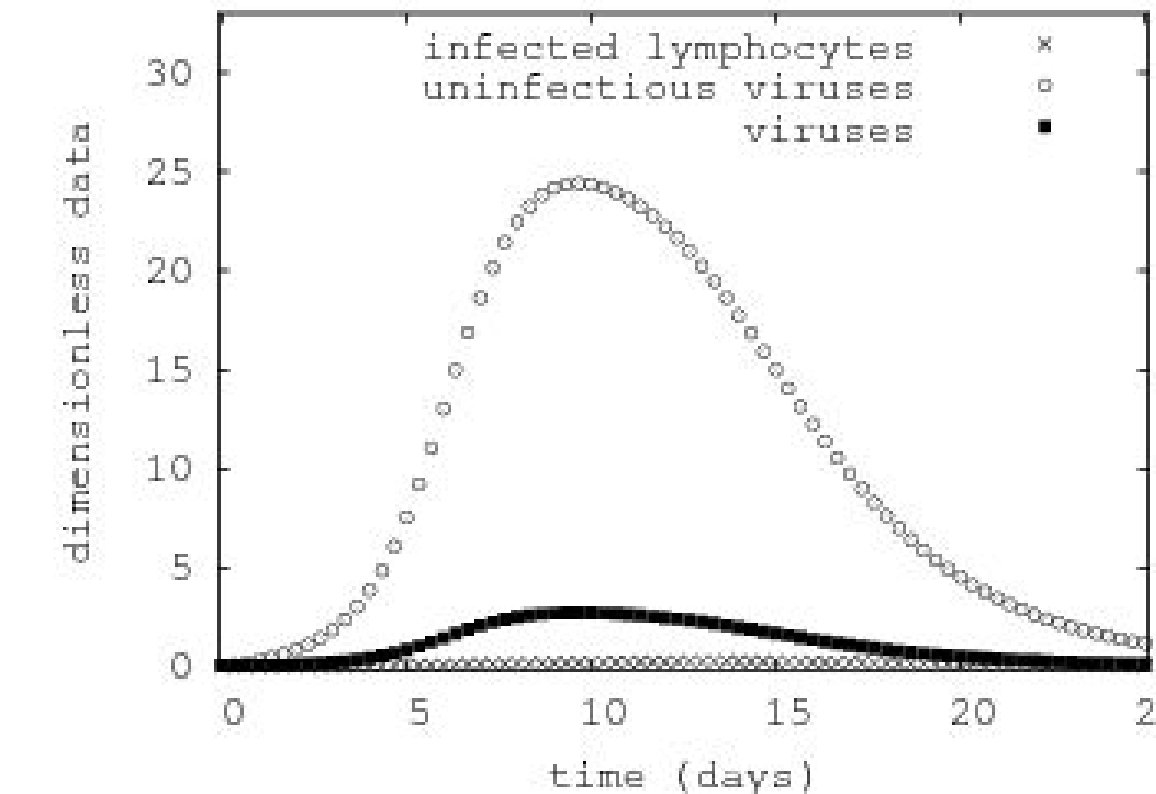}}  
{\includegraphics[width=.49 \textwidth]{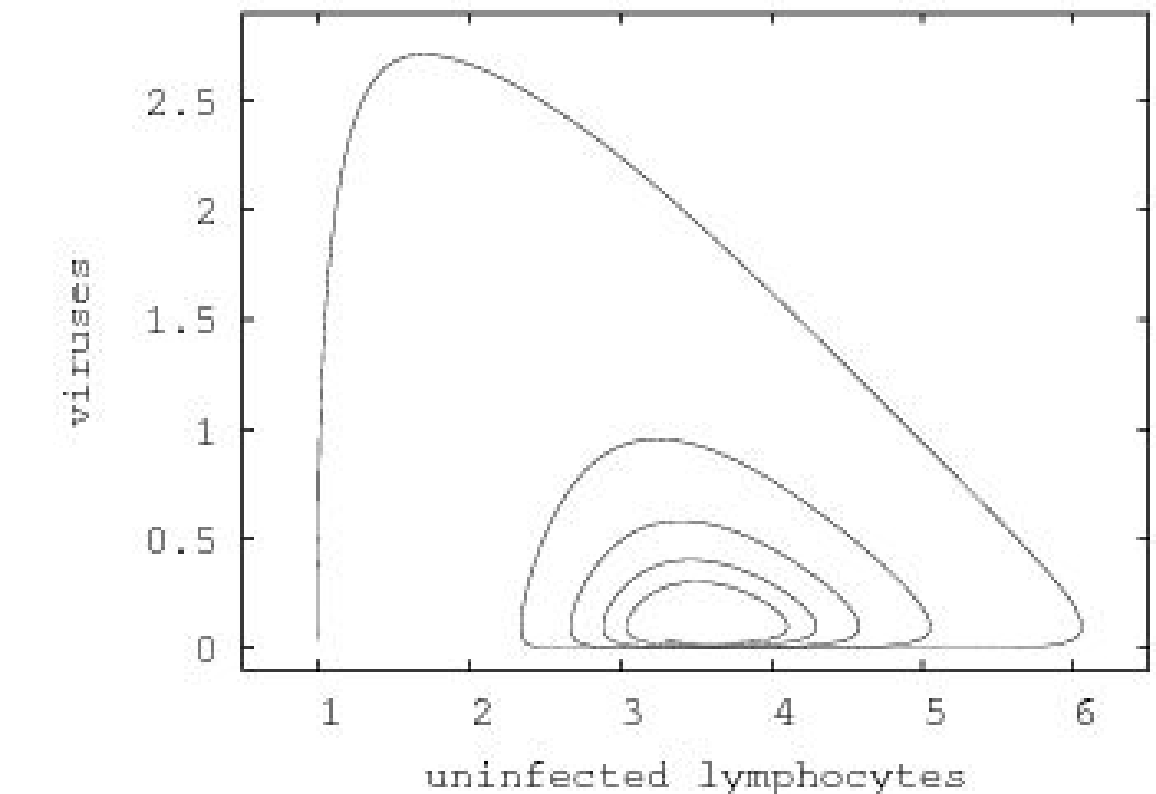}}} 
\hangindent=7mm \hangafter=1 \noindent {\bf Figure 8}. $\,$   
{\it  Numerical results for our model.
Case where a seropositive state is stable ($ \tau = 10 ,\, \zeta=1  $). 
Relatively short time evolution on the left 
(25 days) and phase plane for
lymphocytes and viruses on the right for a 600 days evolution.   }
\bigskip            

 \centerline {
{\includegraphics[width=.49 \textwidth]{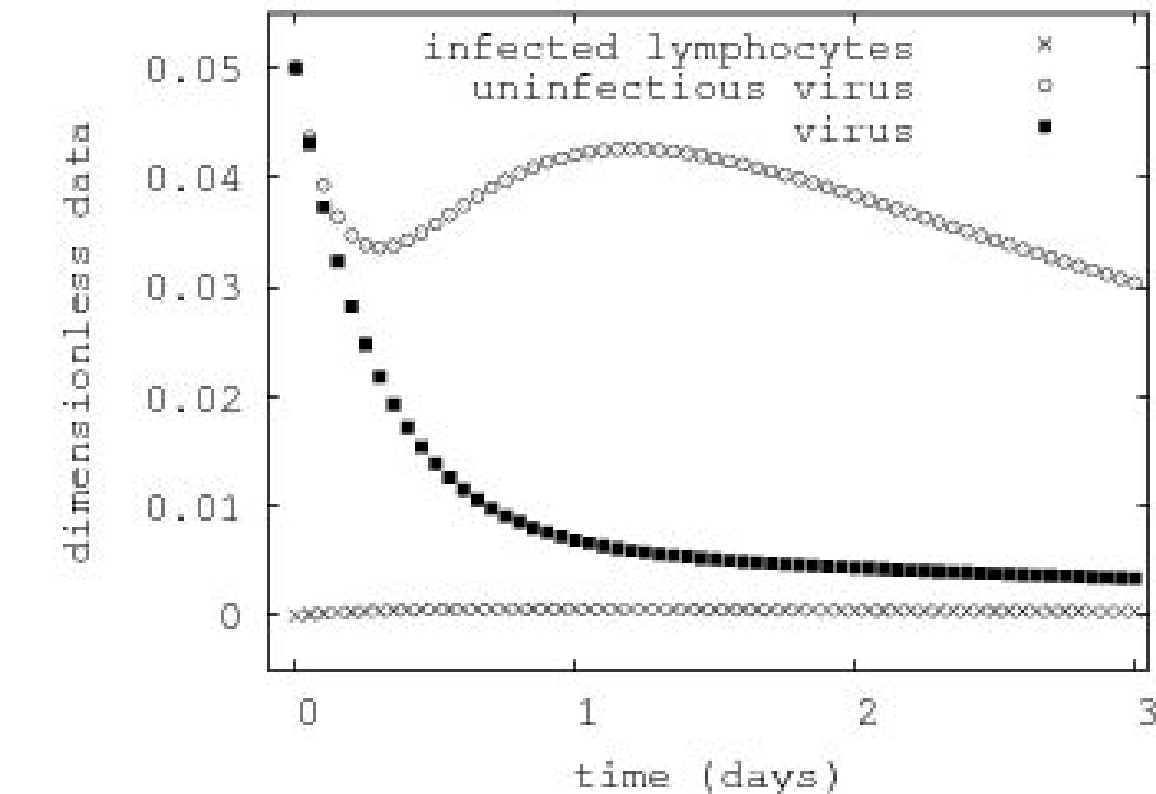}}  
{\includegraphics[width=.49 \textwidth]{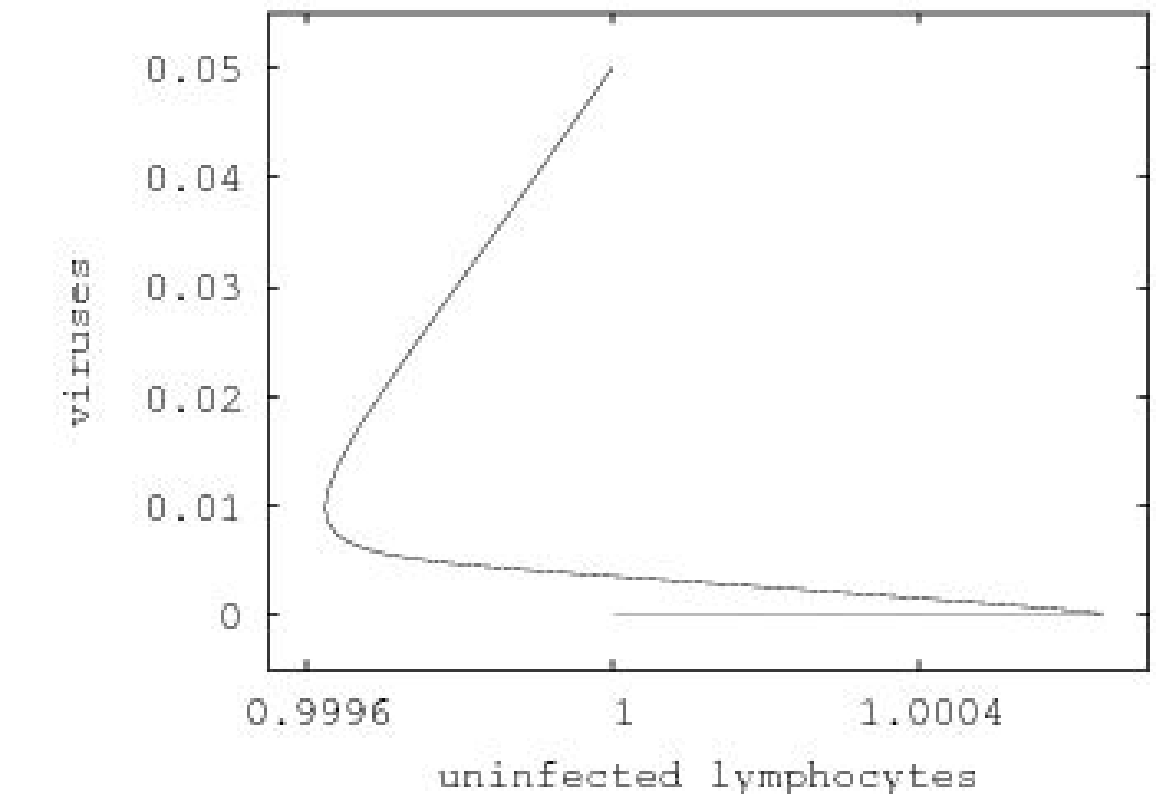}}}
\hangindent=7mm \hangafter=1 \noindent {\bf Figure 9}. $\,$   
{\it Numerical results for our model.
Case where the health is stable  ($ \tau = 20 ,\br   \zeta=3  $).  
Short time evolution on the left (3 days) and phase plane for
lymphocytes and viruses on the right for a 600 days evolution.  }
\bigskip             

  \centerline {
{\includegraphics[width=.49 \textwidth]{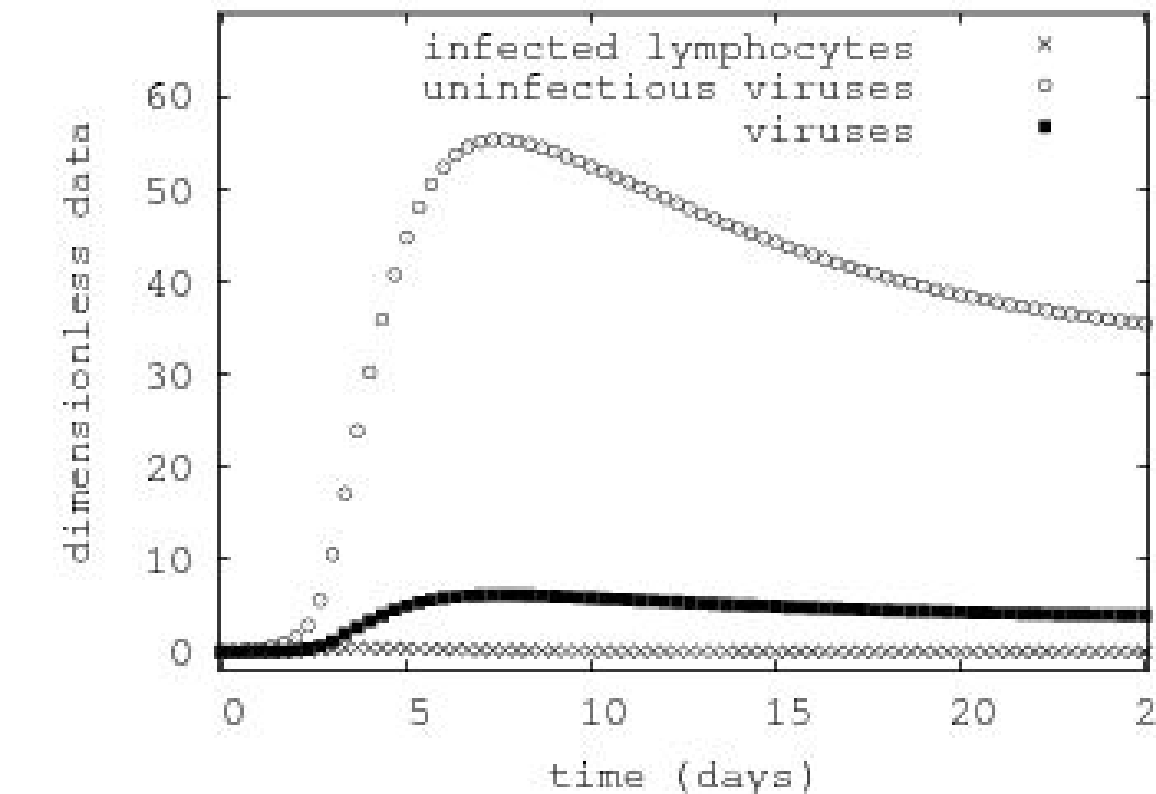}} 
{\includegraphics[width=.49 \textwidth]{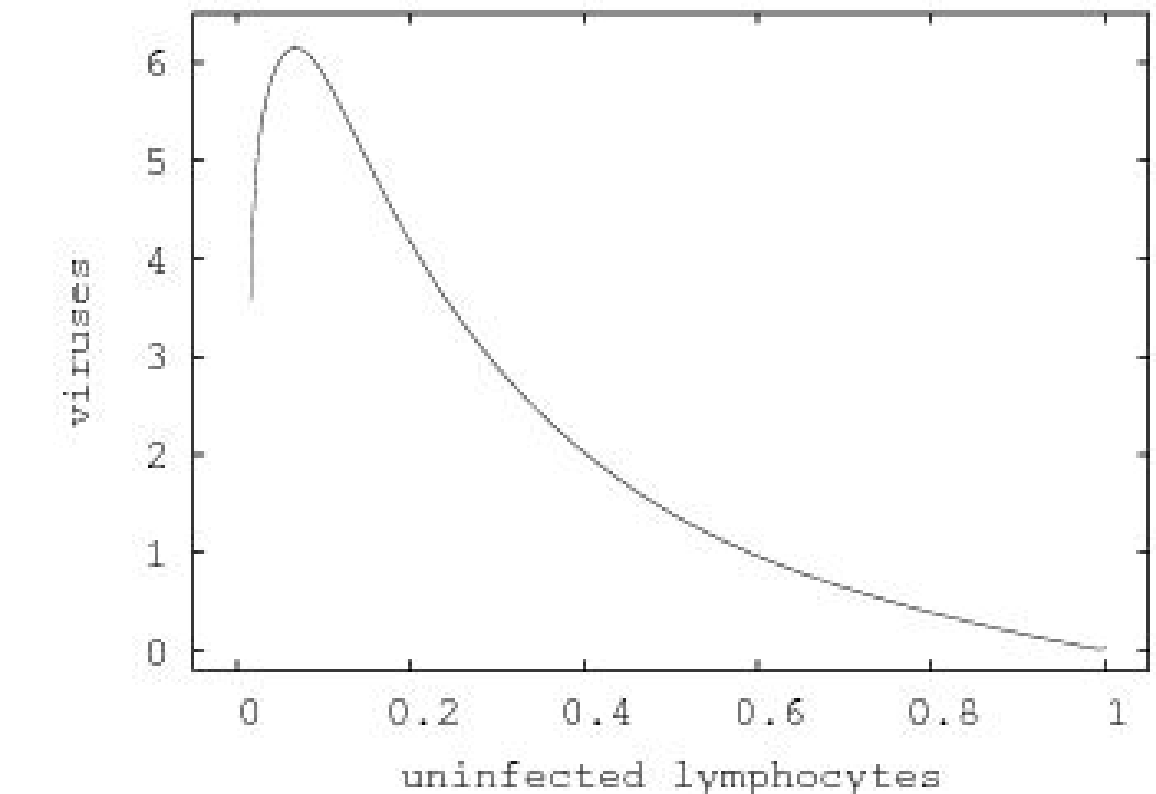}}}
\hangindent=7mm \hangafter=1 \noindent {\bf Figure 10}. $\,$   
{\it Numerical results for our model.
Case where a seropositive state is stable ($ \tau = 1 ,\, \zeta=10  $).  
Relatively short time evolution on the left 
(25 days) and phase plane for
lymphocytes and viruses on the right for a 600 days evolution. 
The effect of the infection is to reduce drastically the efficacy of the 
immune system. }
\bigskip                 
 
\newpage 
\centerline { 
{\includegraphics[width=.49 \textwidth]{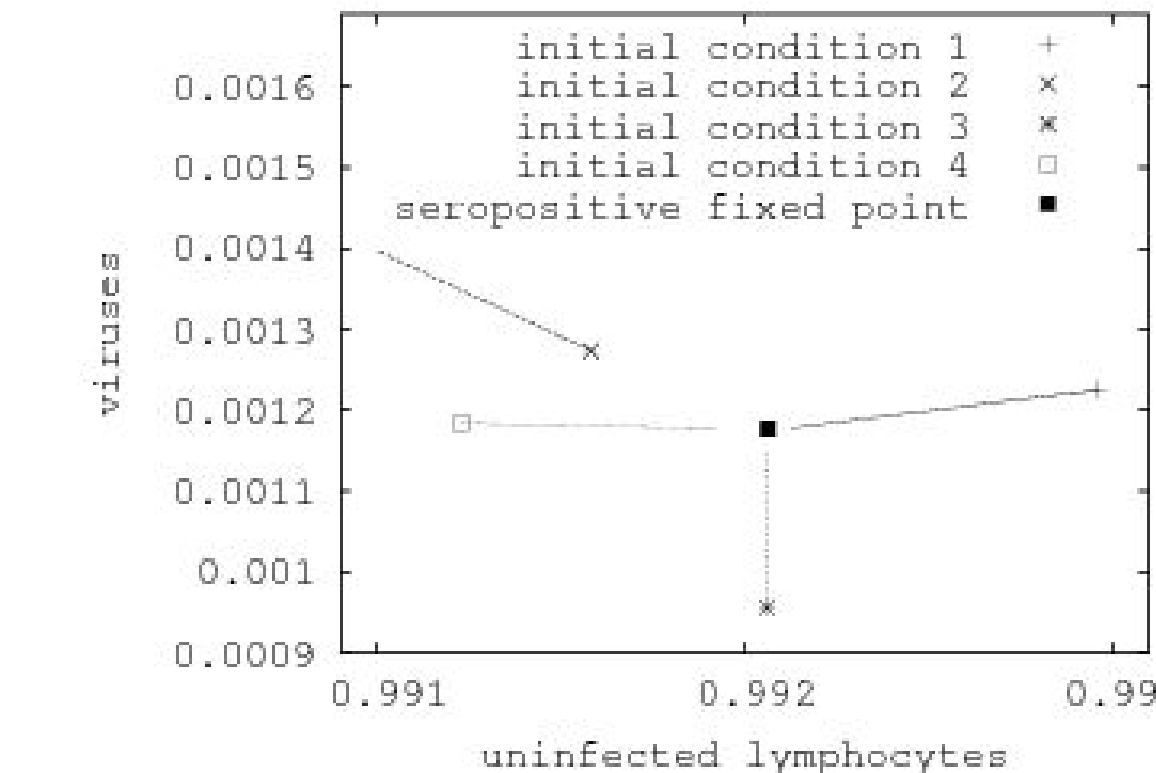}}} 
\hangindent=7mm \hangafter=1 \noindent {\bf Figure 11}. $\,$   
{\it Numerical results for our model with 4 equations ($ \tau = 6,\, \zeta=6 $). 
Simulations are initialized with states very close to the fixed point in directions that
correspond to eigenvectors.
The seropositive state is unstable in one direction among four. }
\bigskip  \bigskip              

 \centerline {
{\includegraphics[width=.49 \textwidth]{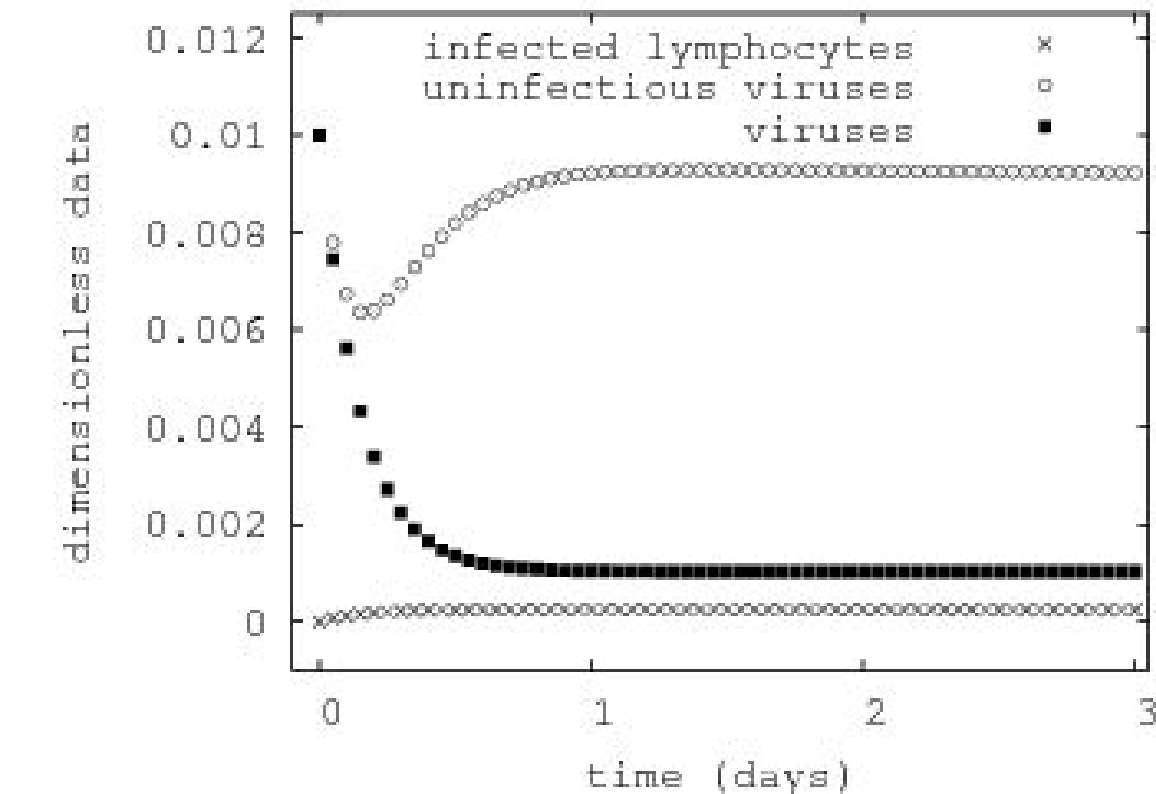}} 
{\includegraphics[width=.49 \textwidth]{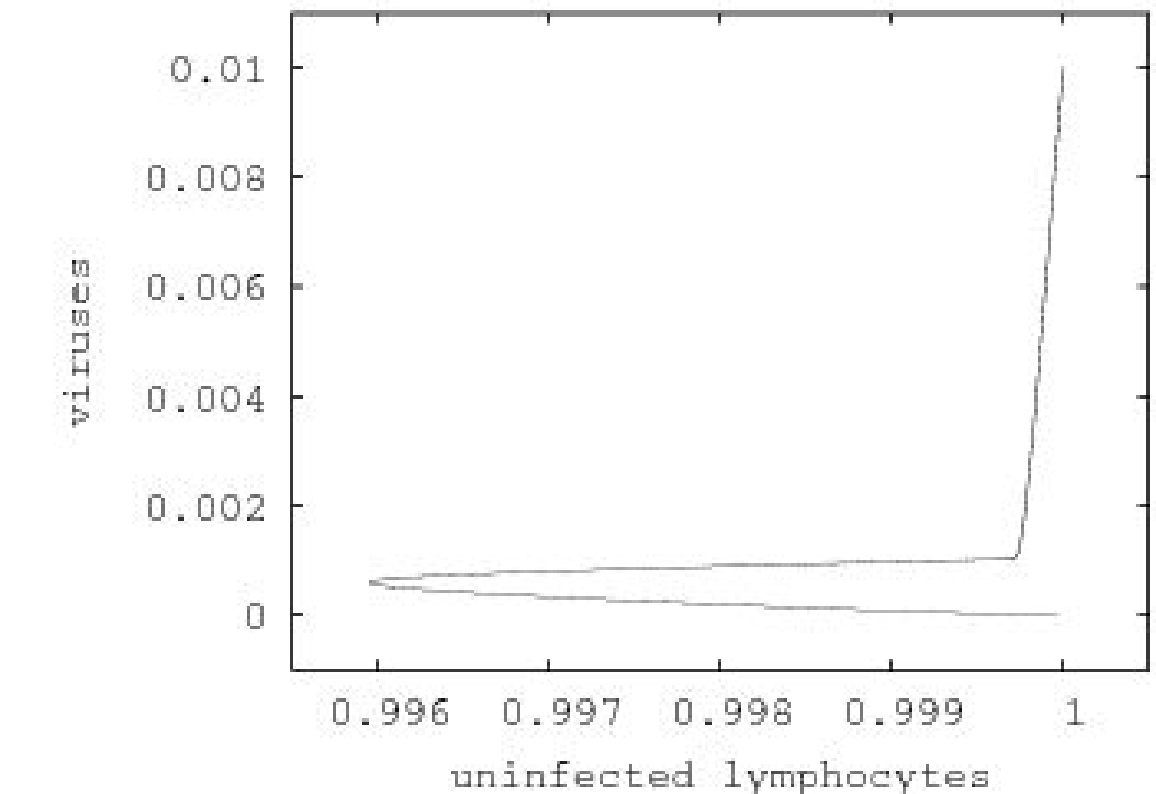}}}
\hangindent=7mm \hangafter=1 \noindent {\bf Figure 12}. $\,$   
{\it  Numerical results for our model with 4 equations.
Case where health is stable ($ \tau = 6,\, \zeta=6  $)
only for small perturbations.  Relatively short time evolution on the left 
(3 days) and phase plane for
lymphocytes and viruses on the right for a 1500 days evolution. }
\bigskip   \bigskip              

 \centerline {
{\includegraphics[width=.49 \textwidth]{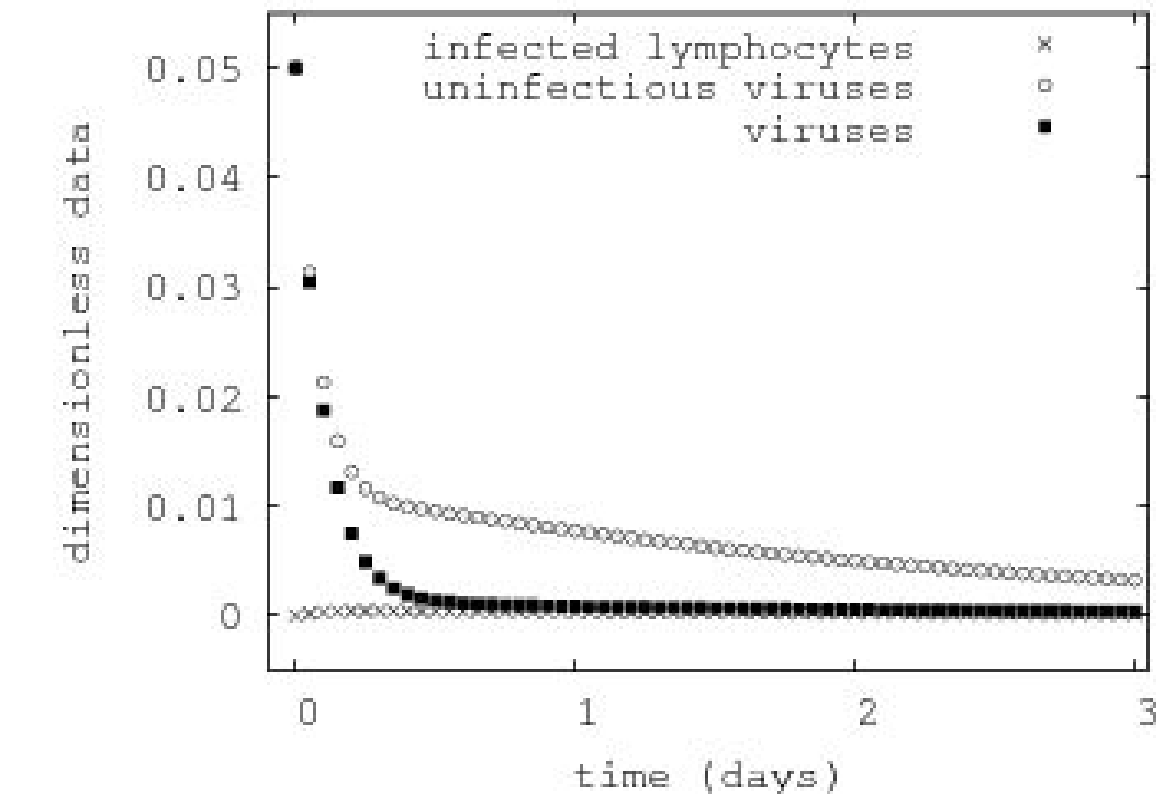}}   
{\includegraphics[width=.49 \textwidth]{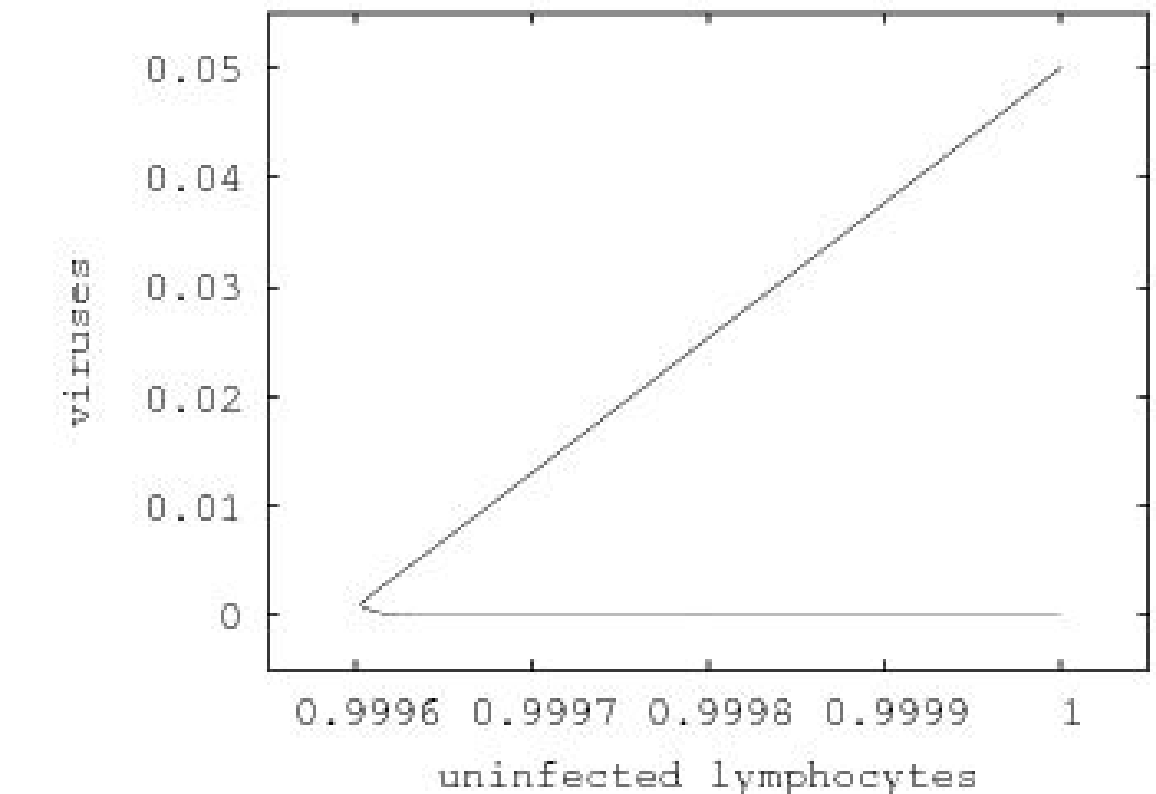}}}
\hangindent=7mm \hangafter=1 \noindent {\bf Figure 13}. $\,$   
{\it  Numerical results for our model.
Case where the health is stable  ($ \tau = 10 , \br \, \zeta=10  $).  
Short time evolution on the left (3 days) and phase plane for
lymphocytes and viruses on the right for a 600 days evolution. }
\bigskip   \bigskip              

\newpage 
\noindent 2. \quad   $\eta <0$.  

\quad {\em (i)} \quad If $\rho <1$: health (partially unstable) and one
  seropositivity numerically stable as can be seen on Figure~10  
($\eta=-4.5, \rho=0.28$). The effect of the
infection is to {\em reduce} the activity of the 
immune system at a very low level ($\simeq 0.015 $ times the 
level of  health);

\quad {\em (ii)} \quad If $1 < \rho < L$: health (stable) and two
  seropositivities. We choosed the parameters $\zeta = 6, \tau=6$ 
($\eta=-1.7,$    
  $\rho=1.008$) for simulation. A first seropositive state ($T^* \simeq 0.129, \; U^*\simeq
  0.03073, \; V^*\simeq 0.992, \; W^*\simeq  8.9$) was numerically found to be locally
  stable (not shown). A second seropositive state ($T^* \simeq 0.992,\; U^*\simeq 0.00028,
  \; V^*\simeq 0.00117, \; W^*\simeq  0.01058$) is very close to health and is locally
  unstable since it has three negative and one positive eigenvalues. So as to illustrate
  this, we have taken initial data
\moneqstar   
X_j=X^* +\varepsilon V_j, \; \; 1 \leq j \leq 4,
\monendstar 
where $X^*$ is the fixed point, $V_j$ is one of its eigenvector and
$\varepsilon$ is sufficiently small. This experiment is depicted in Figure~11.
Note that although health is locally stable, initial conditions
with a viral load of 5\% drove the state to a seropositivity state. This proves that the
basin of attraction is small. So we provide a simulation with an initial viral load of
only 1\% in Figure~12~; 

\quad {\em (iii)} \quad If $L < \rho $: health is stable as one may see on 
Figure~13 
($\eta=-4.5, \rho=2.8$). Minimum value for
lymphocytes is obtained for $t \simeq 2.6 $ days.

\bigskip \bigskip   \noindent {\bf \Large 5) \quad  Discussion of the present model}
  
\smallskip 
Various effects are supposed to be more or less incorporated in any model and specifically
ours. They are discussed hereafter.

\monitem If a model considers the field of {\em free} viruses, then
infection makes an uninfected lymphocyte and a {\em free} virus
disappear and an infected lymphocyte appear at the same time. Three
identical terms (up to a $\pm$ sign) should be present in such a
model. It is not often the case. Since $V_j$ includes not only
free viruses but also {\em infecting} viruses, we are not forced to
have the same three terms.

\monitem We characterize each virus by its antigenicity and by the
information that it is infectious or not. We characterize the
lymphocytes by the virus antigenicity against which they have been
designed. Mutation is then only a probabilistic phenomenon and the
main modeling question is the space in which it takes place and its
probabilistic law. Such a study is postponed to a forthcoming article.

\monitem By explicitly deriving our model, we justify our ``piecewise
linear'' term to model infection, although most authors use
a mass-action quadratic term (for a discussion, see \cite{Callaway_Perelson,Boer}).

\noindent 
Very likely, the reason why the modeling of infection has not
been much studied is that the only biologically measured field is
$T+U$. So the term modeling this phenomenon disappears in any
evolution equation on $T+U$. Yet it models a crucial
reality and deserves more attention.

\monitem We model the production of $T_j$ by the immune system once it
has detected the $V_j$. The production depends on the $V_j$
population, and so our term is quadratic. It has no counterpart in any
model we read. This term could be balanced by the quadratic term
modeling the immune system effect against each strain of virus which
is present in some models (\cite{Frid_Jabin_Perthame,Pastore_Zubelli})
including ours.

\monitem The multiplication/mutation of the virus is the most
challenging phenomenon. Since only one virus may infect a lymphocyte
\cite{Dern}, we may assume $U$ is a good measure of the total number
of {\em infecting} viruses. This assumption is very likely but would
deserve to be further tested. Then, since the antigenicity of a $U_j$
has no link with the antigenicity of the virus infecting it, we need
to take into account the pure multiplication of $V_j$ phenomenon. A
simple modeling of mutation generates our term.

\monitem We model the effect of the immune system against the viruses by a
term depending quadratically on $V_j$ and $T_j$ since these quantities are
effective in the same regime. Such a term can be found in the models
depending on antigenic variation in \cite{Nowak-May} (chapter 12 and
13) and \cite{Pastore_Zubelli} but in these models the lymphocyte's
generation is modeled only through a linear term in $V_j$.

\monitem With further assumptions, one may find a linear combination of the
$T+U$ and $V+W$ evolution such that this new combination is simply
linear in $T,U,V,W$ because the non-linearities may simplify. This
could be experimentally tested.

\monitem The overall behavior of all the systems studied above (including ours if $N=1$)
allows the fields to remain non-negative and be attracted by some fixed points.
So all these models predict convergence to some fixed point which is
never immuno-suppresed ($T=0$). We consider this to be a major drawback for
the long term modeling of HIV infection. This opinion is shared by the
authors of \cite{Pastore_Zubelli} and they propose modifications to
the Nowak-Bangham models enabling the longer term evolution
modeling.

\monitem Notice that the only physical field is $T= \sum_j T_j$ and not $T_1$
(with $N=1$) as used in our mathematical study. As a consequence, 
the widening of antigenicity support
is a phenomenon not included in the case $N=1$ nor in any other
well-known ``macroscopic'' model reviewed above. Since the
``microscopic'' models use a finite domain of antigenicity, they
no more include the widening of antigenicity support. As 
de Boer and Perelson conclude their study of numerous macroscopic models
in \cite{Boer_Perelson}: ``one may model
disease progression by allowing the virus to evolve immune-escape
variants increasing the diversity of the quasi-species [...]. Since this requires
high-dimensional models, this form of
disease progression is not considered any further here''.
It would of course be of interest to get an experimental
illustration of the viral dynamics as exposed in the present study. As
described in \cite{Derrien}, nucleotide analogs can be selected to
take control over the mutational drift of the virus, in particular abolishing ($N=1$), or
reducing (one may even reach precisely $N=2$) the
drift. Starting with a $N=1$ experiment (no drift), $T$, $U$ can be
counted by flow cytometry, for instance using a dye adsorbed by life
cells ($T_1$), not by dead ones ($U_1$). The virus populations $V_1$ and $W_1$ can be counted by
the capacity to infect ($V_1$) or not infect ($W_1$) $T_1$ cells. The latter
will be labeled (for instance fluorescence-tagged antibody grown
against $T_1$).  The same counting would be repeated in a $N=2$
experiment, in which the fluorescent label of $T_1$ will allow to count
$T_2$, etc... An entirely different strategy would be to look for a
macroscopic version of our model. Such a model would depend on $T, U,
V, W$ instead of $(T_i, U_i, V_i, W_i)_{i=1, ..., N}$, but
remains to be determined.

\bigskip  \smallskip  \noindent {\bf \Large 6) \quad Conclusion }   
    
\smallskip
We have thoroughly studied previous models of HIV multiplication by systems of
differential equations. Some of them were reduced to
be single-antigenic. With such a reduction, all of these models have fixed
points that prohibit modeling of the last phase of the disease where
the T count vanishes. This is also criticized in recent research \cite{Pastore_Zubelli}.

Moreover, we propose a model taking into
account new phenomena among which lymphocytes generation by the immune
system according to the presence of specific viruses, and immune
effect against each virus strain. We also model infection and
mutation/generation through new algebraic terms. This model is derived
due to explicit arguments. It will be tested further in forthcoming
research.

Although the {\em reduced} version of our model has the same drawback of not
enabling the immunity exhaustion, its general version takes into
account the strains' diversity (here denoted as antigenicity) and the
specificity of the immune response. So our full model should enable to
account for the last phase of the HIV infection where the lymphocytes'
count vanishes. This will be studied in a forthcoming article.

\bigskip    \noindent {\bf \large Acknowledgment}

The authors want to thank Laurent Desvillettes and Beno\^{\i}t Perthame for a fruitful
discussion in June 2008 and February 2009 respectively.

\bigskip    
\noindent {\bf \large  References } 

\vspace{-.3cm} 
\bibliographystyle{plain}
\bibliography{biblio}

\end{document}